\newtheorem{theorem}{Theorem}
\newtheorem{lemma}[theorem]{Lemma}
\newtheorem{corollary}[theorem]{Corollary}
\newtheorem{proposition}[theorem]{Proposition}
\theoremstyle{definition}
\theoremstyle{remark}
\numberwithin{equation}{section}
\newcommand\RR{{\mathbb R}}
\newcommand{\vect}[2]{\left( \begin{array}{l} #1 \\ #2 \end{array} \right)}
\def\W2{W^{1,2}({\cal O}(M))}
\newcommand\one{{\mathbf 1}}
\newcommand{\stb}{\mbox{\small $\frac{2}{\sqrt{\beta}}$}}
\newcommand{\tr}{\mathrm{tr}}
\newcommand{\xbolt}[0]{x^{\lightning}}
\newcommand{\ybolt}[0]{y^{\lightning}}
\newcommand{\onex}[0]{x^{(1)}}
\newcommand{\oney}[0]{ y^{(1)}}
\title{Universality of  the Stochastic Bessel Operator} 
\author{
  {Brian Rider}\footnote{Department of Mathematics, Temple University. e-mail: {\tt{brian.rider@temple.edu.}}},  
  { Patrick Waters}\footnote{Department of Mathematics, Temple University. e-mail: {\tt{patrick.waters@temple.edu.}}}
  }
\date{} 
\begin{document}
\maketitle

\begin{abstract} We establish universality at the hard edge for general beta ensembles provided that the background potential $V$ is  a polynomial such that  $x \mapsto V(x^2)$ is uniformly convex and $\beta \ge 1$. The method rests on the corresponding tridiagonal matrix models, showing that their appropriate continuum scaling limit is given by the Stochastic Bessel Operator. As conjectured in \cite{ES} and rigorously established in \cite{RR}, the latter characterizes the hard edge in the case of linear potential 
and all $\beta$ (the classical ``beta-Laguerre" ensembles).

\end{abstract}


\section{Introduction}

We prove a universality result for the limiting distribution of the smallest points for a family of coulomb gas measures.
With any  $\beta >0$ and $a > -1$ these measures are prescribed through the joint densities of $n$ points $\{ \lambda_1, \dots , \lambda_n \}$ on the positive half-line:
\begin{equation}
\label{eigdensity}
   c \prod_{i \neq j} | \lambda_i - \lambda_j|^{\beta}  \prod_{i=1}^n w (\lambda_i), \quad w(\lambda)  = \lambda^{\frac{\beta}{2} (a+1) -1} e^{- {\beta} n  V(\lambda)}.
\end{equation}
In general, $V$ can be any function that is bounded at zero and of suitable growth at infinity; the constant $c = c(V, \beta, a,n)$ is the corresponding normalizer. The particular choice of the weight $w$ is explained by the fact that when $\beta=1,2,4$, $V(x) =x/2$, and $a$ is an integer, \eqref{eigdensity} is precisely the joint density of eigenvalues for the classical Wishart (or Laguerre) ensembles of random matrix theory. These are ensembles of the form $X X^{\dagger}$ for  an $n \times (n+a)$ matrix $X$  of independent real, complex, or quaternion (at $\beta =1, 2,$ or $4$)  mean-zero Gaussians, here normalized to have mean-square $(n \beta)^{-1}$. 

The scaling limit for the smallest points in this and related contexts is now commonly referred to as the hard-edge limit. In the solvable case of complex Gaussian entires ($\beta =2$ and $a=0,1,2, \dots$) a closed form for these distributions was discovered by Tracy and Widom \cite{TW1}, with results for the real and quaternion cases following in \cite{TW2}. Keeping with the Gaussian-type potential ($V(x) =x/2$), but now allowing all possible values of $\beta$ and $a$, the densities \eqref{eigdensity} define the extensively studied ``beta-Laguerre" ensembles. Based on a corresponding tri-diagonal matrix 
model of Dumitriu and Edelman \cite{DE}, Edelman and Sutton \cite{ES} conjectured that the limiting  beta-Laguerre hard edge 
should be described by a certain random differential equation which they tagged the Stochastic Bessel Operator. This was subsequently proved in \cite{RR1}.

The Stochastic Bessel Operator (${\mathrm{SBO_{\beta,a}}}$) takes the form:
\begin{equation}
\label{SB1}
      {\mathrm{SBO_{\beta,a}}} 
      =  - \, e^{(a+1) x + \stb b(x)} \frac{d}{dx}  e^{ -a x - \stb b(x)}  \frac{d}{dx}, 
\end{equation}
where $x \mapsto b(x)$ is a standard Brownian motion. For the present application, this is viewed as acting on 
$L^2( \RR_+, m(dx) )$ for  $m(dx) = e^{-(a+1) x - \stb b(x)} dx)$ with a  Dirichlet boundary condition at the origin. 
Viewed as a random diffusion generator symmetric with respect to $m(dx)$, one sees that ${\mathrm{SBO_{\beta,a}}}$ has almost surely discrete spectrum \cite{RR1}. 

Here we show that ${\mathrm{SBO_{\beta,a}}}$ is a universal object, characterizing the hard-edge scaling limit for $\beta$-ensembles \eqref{eigdensity} with a certain class of polynomial potentials.

\begin{theorem} 
\label{thm:chief}
Consider the  ordered points $0 < \lambda_1 < \lambda_2 < \cdots $ drawn from the $\beta$-ensemble  \eqref{eigdensity} where $V$ is a polynomial such that $x \mapsto V(x^2)$ is uniformly convex and $\beta \ge 1$. 
Denote by $0 < \Lambda_1 < \Lambda_2 < \cdots $ the ordered eigenvalues of  ${\mathrm{SBO_{\beta,a}}}$. Then, 
there is a constant $c_{V, \beta,a}$ such that: as $n \rightarrow \infty$,
\begin{equation}
\label{distscale}
    c_{V, \beta, a} n^2 ( \lambda_1, \lambda_2, \dots)  \Rightarrow (\Lambda_1,  \Lambda_2, \dots ),  
\end{equation}
in the sense of finite dimensional distributions.
\end{theorem}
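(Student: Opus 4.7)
The plan is to realize \eqref{eigdensity} as the eigenvalue distribution of a random tridiagonal matrix, and then verify by operator convergence that its rescaling at the lower edge converges to $\mathrm{SBO}_{\beta,a}$. The framework is that developed in \cite{RR1} for the reference case of linear $V$. The first step is the construction of the matrix model. In the beta-Laguerre case, Dumitriu and Edelman exhibit a bidiagonal $L$ with independent chi-distributed entries such that $L L^{\mathsf T}$ has eigenvalues distributed as \eqref{eigdensity}. For general polynomial $V$ the same change of variables to a bidiagonal $L$ still applies, but the resulting joint density on the entries picks up an extra factor $\exp(-\beta n\, \tr V(LL^{\mathsf T}))$ that produces weak dependence among them. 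The hypothesis that $x \mapsto V(x^2)$ is uniformly convex makes this joint density log-concave on the positive cone of bidiagonal matrices, and $\beta \ge 1$ keeps the chi-type marginals log-concave.

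Next comes the potential-theoretic input and scaling. The measure \eqref{eigdensity} admits an equilibrium measure $\mu_V$ with compact support whose density behaves like $c\, x^{-1/2}$ near $0$ under the uniform convexity of $V(x^2)$. This square-root singularity is the signature of the hard edge and fixes the $n^2$ rate in \eqref{distscale}. Decomposing the entries of $L$ as $L_{k,k} = \mu_k + \xi_k$ and $L_{k,k+1} = \nu_k + \eta_k$ with deterministic $\mu_k,\nu_k$ read off from $\mu_V$, the central task is a functional invariance principle for the centered fluctuations: after reindexing by $x = k/n$, partial sums of the $\xi_j, \eta_j$ should converge to a scalar multiple of $b(x)$, with variance matching the appearance of $\stb$ in \eqref{SB1}. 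Granted this, the bidiagonal $L$ converges to a first-order differential operator whose symmetric square $L^{\mathsf T} L$ is precisely $\mathrm{SBO}_{\beta,a}$.

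With the candidate limit identified, joint convergence of the bottom $k$ eigenvalues follows by the resolvent technique of \cite{RR1}: tightness of the inverses in Hilbert--Schmidt norm on the weighted space $L^2(\RR_+, m(dx))$, a lower bound preventing spectral mass from escaping to the origin, and a test-function argument identifying the limit of the resolvent with that of $\mathrm{SBO}_{\beta,a}$. This gives the stated convergence in finite-dimensional distribution.

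The principal obstacle is the functional invariance principle in the presence of the non-local interaction $\exp(-\beta n\, \tr V(LL^{\mathsf T}))$. For linear $V$ the entries of $L$ are exactly independent chi variables and the invariance principle is classical; here they are only weakly dependent. Uniform convexity of $V(x^2)$ is what provides the log-concavity needed for Brascamp--Lieb and transportation-type inequalities that control cross-correlations of the $\xi_j, \eta_j$, while $\beta \ge 1$ keeps the one-dimensional marginals log-concave so that standard coupling and comparison arguments can be brought to bear. Once these concentration estimates are in place the problem reduces to an asymptotically independent Gaussian increment sequence, and Donsker's theorem combined with the resolvent analysis of \cite{RR1} should go through essentially verbatim.
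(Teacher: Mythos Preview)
Your outline is broadly aligned with the paper's strategy: a bidiagonal matrix model realizing \eqref{eigdensity}, log-concavity from the uniform convexity of $V(x^2)$ together with $\beta\ge 1$, a functional CLT for the entry fluctuations, and operator convergence to the Bessel limit. At that level of description the two proofs coincide.

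There are, however, two places where your sketch diverges from what the paper actually does, and one place where it understates the work required.

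\textbf{Working with the inverse.} You frame the limit as ``$L$ converges to a first-order differential operator whose symmetric square is $\mathrm{SBO}_{\beta,a}$'' and then invoke a resolvent argument. The paper instead works directly with the \emph{inverse}: it embeds $[nB]^{-1}$ as an explicit kernel operator $K_n$ on $L^2[0,1]$ (see \eqref{discreteK}) and proves Hilbert--Schmidt convergence $K_n\to K$ to the integral operator \eqref{newkernel}. This is more than a cosmetic choice --- the inverse of a bidiagonal matrix has a closed product form, and the random products $\prod Y_k/X_k$ are exactly what the functional CLT is applied to. The identification with $\mathrm{SBO}_{\beta,a}$ then comes \emph{after} the limit, via the deterministic change of variables \eqref{var_change}, which also produces the constant $c_{V,\beta,a}=4\kappa^2$.

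\textbf{The centering.} You propose to read off the deterministic profile $\mu_k,\nu_k$ from the equilibrium measure $\mu_V$. The paper shows this is not sharp enough. The leading-order profile (the ``coarse minimizer'' $\phi$) is indeed linked to a family of equilibrium measures (Remark~2), but to identify the limiting mean \eqref{mean_convergence} one needs the minimizer of the full Hamiltonian $H$ to accuracy $o(n^{-1})$. The paper constructs by hand a ``fine minimizer'' $(\xbolt,\ybolt)$ (see \eqref{finemin2}--\eqref{finemin1}) and verifies via lattice-path combinatorics that $\nabla H(\xbolt,\ybolt)$ vanishes to the required order. This is where the specific functions $\phi$ and $\theta$, and hence the scaling constant, actually come from; a generic equilibrium-measure heuristic will not produce them.

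\textbf{The ``essentially verbatim'' claim.} Your final paragraph asserts that once log-concavity gives concentration, Donsker plus \cite{RR1} finishes the job. The paper explicitly explains why this is optimistic (see the subsection ``Comparison with the soft edge''). Unlike the linear-$V$ case or the soft edge, here one must resolve order-$n$ many variables rather than $O(n^{1/3})$, and the entries are genuinely dependent. The decorrelation is obtained not from transportation inequalities alone but from a deterministic estimate (Proposition~\ref{thm:BCdecay}) on how conditional minimizers of $H$ forget their boundary data exponentially fast; this feeds a blocking argument for the CLT (Section~\ref{sec:CLT}) rather than a direct appeal to Donsker. Separately, controlling $K_n$ in Hilbert--Schmidt norm near the singularity $s\downarrow 0$ (Section~\ref{sec:Norm}) requires its own set of estimates that have no counterpart in \cite{RR1}. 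None of this is out of reach given your setup, but it is substantial new work, not a verbatim transplant.
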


The restriction to $\beta \ge 1$ is an additional convexity assumption, as we will explain below. Our proof builds on the method in which the ${\mathrm{SBO_{\beta,a}}}$ limit was originally established in the simpler $\beta$-Laguerre setting. 
We identify a family of tridiagonal matrix models which realize \eqref{eigdensity} as their eigenvalue densities, and then demonstrate that ${\mathrm{SBO_{\beta,a}}}$ serves as their appropriate continuum operator limit. Along the way we will see that hard-edge universality is a consequence of a certain functional central limit theorem. The whole program is similar
to the recent soft-edge universality proof using the characterizing Stochastic Airy Operator \cite{KRV}, as we will also explain in greater detail below.

Hard edge universality has previously been addressed at $\beta =1, 2, 4$  via the Riemann-Hilbert Problem method: for $\beta=2$ quite general potentials $V$ are treated in \cite{KV2}, while for $\beta=1$ and $4$ reference \cite{DGKV} considers potentials that are asymptotically monomial. At these  values of $\beta$ the laws \eqref{eigdensity} 
correspond to eigenvalue densities for nonnegative definite matrices $M$ drawn according to the law with density proportion to $(\det M)^{\gamma} e^{-\tr V(M)} d M$ (for choice of $\gamma > -1$). There are also further special values of $\beta$ (outside of $1$, $2$, and $4$) for which the hard-edge of $\beta$-Laguerre can be accessed through multivariate special functions (without appealing to ${\mathrm{SBO_{\beta,a}}}$), see for example \cite{For}.  At the soft edge, besides again $\beta=1,2,4$ results using the Riemann-Hilbert Problem method \cite{DG} and the operator approach of \cite{KRV}, there are the results of Bekerman-Figalli-Guionnet \cite{BFG} and Bourgade-Erd\"os-Yau \cite{BEY3} which hold for a far more general class of potentials. While these 
methods most likely extend to the hard edge, the emphasis here (as in \cite{KRV}) is to demonstrate a simple mechanism for edge universality of random matrices.

\subsection*{Tridiagonals and operator limits}

Let $B = B(x,y)$ denote the $n \times n$ lower bi-diagonal matrix
\begin{equation}
    B_{i,i} = x_i \mbox{ for }  i =1,\dots n, \quad B_{i+1, i} = - y_i \mbox{ for } i = 1, \dots, n-1,
\end{equation}
with the convention that all $x_i$ and $y_i$ are positive. Build the random $B = B(X, Y)$ with  the variables $(X_1, \dots, X_n, Y_1, \dots, Y_{n-1})$ drawn according to the density,
\begin{equation}
\label{thelaw}
 P(x_1, \dots x_n, y_1, \dots, y_{n-1})  = c \, \exp{ \left[- n \beta \tr V ( B B^T )  \right]}  \prod_{k=1}^{n} x_k^{ \beta(k+ a) - 1}  \prod_{k=1}^{n-1} y_k^{\beta k-1},
\end{equation}
on $(\RR_+)^{2n-1}$. Then, the key fact is that the random tridiagonal $B(X,Y) B(X,Y)^T$ has joint eigenvalue density given by \eqref{eigdensity}. This is the general potential analogue of the Edelman-Dumitriu result \cite{DE}. When $V$ is linear \eqref{thelaw} reduces to their representation of $\beta$-Laguerre: all $X_i$ and $Y_i$ are independent with $X_i \sim \frac{1}{\sqrt{n \beta}} \chi_{\beta (i+a)}$ and $Y_i \sim  \frac{1}{\sqrt{n \beta}} \chi_{\beta i}$ ($\chi_r$ denoting a chi variable of parameter $r >0$).
The proof is much the same as that in \cite{DE}, and for completeness is included in the appendix.

Next we recall (from \cite{RR1}) that ${\mathrm{SBO_{\beta,a}}}$ is best understood through its inverse, which has a similar decomposition 
to the the matrix model $(BB^T)^{-1}$. Mapped to act on $[0,1]$ rather than the half line, this inverse takes the form  
$ \mathrm{K}^{T} \mathrm{K}$ in which $\mathrm{K}$ is the integral operator with kernel\footnote{Throughout we use the same notation for any integral operator and its corresponding kernel.} 
\begin{equation} 
\label{SB2}
     \mathrm{K}(s,t) =  \frac{1}{\sqrt{t}}   \left( \frac{s}{t} \right)^{a/2}  \exp \left[ \int_s^t \frac{db_u}{\sqrt{\beta u}} \right]  \mathrm{1}_{s < t}
\end{equation}
on $L^2[0,1]$. 
The strategy that emerges is to show that, after an embedding into $L^2[0,1]$,  $[ n B]^{-1}$ converges to $K$ in a suitably strong sense.



Now to be completely concrete we specify $V(x) =  \sum_{m=1}^d g_m x^m$, 
and introduce $t \mapsto \phi(t)$ as the unique positive solution of
\begin{equation}
\label{phi_def}
  t = \sum_{m=1}^d m {2m \choose m} g_m \phi(t)^{2m}, \quad  \mbox{ for } t \in [0,1].
\end{equation}
In terms of $\phi$, we also define
\begin{equation}
\label{theta_def}
   \theta(t) = \kappa \left( \int_0^t \frac{du}{\phi(u)} \right)^2, \quad \mbox{ with } \kappa = \kappa_{V, \beta,a} \mbox{ chosen so that } \theta(1) = 1.
\end{equation}
That  $\phi$ has the claimed properties and is such that $\theta$ is finite requires some verification. Granted this however our main technical result is the following.

\begin{theorem} 
\label{thm:main}
Let $x \mapsto V(x^2)$ be a uniformly convex polynomial and take $\beta \ge 1$. 
Denote by $K_n$ the canonical embedding of the random matrices $[ n B(X,Y)]^{-1}$ as operators from $L^2[0,1]$ to itself.
Then,  for any sequence $n \rightarrow \infty$ there is a subsequence $n' \rightarrow \infty$ and a probability space 
on which $K_{n'}$ converges to  the integral operator $K$ with kernel 
\begin{equation}
\label{newkernel}
K(s,t) = 
\frac{1}{\sqrt{ \phi(s) \phi(t)}} \left( \frac{\theta(s)}{\theta(t)} \right)^{\frac{a}{2} + \frac{1}{4}}  \exp \left[ \frac{1}{\sqrt{\beta}} \int_{\theta(s)}^{\theta(t)} \frac{db_z}{\sqrt{z}} \right] \mathrm{1}_{s < t},
\end{equation}
almost surely in Hilbert-Schmidt norm.
\end{theorem}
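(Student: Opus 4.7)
Since $B(X,Y)$ is lower bidiagonal, $B^{-1}$ is lower triangular with explicit entries
\[
(B^{-1})_{ij} = \frac{1}{x_j}\prod_{k=j}^{i-1}\frac{y_k}{x_{k+1}}, \qquad i \ge j,
\]
and (with the natural convention) the step-function embedding $K_n$ of $[nB]^{-1}$ is supported on $\{s\le t\}$ and essentially equal to this under the identifications $s \mapsto j/n$, $t \mapsto i/n$. The plan is to show that the product factors into a deterministic LLN piece responsible for the prefactor $(\phi(s)\phi(t))^{-1/2}(\theta(s)/\theta(t))^{a/2+1/4}$ and a centered CLT piece responsible for the Brownian exponential, in a uniform enough sense that almost sure pointwise convergence can be upgraded to Hilbert-Schmidt convergence along a subsequence on a Skorohod-coupled probability space.

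\textbf{LLN and functional CLT.} The hypothesis that $V(x^2)$ is uniformly convex, combined with $\beta \ge 1$, renders the joint density \eqref{thelaw} uniformly log-concave in $(x,y)$ after the near-origin logarithmic weights are dealt with by a barrier argument. The formal continuum free energy associated with \eqref{thelaw} has its unique minimizer characterized by the Euler-Lagrange equation \eqref{phi_def}, and one shows $x_k, y_k \to \phi(k/n)$ in probability via Brascamp-Lieb / Bakry-\'Emery concentration. Carefully tracking $\EE \log x_j$ and $\EE \log(y_k/x_{k+1})$ (where the $a$-shift in the $x_k^{\beta(k+a)-1}$ weight produces a logarithmic boundary correction) then yields exactly the deterministic prefactor $(\phi(s)\phi(t))^{-1/2}(\theta(s)/\theta(t))^{a/2+1/4}$. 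For the fluctuations, set $\xi_k = \log y_k - \log x_{k+1}$ and study the polygonal partial sum $S_n(t) = \sum_{k \le nt}(\xi_k - \EE\xi_k)$. Uniform log-concavity supplies a spectral gap, and the Helffer-Sj\"ostrand covariance representation together with standard tightness arguments deliver
\[
S_n(t) - S_n(s) \Rightarrow \frac{1}{\sqrt{\beta}} \int_{\theta(s)}^{\theta(t)} \frac{db_z}{\sqrt z}, \qquad 0 < s < t \le 1,
\]
where the time change $\theta$ is forced by matching the increment variance $\sum_{j < k \le i}\mathrm{Var}(\xi_k)$ with $\beta^{-1}\log(\theta(t)/\theta(s))$; the resulting differential relation reads out \eqref{theta_def}.

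\textbf{Hilbert-Schmidt upgrade and main obstacle.} Once pointwise $K_{n'}(s,t) \to K(s,t)$ holds almost surely on a Skorohod coupling, Hilbert-Schmidt convergence follows from uniform $L^{2+\delta}$ integrability of $K_n$ on $[0,1]^2$. I would secure this through log-concave moment bounds: lower bounds on the $x_k$'s via the repulsive weights, and $(2+\delta)$-th moment bounds on $\prod_{k=j}^{i-1}(y_k/x_{k+1})$ via Brascamp-Lieb combined with a submartingale-type estimate in the index $k$; Vitali's theorem then closes the argument. The central obstacle is the functional CLT itself under the non-product measure \eqref{thelaw}. In the $\beta$-Laguerre case the entries decouple into independent chi variables and the CLT is classical, but for general $V$ the trace $\mathrm{tr}\,V(BB^T)$ couples the coordinates nontrivially, and a careful perturbation analysis around the deterministic profile $\phi$ is required to extract both the correct variance scaling (which manufactures $\theta$) and the asymptotic near-independence of increments on disjoint intervals. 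It is precisely here that the joint convexity provided by $V(x^2)$ uniformly convex together with $\beta \ge 1$ is indispensable.
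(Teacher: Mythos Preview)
Your overall architecture (deterministic centering via the minimizer of the Hamiltonian, functional CLT for the centered logarithmic increments, then upgrade to Hilbert--Schmidt) is the same as the paper's. The paper carries out the CLT via a classical blocking argument that exploits the Markov-field property coming from the polynomial nature of $V$ (distinct stretches of variables decouple once an intervening block of length $d$ is frozen), rather than via Helffer--Sj\"ostrand; either route is reasonable, though the blocking approach makes the asymptotic independence of increments completely explicit. The centering itself is more delicate than your sketch suggests: to obtain the exact prefactor $(\phi(s)\phi(t))^{-1/2}(\theta(s)/\theta(t))^{a/2+1/4}$ one needs the minimizer of $H$ to $o(n^{-1})$ accuracy, and the paper builds an explicit ``fine minimizer'' $(\xbolt,\ybolt)$ for this purpose and shows its proximity to the true minimizer via a discrete Gronwall-type decay of boundary influence.

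There is, however, a genuine gap in your Hilbert--Schmidt step. You propose to secure almost-sure HS convergence via $L^{2+\delta}$ moment bounds and Vitali's theorem. But the exponential factor in the limiting kernel is $\exp(G_{s,t})$ with $G_{s,t}$ Gaussian of variance $\beta^{-1}\log(\theta(t)/\theta(s))$, so
\[
E\bigl[|K(s,t)|^{p}\bigr] \;=\; \frac{1}{(\phi(s)\phi(t))^{p/2}}\left(\frac{\theta(s)}{\theta(t)}\right)^{(a/2+1/4)p}\left(\frac{\theta(t)}{\theta(s)}\right)^{p^{2}/(2\beta)}.
\]
Near $s=0$ this behaves like $s^{(a/2)p-p^{2}/(2\beta)}$, and already at $p=2$ integrability in $s$ requires $a>2/\beta-1$, which for $\beta=1$ means $a>1$. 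Since the theorem covers the full range $a>-1$, the expectation $E\int\!\!\int |K|^{2}$ is in general infinite, and a fortiori no uniform $(2+\delta)$-moment bound can hold for $K_n$. The paper avoids expectations entirely at this stage: it proves a pathwise law-of-the-iterated-logarithm type bound (Lemma~\ref{LILtype}) showing that the random sums $\sum_{j=k}^{n}\log\bigl((Y_j/y_j^o)/(X_j/x_j^o)\bigr)$ are dominated by a tight constant times $(1+\log\theta(k/n)^{-1})^{p}$ for some $p<1$. This yields a deterministic dominating kernel $K_\epsilon$ (times a tight prefactor) which lies in $L^{2}([0,1]^{2})$ for $\epsilon<\tfrac14\wedge\tfrac{a+1}{2}$, valid for $s,t\ge c\log n/n$. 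The strip $s\le c\log n/n$ is handled separately by a direct (but rather involved) estimate showing its contribution to $\|K_n\|_{HS}^{2}$ vanishes in probability. Your submartingale/Brascamp--Lieb moment route would need to be replaced by something pathwise of this kind.
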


One observes  that when $V(x) = x/2$, the definitions \eqref{phi_def} and \eqref{theta_def} yield $\phi(t) = \sqrt{t}$ and $\theta(t) = t$, 
and \eqref{newkernel}  reduces to the advertised kernel $\mathrm{K}$  in \eqref{SB2}. In general we have that
\begin{equation}
\label{var_change}
   \mathrm{K}  ( \theta(s), \theta(t) ) \sqrt{ \theta^{\prime}(s) \theta^{\prime}(t)}  =  2 \kappa  K (s, t), 
 \end{equation}
with $\kappa$ as in \eqref{theta_def}. In other words,  the eigenvalues of $K^T K$ and  $\mathrm{K}^T \mathrm{K}$, defined with the same Brownian motion, agree up to an overall  multiple of $4 \kappa^2$, identifying the $(V, \beta, a)$-dependent scaling constant $c_{V, \beta, a} = 4 \kappa^2 $ in \eqref{distscale}. Here we are using, as is implicit the statement of Theorem \ref{thm:main}, that $K$ and $\mathrm{K}$ are almost-surely Hilbert-Schmidt. To conclude Theorem \ref{thm:chief} is more or less immediate. In the subsequential coupling of Theorem \ref{thm:main} one has $K_n^{T} K_n^{} \rightarrow K K^T$ in trace norm.  Hence, the finite parts of the spectrum of $( n B B^{T})^{-1}$  converge to those of $K^{T} K^{}$ in this manner and so also in distribution. In particular we have as well the convergence in distribution of (any fixed number) of the eigenfunctions as elements of $L^2[0,1]$.

\bigskip

\noindent
{\bf{Remark 1.}}
Rather than embedding $[ n B]^{-1}$ according to the ``flat" basis   $e_k = \sqrt{n} \mathrm{1}_{[(k-1)/n, k/n)}$ and performing the change of  variables \eqref{var_change} after the fact, we could work instead with the suitably weighted $ \mathrm{1}_{[ \theta(k-1/n), \theta(k/n))}$ basis functions to define the embedding. Then, after scaling by $4 \kappa^2$, the corresponding discrete kernels will converge to $\mathrm{K}$ itself.

\bigskip
\noindent
{\bf{Remark 2.}}
The introduced function $\phi$ turns out to provide a first order approximation for the appropriate energy function (Hamiltonian) associated with $P$. It can also be described through
a ``time-dependent" version of the equilibrium measure for the eigenvalue law \eqref{eigdensity}. In particular, let $V_t  = t^{-1} V$ for $t \in (0,1]$ and consider
$$
   \mu_t = \mbox{argmin}_{\mu \in M} \int_0^{\infty}  V_t(x)  \mu(dx) - \int_0^{\infty}  \int_0^{\infty} \log | x - y | \mu(dx) \mu(dy),
$$
where $M$ is the space of probability measures of the half-line. It is the case that $\mu_t$ has support $[0, \phi(t)]$.

\subsection*{Overview of the proof}

Using the explicit inversion formula for bidiagonal matrices, the basic object of study is now understood to be the random kernel operator
\begin{equation}
\label{discreteK}
 K_n ( s, t) = \frac{1}{X_{j}} \prod_{k= i }^{ j -1 } \frac{Y_k}{X_k} \,  \mathrm{1}_{\Gamma_ {i j}}(s,t). 
\end{equation}
Here
    $\Gamma_{ij}$ is the set on which  $s \in \left[  \frac{i-1}{n} , \frac{i}{n} \right)$,
    $  t \in \left[ \frac{j-1}{n},  \frac{j}{n} \right)$, and $s < t$, 
when $i=j$ the product in \eqref{discreteK} is understood to equal one. Given this expression, 
that $\mathrm{spec} ( [n B B^T]^{-1})$  $= \mathrm{spec} (K_n^T K_n^{})$ can be checked by hand.

 The measure $P$ under which $K_n$ is drawn has the form $\frac{1}{Z} e^{-n \beta H} dx dy$ with Hamiltonian 
\begin{equation}
\label{theH}
H(x,y) = \tr \,  V{\left( B B^T \right)} - \sum_{k=1}^n  \left( \frac{k}{n} + \frac{a}{n} - \frac{1}{n \beta} \right) \log x_k 
  - \sum_{k=1}^{n-1} \left( \frac{k}{n} - \frac{1}{n \beta} \right) \log y_k.
\end{equation}
Our assumptions imply that $P$ is uniformly log-concave, that is $ (\nabla^2  H)(x,y) \ge c I$ for some $c >0$ and all $(x,y) \in \RR_+^{2n-1}$. In  particular, with $\beta \ge 1$ each of the log terms in \eqref{theH} has nonnegative second derivative.
One then concludes by noting that,
$$
 \tr V( B B^T) = \frac{1}{2} \,  \tr  V( A^2), \quad \mbox{ for } A =  \left( \begin{array}{cc} 0 & B \\ B^T & 0 \end{array} \right).
$$
and applying C.~Davis' theorem \cite{CD}: a (uniformly) convex function of a Hermitian matrix is (uniformly) convex as a function of its entries. Since \eqref{discreteK} is a simple functional of the process $k \mapsto (X_k, Y_k)$, one is left to quantify the anticipated Gaussian fluctuations of $(X_k, Y_k)$ about the minimizer $(x_k^o, y_k^o)$ of the Hamiltonian $H$.

In Section \ref{sec:Min} we develop a fine (out to $o(n^{-1})$) approximation of the minimizer which allows us to establish the correct centering:
\begin{align}
\label{mean_convergence}
  \lim_{n \rightarrow \infty} 
  \sum_{k= \lfloor n s \rfloor}^{\lfloor nt \rfloor} \log \frac{x_k^o}{y_k^o} 
   =  &  - \left( \frac{a}{2} + \frac{1}{4} \right) \log \frac{\theta(t)}{\theta(s)} +  \frac{1}{2} \log \frac{\phi(t)}{\phi(s)}, 
\end{align}
for all fixed $s, t$ with  $0 < s < t < 1$.   Granted this, the limiting kernel \eqref{newkernel} is identified by showing that
\begin{equation}
\label{path_convergence}
   X_{\lfloor nt \rfloor} \Rightarrow \phi(t), \quad   \quad
   \sum_{k= \lfloor n t \rfloor}^n \log \frac{X_k/x_k^o}{Y_k/y_k^o} \Rightarrow \frac{1}{\sqrt{\beta}} \int_{\theta(t)}^1 \frac{d b_u}{\sqrt{u}}
\end{equation}
in the Skorohod topology on $(0,1]$. Here the polynomial assumption on $V$ is important as it gives $P$ a Markov field property: for example, $(X_i, Y_i)$ and $(X_j, Y_j)$ with $|i - j| > d$ are conditionally independent 
given any intervening block of variables of length $d$.  The implied decorrelation is quantified in a deterministic way,  by showing a decay of dependence of the minimizers of conditional versions of the Hamiltonian $H$ with respect to boundary conditions. These estimates also appear in Section \ref{sec:Min}. Section \ref{sec:Gauss} builds up further properties of the measure $P$, in particular demonstrating Gaussian concentration about the minimizer in terms of tail properties as well as Gaussian approximation of the expectation of various test functions. Given all this, the proof of \eqref{path_convergence} appears in Section \ref{sec:CLT}.

Together \eqref{mean_convergence} and \eqref{path_convergence} provide point-wise convergence (in law) of 
$K_n$ to $K$. To prove that  $ \int_0^1 \int_0^1 | K_n - K|^2  \rightarrow 0$ (over subsequences) as claimed in Theorem \ref{thm:main} requires a certain domination of $K_n$ by a tight family of  $L^2([0,1]^2)$ kernels. This is carried out in Section  \ref{sec:Norm}.  

\subsection*{Comparison with the soft edge}

Before getting on with it, we have a few comments on the technical differences between the present result and the program carried out in \cite{KRV} for the soft edge. While the Stochastic Airy Operator is a more delicate object than our integral operator $K$, being a differential operator and so ``local", to understand the underlying operator convergence of the tridiagonal models one only requires fine information on the first $O(n^{1/3})$ entries of those matrices. For the hard edge we have in a sense to resolve $O(n)$ of $n$ variables. This requires a much more elaborate estimate on the minimizers of $H$, as well as a better handle on the decorrelation between separate stretches of variables under $P$. While our functional central limit theorem in Section \ref{sec:CLT} follows a standard blocking strategy, the corresponding calculation in \cite{RRV} is really a ``one-block" estimate. This also in part explains why the allied method in \cite{KRV} works for all $\beta > 0$. For us the issue is near ``the singularity", or for $X$ and $Y$ of small index where the measure $P$ becomes less coercive. Or, said another way: where, when $\beta < 1$, the Hamiltonian fails to be convex.  While the same issue appears in the soft edge, the troublesome indices are beyond the $O(n^{1/3})$ cutoff and one can get by with rather rough estimates on that part of the field. Again, for the present calculation we simply need more precise control of these entries (as evidenced in particular in the operator norm estimates of Section  \ref{sec:Norm}). Assuming $\beta \ge 1$ covers the classical cases, and makes an already technical paper a little less so.

\section{Minimizers}
\label{sec:Min}

As indicated above, the function $\phi$ introduced in \eqref{phi_def} serves as a first order approximation to the minimizer $(x^o, y^o)$ of the Hamiltonian $H$. The idea is to focus about a fixed index $k$ at a continuum position $t = k/n$, ignoring the additional 
 $(a/n-1/n \beta)$ and $(-1/n\beta)$  multipliers of the $\log x_k$ and $\log y_k$ terms in \eqref{theH}. Then from the potential
we keep only those terms in $\tr \, V(M M^T)$  which involve $(x_k, y_k)$. Assuming that  
the minimizer is locally constant ({\em{i.e.}}, we posit all $x_\ell$ and $y_\ell$ for $|\ell-k | \le d$ equal some $x$ and $y$), we arrive at the following ``coarse Hamiltonian" (at fixed $t \in [0,1]$):
\begin{equation}
\label{coarseH}
H_t(x,y) = \sum_{m=1}^d  g_m \sum_{\ell=0}^m {m \choose \ell}^2 x^{2 \ell} y^{2m-2 \ell} - t \log x - t \log y. 
\end{equation}
Plainly $H_t$ is symmetric in $x $ and $y$, its common ``coarse minimizer"  $x(t)=y(t)$ defining $\phi(t)$. Note that the relation
\eqref{phi_def} is simply the equation for a critical point of $H_t$.

A similar approximation was employed in \cite{KRV} where the analogous objects are referred to as the
local Hamiltonian and corresponding local minimizer.  Here though we require a sharper approximation. In particular, 
to pin down the limiting mean of the log potential, recall \eqref{mean_convergence}, one must refine $(x^o, y^o)$ to 
$o(n^{-1})$ errors.

\bigskip

\noindent
{\em{Definition}: } In terms of $\phi$ define the functions $t \mapsto \onex(t)$ and  $t \mapsto \oney(t)$ via:
\begin{align}
\label{finemin2}
\onex(t) - \oney(t) =& \left(a+\frac{1}{2}\right) \left(\int_{0}^t \frac{du}{\phi(u)}\right)^{-1} - \frac{\phi'(t)}{2}, 
 \nonumber \\
\onex(t) + \oney(t) =& \left( a - \frac{2}{ \beta} \right)\phi'(t). 
\end{align}
Then,  for $i \in [1, n]$, set 
\begin{equation}
\label{finemin1}
\xbolt_i = \phi(i/n)+\frac{\onex (i/n) }{n}, \quad \quad
\ybolt_i = \phi(i/n)+\frac{\oney (i/n)}{n}.
\end{equation}
We will refer to $(\xbolt, \ybolt)$ as the ``fine minimizer".

\bigskip

Proposition \ref{prop:FineMin} proved below in this section shows that for bulk indices  $| x_i^o - \xbolt_i |$ and $|y_i^o - \ybolt_i|$ are in fact $O(n^{-2})$, from which the desired appraisal \eqref{mean_convergence} follows.

The identification of $(\xbolt, \ybolt)$  relies on uniform 
convexity in an essential way.  Indeed, a characterization equivalent to
$\mbox{Hess} \, H(x,y) \ge c I$ for all $(x,y) \in \RR_{+}^{2n-1}$ is that
\begin{equation}
    c \| (x,y) - (x', y') \|_2^2 \le   \Big\langle  \nabla H(x,y) - \nabla H(x', y') , \, (x,y) - (x', y') \Big\rangle,
\label{ConvexGift1}
\end{equation}
for all $(x,y)$ and $(x', y')$. Putting $(x', y') = (x^o, y^o)$, the undetermined true minimizer, and applying the  
Cauchy-Schwartz inequality \eqref{ConvexGift1} implies that
\begin{equation}
\label{ConvexGift2} 
\| (x,y) - (x^o, y^o) \|_2 \le \frac{1}{c} \| \nabla H(x,y) \|_2.
\end{equation}
The point is that the fine minimizer $(\xbolt, \ybolt)$ has been engineered so that  $\nabla H(\xbolt, \ybolt)$ vanishes to sufficiently high order.

\subsection{Identifying the fine minimizer}

The goal of this subsection is to establish the following.

\begin{lemma} 
\label{lem:GradHBound}
For any $i \in [1, n-d)$ it holds that
\begin{equation}
\label{eq:GradH}
   \left| \frac{\partial}{ \partial x_i} H(\xbolt, \ybolt) \right| +  \left| \frac{\partial}{\partial y_i} H(\xbolt, \ybolt) \right|
   \le c \frac{1}{\sqrt{n i^3}}
\end{equation}
with a constant $c$ depending on $V, \beta$ and $a$.  For $i \in [n-d, n]$ the right hand side of 
\eqref{eq:GradH} can be replaced by $O(1)$.
\end{lemma}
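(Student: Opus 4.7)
The plan is to Taylor expand each gradient component about the \emph{frozen} configuration in which $x_j = y_j = \phi(t_i)$ for $|j-i|\leq d$, with $t_i := i/n$, and verify that the definition \eqref{finemin2} of $(\onex, \oney)$ is precisely what makes the leading $O(1/n)$ term in the expansion cancel. The partial derivatives $\partial H/\partial x_i$ and $\partial H/\partial y_i$ are local, depending only on the $4d+2$ neighbors $(x_j, y_j)$ with $|j-i|\leq d$, since $\tr V(BB^T)$ reduces to walks of length $\leq d$ on the path graph through vertex $i$. At the frozen configuration the derivative of $\tr V(BB^T)$ matches that of the coarse Hamiltonian $H_{t_i}$ in \eqref{coarseH} up to boundary effects, and the latter vanishes at $x = y = \phi(t_i)$ by the defining equation \eqref{phi_def}. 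The leftover log correction yields $\partial H/\partial x_i\big|_{\mathrm{frozen}} = -(a/n - 1/(n\beta))/\phi(t_i) + O(n^{-2})$, an $O(1/n)$ piece to be absorbed at the next order.

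Introducing displacements $\alpha_j := \xbolt_j - \phi(t_i)$ and $\beta_j := \ybolt_j - \phi(t_i)$, for $|j-i|\leq d$ these expand as
\[
\alpha_j = \tfrac{j-i}{n}\phi'(t_i) + \tfrac{1}{n}\onex(t_i) + O(n^{-2}), \qquad \beta_j = \tfrac{j-i}{n}\phi'(t_i) + \tfrac{1}{n}\oney(t_i) + O(n^{-2}).
\]
The linear-in-$(\alpha,\beta)$ part of $\partial H/\partial x_i$ pulls in the second partials of $\tr V(BB^T)$ at the frozen point. Split the $j$-sum into the antisymmetric factor $(j-i)\phi'/n$ and the symmetric factor $\onex/n$ or $\oney/n$. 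A combinatorial identity on $(BB^T)^m$ reduces the antisymmetric contribution to $\phi'(t_i)$ times the $t$-derivative of the coarse critical-point equation, delivering the $-\phi'(t)/2$ term in \eqref{finemin2}. The symmetric contribution contracts against $\onex(t_i)$ and $\oney(t_i)$ with coefficients $f_{xx}(\phi,\phi) \pm f_{xy}(\phi,\phi)$, where $f$ is the polynomial from \eqref{coarseH}; combined with the log residual of the previous paragraph, this yields a $2\times 2$ linear system for $\onex \pm \oney$ whose unique solution is exactly \eqref{finemin2}. The analogous analysis for $\partial H/\partial y_i$ produces the same system by symmetry, so the entire $O(1/n)$ contribution cancels.

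What remains is the Taylor remainder, bounded by $\alpha_j \alpha_k$-type products times third partials. The third partials of $\tr V(BB^T)$ are uniformly $O(1)$; the dominant contribution comes from $\partial^3(-\tilde t_i\log x_i)/\partial x_i^3 = 2\tilde t_i/\xbolt_i^3$ with $\tilde t_i := i/n + a/n - 1/(n\beta)$. Since $\phi(t) \sim c\sqrt{t}$ near the hard edge (from \eqref{phi_def} with $g_1 > 0$, forced by uniform convexity of $V(x^2)$), this is $O(t_i/\phi(t_i)^3) = O(t_i^{-1/2}) = O(\sqrt{n/i})$. Meanwhile $\onex, \oney = O(t^{-1/2})$ from \eqref{finemin2}, so $\alpha_i^2 = O(\onex(t_i)^2/n^2) = O(1/(ni))$, and the $|j-i|>0$ contributions are no larger. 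Their product gives the advertised $O(1/\sqrt{ni^3})$ bound. For $i \in [n-d, n]$ the window is truncated and the coarse approximation breaks down; instead, boundedness of $\xbolt, \ybolt$ and polynomial growth of $V$ yield each gradient component as $O(1)$ directly. The technical heart of the argument is the cancellation verification in the second paragraph, which hinges on extracting the antisymmetric and symmetric second-partial sums from the walk decomposition of $(BB^T)^m$; once these are in place the match with the numerology $(a+1/2)$, $(a-2/\beta)$, $\phi'/2$ in \eqref{finemin2} is essentially forced by linear algebra and the ODE satisfied by $\phi$.
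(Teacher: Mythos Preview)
Your approach is essentially the same as the paper's: Taylor expand the gradient components about the frozen configuration $x_j=y_j=\phi(t_i)$, observe that the zeroth order vanishes by the definition of $\phi$, argue that the $O(1/n)$ term is killed precisely by the choice \eqref{finemin2} of $(\onex,\oney)$, and bound the quadratic remainder using $\phi\sim c\sqrt t$ near the edge. Your remainder estimate and the handling of $i\in[n-d,n]$ are fine.

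The gap is in the middle paragraph. You assert that the antisymmetric piece ``reduces to $\phi'(t_i)$ times the $t$-derivative of the coarse critical-point equation'' and that the symmetric piece contracts to coefficients $f_{xx}\pm f_{xy}$ of the two-variable coarse Hamiltonian, but neither of these follows immediately. The second partials of $\tr V(BB^T)$ at the frozen point are sums over \emph{all} neighbors $j$ with $|j-i|\le d$, not the two-variable Hessian of $H_t$; collapsing the $j$-sum to the coarse object is itself a combinatorial identity that has to be proved, and the antisymmetric weights $(j-i)$ make this nontrivial. The paper handles exactly this by writing $\partial_{x_i}\tr V(BB^T)$ as a weighted sum over constrained lattice paths, substituting \eqref{shift_fine}, and computing four explicit coefficients
\[
A_m=m\binom{2m}{m},\quad B_m=\tfrac{2m^2-2m+1}{2m-1}A_m,\quad C_m=\tfrac{2m^2-2m}{2m-1}A_m,\quad D_m=-\tfrac{m^2-m}{2m-1}A_m,
\]
via a bijection on cyclic shifts of paths (for $A_m,B_m,C_m$) and a generating-function extraction (for $D_m$, the antisymmetric term). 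Only after these are in hand does the match with \eqref{finemin2} reduce to algebra using the defining relation \eqref{phi_def} for $\phi$. You have correctly identified this as ``the technical heart,'' but you have not supplied it; the structural claims you make in its place are plausible but unproved, and the $D_m$ computation in particular does not drop out of a symmetry argument.
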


Before this however we go back and verify that the coarse minimizer has the various properties claimed above, and also 
provide an estimate on its shape near the singularity which will be needed for the proof of Lemma \ref{lem:GradHBound}.

\begin{lemma} The coarse minimizer $\phi(t)$ is unique, positive and increasing, and satisfies
\label{phi_bounds}
\begin{equation}
\label{eq:phi_bounds}
   c^{-1} \le  \phi(t){t^{-1/2}}, \  \phi'(t) {t}^{1/2} , \ \phi''(t) t^{3/2}  \le c
\end{equation}
for a constant $c$ and all small enough $t > 0$.
\end{lemma}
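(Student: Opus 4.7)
The plan is to study $\phi$ as the inverse of $F(\phi) := \sum_{m=1}^d m \binom{2m}{m} g_m \phi^{2m}$ (the right-hand side of \eqref{phi_def}), and to route every assertion through a single integral representation of $F$ in terms of $W(x) := V(x^2)$. Before anything else, uniform convexity $W''(x) \ge c_0 > 0$ evaluated at $x = 0$ yields $2 g_1 = W''(0) \ge c_0$, so in particular $g_1 > 0$; the same hypothesis forces the leading coefficient $g_d$ to be positive. These two sign conditions determine the behavior of $\phi$ at both ends of its range.

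Next I would derive the representation $F(\phi) = \frac{1}{2\pi} \int_{-2\phi}^{2\phi} \sqrt{4\phi^2 - u^2}\, W''(u)\, du$ by inserting the classical identity $\binom{2m}{m} = \frac{1}{\pi} \int_0^\pi (2\cos\theta)^{2m}\, d\theta$ into the defining series for $F$, interchanging sum with integral (using $\sum_m m g_m u^{2m} = \frac{1}{2} u W'(u)$), integrating by parts once in $\theta$, and finally changing variables $u = 2\phi \cos\theta$. Differentiating under the integral---with boundary terms vanishing because $\sqrt{4\phi^2 - u^2}$ does at $u = \pm 2\phi$---gives $F'(\phi) = \frac{2\phi}{\pi} \int_{-2\phi}^{2\phi} W''(u)/\sqrt{4\phi^2 - u^2}\, du \ge \frac{2 c_0 \phi}{\pi} \cdot \pi = 2 c_0 \phi$ for all $\phi > 0$. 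Combined with $F(0) = 0$ and $F(\phi) \to \infty$ (from $g_d > 0$), this shows $F : [0, \infty) \to [0, \infty)$ is a strictly increasing bijection, so $\phi := F^{-1}$ is uniquely defined on $[0, 1]$, positive on $(0, 1]$, and satisfies $\phi'(t) = 1/F'(\phi(t)) > 0$.

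For the three asymptotic bounds I would expand $W''(u) = 2 g_1 + O(u^2)$ as $u \to 0$ inside the integral representations. This gives $F(\phi) = 2 g_1 \phi^2 + O(\phi^4)$, $F'(\phi) = 4 g_1 \phi + O(\phi^3)$, and $F''(\phi) = 4 g_1 + O(\phi^2)$ as $\phi \to 0^+$. Inverting the first yields $\phi(t) = (2 g_1)^{-1/2}\, t^{1/2}(1 + O(t))$; the identities $\phi'(t) = 1/F'(\phi(t))$ and $\phi''(t) = - F''(\phi(t))\, \phi'(t)^2 / F'(\phi(t))$ then deliver $\phi'(t) \sim (8 g_1 t)^{-1/2}$ and $|\phi''(t)| \sim (32 g_1)^{-1/2}\, t^{-3/2}$. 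Each of $\phi(t)\, t^{-1/2}$, $\phi'(t)\, t^{1/2}$, and $|\phi''(t)|\, t^{3/2}$ thus converges to a finite positive limit as $t \to 0^+$, giving the claimed two-sided bounds for all sufficiently small $t$. The main work is extracting the integral representation of $F$: once it is in place, the hypothesis $W'' \ge c_0$ funnels through each identity to deliver monotonicity, positivity, and the leading-order behavior at the singularity all at once.
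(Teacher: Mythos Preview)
Your proof is correct and close in spirit to the paper's, though you organize the argument a bit differently. Both proofs rest on an integral representation of $F(\phi)=\sum_m m\binom{2m}{m}g_m\phi^{2m}$: the paper writes $F$ as a weighted average of $W'(\phi u)$ over $u\in[0,2]$ and argues monotonicity from the fact that $W'$ is increasing, whereas you integrate by parts once more to express $F$ in terms of $W''$ and obtain the quantitative bound $F'(\phi)\ge 2c_0\phi$ directly from $W''\ge c_0$. Your route has the advantage of delivering uniqueness, positivity, and monotonicity in one stroke via the bijection $F:[0,\infty)\to[0,\infty)$. The paper instead proves uniqueness by a separate convexity argument: it realizes $H_t$ as the restriction to constant configurations of a circulant trace functional and invokes C.~Davis' theorem. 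That argument buys the interpretation of $\phi(t)$ as an honest minimizer of $H_t$ (not merely a critical point), which has some conceptual value in the surrounding narrative but is not strictly needed for the lemma as stated. The asymptotic analysis near $t=0$ is essentially the same in both proofs, driven by $g_1>0$; your explicit computation of the leading constants is more detailed than the paper's terse ``follows suit.'' One small remark: you correctly note $\phi''<0$ near the origin and bound $|\phi''(t)|t^{3/2}$; the two-sided inequality in the lemma should indeed be read with an absolute value on $\phi''$, and only the upper bound $|\phi''(t)|=O(t^{-3/2})$ is used downstream.
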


\begin{proof} Uniqueness can be seen from the following alternative description of $H_t$.  Let $C$ be the circulant version of our bidiagonal matrix $B$ in variables $x_1, \dots x_m$ and $y_1, \dots, y_m $ (with $m > d$) and consider minimizing
\begin{equation}
\label{IntrocoarseH}
  (x,y) \mapsto \tr \, V (C C^T) - t \sum_{k=1}^m (\log x_k + \log y_k).
\end{equation}
By another application of C.~Davis' theorem, this is a convex function. It is also invariant under rotations of the indices, and so its minimizer satisfies $x_k \equiv x$ and $y_k  \equiv y$ for some  $x$ and $y$ and all $k$. But making this substitution one finds that the right hand side of \eqref{IntrocoarseH} equals $m H_t(x,y)$.

Showing that $t \mapsto \phi(t)$ is positive and increasing comes down to  showing  that the right hand side of  \eqref{phi_def}
is increasing as a function of $\phi$. Rewrite that expression as in
\begin{equation}
\label{IntV}
  \sum_{m=1}^d m {2m \choose m} g_m \phi^{2m}
     =  \phi \int_{0}^2 2 u \phi V'( \phi^2 u^2)  \frac{ 4 u du }{\sqrt{ 4- u^2}}.
\end{equation}
Since $x \mapsto V(x^2)$ is uniformly convex, $ x \mapsto 2 x V'(x^2)$ is increasing. The integral on the right hand side of \eqref{IntV} is therefore  a weighted average of such increasing functions, which yields the claim.

As for  \eqref{eq:phi_bounds}, again by uniform convexity we have that $g_1 >0$, and so for small $t$ 
the relation \eqref{phi_def} takes the form $t = 2 g_1 \phi^2 (1+o(1))$. This shows that $\phi(t)$ is bounded above and below by a multiple of $\sqrt{t}$ for small $t$.  The estimates on $\phi'$ and $\phi''$ follow suit by considering successive derivatives of  \eqref{eq:phi_bounds}.
\end{proof}

\begin{proof}[Proof of Lemma \ref{lem:GradHBound}] 
The starting point is a lattice path representation for the diagonal entries of powers of $BB^T$:  for $i \in (d, n-d)$,
\begin{align}
[(BB^T)^m]_{ii} =& \sum_{p \in P_{m}} \prod_{j=1}^m 
\left( \begin{cases} x_{i+p(2j-1)} & \text{ if }p(2j)=p(2j-1) \\ -y_{i+p(2j-1)-1} & \text{ if } p(2j)=p(2j-1)-1 \end{cases} \right) \nonumber  \\
& \hspace{60pt}
\times \left( \begin{cases} x_{i+ p(2j)} & \text{ if }p(2j+1)=p(2j) \\ -y_{i+p(2j)} & \text{ if } p(2j+1)=p(2j)+1 \end{cases} \right).\label{LatticePaths}
\end{align}
Here $P_m$ denotes the collection of random walk paths of length $2m$ beginning and ending at height $0$ and 
constrained as follows. At odd-timed steps $j$ (corresponding to selecting an entry from $B$), the path can either take a step of type $\rightarrow$ and remain at $p(j)$ or take a step of type $\searrow$ and $p(j+1) = p(j)-1$. At even-timed steps $j$ 
(corresponding to selecting an entry from $B^T$) the step can either be again of type $\rightarrow$ or of type $\nearrow$ , in which case 
$p(j+1) = p(j)+1$.

Note that for $i \in [1,d] \cup [n-d, n]$ certain paths will be truncated, resulting in a more cumbersome expression.

From \eqref{LatticePaths} it is easy to see that: with again $i \in (d, n-d)$,
\begin{align}
\label{DerivFormula}
 \frac{\partial }{\partial x_i}\tr V(BB^T)  =&  
\sum_{m=1}^d g_m \sum_{r=1}^{2m} \frac{r}{ x_{i}} \sum_{p\in P_{m,r}}\prod_{j=1}^m 
\Bigg[\left( \begin{cases} x_{i+ p(2j-1)} & \text{ if step $2j-1$ is $\rightarrow$} \\ y_{i+p(2j-1)-1} & \text{ if step $2j-1$ is $\searrow$}\end{cases}\right) \nonumber \\
& \hspace{.32\textwidth} \times \left( \begin{cases} x_{i+p(2j)} & \text{ if step $2j$ is $\rightarrow$} \\ y_{i+p(2j)} & \text{ if step $2j$ is $\nearrow$}\end{cases}\right)\Bigg],  
\end{align}
where now $P_{m, r}$ is the subset of $P_m$ comprised of those paths that posses 
exactly $r$ steps of type $\rightarrow$ at height zero.

The next step is to substitute the values of the fine minimizer into \eqref{DerivFormula}. These are used in the form: 
 for $|k| \le d$ and $i+k \in [1,n]$,
\begin{equation}
\label{shift_fine}
  \xbolt(i+k) =  \phi(i/n) + \frac{1}{n} \Bigl( k \phi'(i/n) + \onex(i/n) \Bigr) + O \left(  \frac{1}{\sqrt{n i^3} } \right),
\end{equation} 
with a like expression for $\ybolt(i+k)$. To see \eqref{shift_fine},  $\phi(t +k/n)$ and $\onex(t+k/n)$ are expanded out to
second and first order, respectively. That both $(d^2/dt^2) \phi(t)$ and $(d/dt) \onex(t)$ are $O(t^{-3/2})$ follows from Lemma
\ref{phi_bounds}. The result of the substitution is:
\begin{align}
\label{DerivFormula2}
& \frac{\partial}{\partial x_i}  \tr V(BB^T) ( \xbolt,\ybolt)  \\
&= \sum_{m=1}^d g_m  \left[  A_{m} \phi^{2m-1} + \frac{\phi^{2m-2} }{n}\left(B_{m}  \onex+ C_{m}  \oney +D_{m} \phi' \right)\right] + O\left(\frac{1}{\sqrt{n i^{3}}}\right), \nonumber
\end{align}
where the functions $\phi,\phi',\onex$ and $\oney$ are all evaluated at $i/n$,
and
\begin{alignat}{2}
\label{coefficients}
A_m =& m\binom{2m}{m}, & \qquad \qquad 
B_m=& \frac{2m^2-2m+1}{2m-1} 
A_m,  \\
C_m =& \frac{2m^2-2m}{2m-1}  A_m, &\qquad\qquad
D_m =& -\frac{m^2-m}{2m-1} A_m.  \nonumber
\end{alignat}
Putting off the derivations behind \eqref{coefficients} we can complete the proof.

For the $x_i$-derivative of the logarithmic term in the Hamiltonian  we have that,
\begin{equation}
\label{logderiv}
 \Bigl( \frac{i+ a - \beta^{-1}}{n}  \Bigr) \frac{1}{\xbolt(i)} = \frac{i + a - \beta^{-1}}{n \phi(i/n)}  - \frac{i \onex(i/n)}{n^2 \phi^2(i/n)}  + O\left(  \frac{1}{\sqrt{n i^3} } \right).
\end{equation}
And if we instead consider $ \frac{\partial H}{\partial y_i}$ we arrive at formulas similar to \eqref{DerivFormula2} and 
\eqref{logderiv} where: in the analog of the former, $B_m$ and $C_m$ change roles and $D_m$ changes sign, while in the latter $\oney$ 
replaces $\onex$ on the right hand side and $a$ is set to zero. The claim then is that the formulas for $\onex$ and $\oney$ are equivalent to:
\begin{align} 
\vect{a-\beta^{-1}-\frac{t}{\phi} \onex}{-\beta^{-1}-\frac{t}{\phi}\oney}
=& \sum_{m=1}^d \frac{g_m m}{2m-1}\binom{2m}{m}\phi^{2m-1} \label{x1ydef}   \\
& \hspace{.05\textwidth}\times \left(\begin{matrix} 
2m^2-2m+1 & 2m^2-2m & -m^2+m \\
2m^2-2m & 2m^2-2m+1 & m^2-m 
\end{matrix}\right) \left( \begin{matrix} \onex \\ \oney \\ \phi' \end{matrix}\right), \nonumber 
\end{align}
in which $t = i/n$, the $A_m$ terms having dropped out by the definition  \eqref{phi_def} of $\phi$. Now adding the rows
of \eqref{x1ydef} we find that the expression for $\onex(t) + \oney(t)$ from \eqref{finemin2} would follow from the identity
$$
   \sum_{m=1}^{d}  \frac{g_m m (4m^2 - 4m +1)}{2m-1} {2m \choose m} \phi(t)^{2m-1} = \frac{1}{\phi'(t)} - \frac{t}{\phi(t)},
$$
but  this is another consequence of \eqref{phi_def}. The expression for $\onex(t) - \oney(t)$ is checked in a similar way.

We remark that while this exact vanishing cannot hold for indices $i \in [1, d]$ or $[n-d, n]$, for $i = O(1)$ Lemma \ref{phi_bounds} implies that the right hand sides of both \eqref{DerivFormula2} and \eqref{logderiv} are $O(1/\sqrt{n})$ 
allowing the estimate \eqref{eq:GradH} to be extended to the lower range.

We now go back and derive the coefficients \eqref{coefficients}. In counting weighted $P_{m, r}$ paths 
it is convenient to introduce the following bijection. Denote by $\tilde{P}_{m,r}$ those 
paths in $P_{m, r}$ for which the first step is of type $\rightarrow$, and define
\begin{equation}
\label{bijection}
  (p,j) \in P_{m,r} \times \{ 1, \dots, r \} \mapsto (\tilde{p}, q) \in \tilde{P}_{m,r} \times \{ 0, \dots, 2m-1 \},
\end{equation}
where $\tilde{p}$ is obtained by shifting $p$ to the left so the the $j^{th}$ height-zero step of type $\rightarrow$ of $p$ becomes the first step of $\tilde{p}$. The number $q$ tracks how far $p$ is shifted to produce $\tilde{p}$.

For $A_m$ we select the same factor (the $\phi(i/n)$ from \eqref{shift_fine}) in each factor in the inner product of \eqref{DerivFormula2}, and so
\begin{equation}
\label{theA}
  A_m = \sum_{r=1}^{2m} r | P_{m,r} | = 2m | \tilde{P}_{m} | = 2 m \sum_{\ell = 0}^{m} {m-1 \choose \ell} { m \choose \ell}.
\end{equation}
The second equality uses \eqref{bijection} with $\tilde{P}_{m}$ denoting the union over all $  \tilde{P}_{m,r}$, that is,  
those paths just  constrained to start with a $\rightarrow$ step. Then we sum over choices of positions for the
 $\ell$ steps of type $\searrow$ among the remaining $m-1$ odd-timed steps, balanced by a choice of $\ell$ (of $m$ possible) steps of type $\nearrow$ at the even-timed steps. The last expression in \eqref{theA} can be written 
 $2 \sum_{\ell=0}^{m} \ell  {2m \choose \ell}^2$ after a change of variable which in turn equals $m {2m \choose m}$.

In computing $B_m$, one of the previous $\phi$ factors is now a $\onex$.  These can only appear at $\rightarrow$ steps,
and so in the sum over $P_{m,r}$ paths one has the weight  $(\# \{ \rightarrow \mbox{steps} \} - 1)$ to account for the possible choices of position of the $\onex$ factor. (The $-1$ shift is due the fact we have differentiated in the $x_i$ variable.)
Hence, with an obvious shorthand and by the same reasoning behind \eqref{theA}:
$$
\label{theB}
  B_m = \sum_{r=1}^{2m} r \sum_{p \in P_{m,r}} ( \#_{\rightarrow}- 1) = 2 m \sum_{p \in \tilde{P}_m} ( \#_{\rightarrow} - 1)
    = 2m \sum_{\ell=0}^m  (2m-2 \ell -1)  {m-1 \choose \ell} { m \choose \ell}.
$$
Similar to before we can rewrite the above as $ \sum_{\ell=0}^m (2 \ell) (2 \ell -1)  {2m \choose \ell}^2$, from which the expression in \eqref{coefficients} follows from the derivation of $A_m$ along with known expressions for $\sum_{\ell=0}^m 
\ell^2  {2m \choose \ell}^2$. The calculation for $C_m$ is basically the same, with $\#_{\nearrow \cup \searrow}$ in place
of $\#_{\rightarrow} -1$.

Finally turning to $D_m$, note that any appearance of $\phi'$ is weighted by the relative height of the path, and we have that 
\begin{align}
D_m=&\sum_{r\geq 0} \sum_{p\in P_{m,r}} r \sum_{j=1}^{m} \left( p(2j-1)+\begin{cases} p(2j) & \text{if step $2j$ is $\rightarrow$} \\ p(2j)-1 &  \text{if step $2j$ is $\nearrow$} \end{cases} \right)  \nonumber  \\
=&\sum_{r\geq 0} \sum_{p\in P_{m,r}} r \sum_{j=1}^{m} 2p(2j-1) 
=  4m \sum_{p\in \widetilde{P}_m}  \sum_{j=1}^m p(2j-1),  \label{DFormula1}
\end{align}
where we have used that the corresponding weight is always the smaller of the heights across any step.
To evaluate the last sum in (\ref{DFormula1}) we use a method which we learned from \cite{EMP} (see in particular Prop.~4.2):
\begin{align*}
\ \sum_{p\in \widetilde{P}_m}  \sum_{j=1}^m p(2j-1) 
=& \sum_{\substack{i_1,k_1,m_1,i_2,k_2,m_2\geq 0 \\ m_1+m_2 =m \\ i_1-k_1 +i_2 -k_2 =0}}
(i_1 -k_1) \binom{m_1-1}{i_1}\binom{m_1 -1}{k_1} \binom{m_2+1}{i_2} \binom{m_2}{k_2}  \nonumber  \\
=&[w^0] [z^m] 
 \left( \sum_{i_1,k_1,m_1,i_2,k_2,m_2\geq 0} u \partial_{u} u^{i_1-k_1} v^{i_2-k_2}z^{m_1+m_2} \right. \nonumber \\
&
\left. \hspace{.2\textwidth}\times \binom{m_1-1}{i_1}\binom{m_1 -1}{k_1} \binom{m_2+1}{i_2} \binom{m_2}{k_2}
 \right) \Big|_{u, v \mapsto w}  \nonumber \\
=&[w^0] [z^m]
\left(u \partial_{u} \frac{z}{1-z(u+2+u^{-1})} \frac{1+v}{1-z(v+2+ v^{-1})} \right) \Big|_{u, v \mapsto w}.   
\end{align*}
In line one, $i_1$ and $k_1$ count the running number of type $\nearrow$ and $\searrow$ steps, respectively. In line 
two, we have used the notation  $[x^p] f(x)$ for the $p^{th}$ coefficient of the Taylor expansion of the (analytic) function
$x \mapsto f(x)$. The remaining evaluations are straightforward.
\end{proof}

\subsection{Minimizers and boundary conditions}

Here we consider the Hamiltonian $H$ subject to certain boundary conditions. To be more precise, start by fixing  an interval $I = [i_0, i_1] \subset [1, n]$ and denote by $\partial I$ (the boundary of $I$)  the at most $d$ indices to the left/right of $I$.
That is, $\partial I =  ([i_0-d, i_i-1] \cup [i_1+1, i_1+d] ) \cap [1,n]$. View $H$ as a function of $(x,y) \in  I$ with those coordinates whose indices lie in $\partial I$ prescribed to equal some values $q$. By the assumptions on $V$, those $(x,y) \in I \cup \partial I$ decouple from the $(x,y) \in I$ given $q$. This restricted function is referred to as the ``conditional Hamiltonian" $H_q$ with boundary conditions $q$.

The goal is to quantify at what rate the minimizers of $H_q$ become independent of $q$ as one moves away from the boundary.

\begin{proposition} 
\label{thm:BCdecay}
For an interval $I \subset [1, n]$ consider the conditional Hamiltonian $H_q$, i.e.,  $H$ restricted to $I$ with the coordinates in $\partial I$ set equal to some values $q$. Assume $\| q\|_{\infty} \le c'$. Then, with $(x^q, y^q)$ the minimizer of $H_q$, it holds that
\begin{equation}
\label{eq:BCdecay}
     | x_i^q - x_i^o | + | y_i^q - y_i^o | \le c \,   \| q -  (x^o,y^o) \|_{\infty, \partial I} \, e^{- \mathrm{dist}(i, \partial I)/c},
\end{equation}
for any $i \in I$.
Here $c = c(V, \beta, a, c')$.  
 If $I  = [i_0, i_1] \subset [1, n-d]$ there is also the bound,
\begin{equation}
\label{eq:BCdecay1}
     | x_i^q - \xbolt_i | + | y_i^q - \ybolt_i | \le   c \, \max \left( \frac{1}{ \sqrt{ n i_0^3}} , \   \| q -  (\xbolt, \ybolt) \|_{\infty, \partial I} \, e^{- \mathrm{dist}(i, \partial I)/c} \right),
\end{equation}
for any $i \in I$.
\end{proposition}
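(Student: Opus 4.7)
The plan is to exploit uniform convexity together with the local (banded) structure of $H$ to reduce the problem to an exponential decay estimate for the inverse of a banded positive-definite matrix. Because $V$ has degree $d$ and $B$ is bidiagonal, the Hessian of $H$ has bandwidth $d$ in the joint $(x,y)$ indexing, and for any $i \in I$ the partial derivative $\partial_i H$ depends only on coordinates within distance $d$ of $i$, hence never outside $I \cup \partial I$. I would extend $(x^q, y^q)$ by $q$ on $\partial I$ and let $\Delta$ denote the difference on $I \cup \partial I$ between this extension and $(x^o, y^o)$. Writing both $\nabla_i H_q(x^q, y^q) = 0$ and $\nabla_i H(x^o, y^o) = 0$ as $\nabla_i H$ evaluated at the respective configurations, the fundamental theorem of calculus applied along the connecting segment would yield the linear system
\[
 M \, \Delta|_I \ = \ - R \, \Delta|_{\partial I},
\]
in which $M$ and $R$ are the $I \times I$ and $I \times \partial I$ blocks, respectively, of the averaged Hessian $\int_0^1 \mathrm{Hess}\,H\,ds$ along that segment.

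Next I would establish that $M$ is uniformly positive definite (immediate from $\mathrm{Hess}\,H \ge cI$) and uniformly bounded above: the standard convexity inequality $\|\Delta|_I\|_2 \le c^{-1} \|\nabla H_q(x^o, y^o)\|_2 = O(\|q - (x^o, y^o)|_{\partial I}\|_\infty)$, combined with $\|q\|_\infty \le c'$ and the a priori control of $(x^o, y^o)$ from Lemma~\ref{phi_bounds} and Proposition~\ref{prop:FineMin}, places $(x^q, y^q)$ together with the full connecting segment in a bounded $L^\infty$ region on which the polynomial Hessian entries are bounded by some $C = C(V, \beta, a, c')$. With $M$ and $R$ both banded of bandwidth $d$, the classical Demko--Moss--Smith theorem on inverses of banded symmetric positive definite matrices then gives $|(M^{-1})_{ij}| \le C' \rho^{|i-j|/d}$ for some $\rho \in (0,1)$ depending only on $\mathrm{cond}(M)$ and $d$. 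Substituting $\Delta|_I = - M^{-1} R \, \Delta|_{\partial I}$, and using that $R \, \Delta|_{\partial I}$ is supported within distance $d$ of $\partial I$ with entries bounded by $C \|q - (x^o, y^o)\|_{\infty, \partial I}$, will produce \eqref{eq:BCdecay}.

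For \eqref{eq:BCdecay1} I would repeat the derivation centered at $(\xbolt, \ybolt)$ in place of $(x^o, y^o)$. Since the reference gradient no longer vanishes, the system becomes inhomogeneous,
\[
 M \, \Delta|_I \ = \ - R \, \Delta|_{\partial I} \ + \ \nabla H(\xbolt, \ybolt)|_I.
\]
The boundary term is handled exactly as before. For the bulk source, Lemma~\ref{lem:GradHBound} gives $|\nabla_j H(\xbolt, \ybolt)| \le C / \sqrt{n j^3}$ for $j \in I \cap [1, n-d]$, and the row-sum bound $\|M^{-1}\|_{\infty \to \infty} \le C \sum_k \rho^{|k|/d} < \infty$ (immediate from the exponential off-diagonal decay) converts this into $|(M^{-1} \nabla H(\xbolt, \ybolt))_i| \le C \sup_{j \ge i_0} |\nabla_j H(\xbolt, \ybolt)| \le C'/\sqrt{n i_0^3}$, producing the additional term on the right of \eqref{eq:BCdecay1}.

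The main technical obstacle will be securing the uniform upper bound on $M$, since the entries of $\mathrm{Hess}\,\mathrm{tr}\,V(BB^T)$ are polynomials in the entries of $B$ and could otherwise degenerate. The hypothesis $\|q\|_\infty \le c'$ plays precisely this role: combined with convexity it anchors the minimizer, and the full connecting segment, in a bounded region of $(x,y)$-space on which both the lower and upper bounds on $M$, and hence the decay rate $\rho$ in the Demko--Moss--Smith estimate, depend only on $(V, \beta, a, c')$.
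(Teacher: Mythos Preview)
Your linearization via the mean-value Hessian and the Demko--Moss--Smith bound is a natural alternative, but there is a genuine gap. Demko--Moss--Smith needs a two-sided spectral bound on $M$, and you only secure the upper bound for the \emph{polynomial} entries of $\mathrm{Hess}\,H$: you argue that the segment stays in a bounded $L^\infty$ region, which controls $\mathrm{Hess}\,\tr V(BB^T)$. You never address the logarithmic part, whose diagonal contribution at index $k$ is $(k+a-\beta^{-1})/(n x_k^2)$. Controlling this requires a \emph{lower} bound on $x_k$ along the segment, and for small indices (where $x_k^o\asymp\sqrt{k/n}$) the a priori convexity estimate $\|\Delta|_I\|_2=O(1)$ does not prevent $x_k^q$, or intermediate points, from being far smaller than $x_k^o$. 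The resulting diagonal entry of $M$ can be of order $\sqrt{n/k}$, the condition number blows up with $n$, and the decay rate in Demko--Moss--Smith degenerates to $1-O(n^{-1/4})$, destroying the $n$-independent exponential decay you need. (A minor point: invoking Proposition~\ref{prop:FineMin} here is circular, since that proposition is proved using the present one; the $L^\infty$ upper bound on $(x^o,y^o)$ you actually need is available directly from the proof of Lemma~\ref{lem:MinBounds}.)

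The paper's argument avoids the upper-Hessian issue altogether. It never inverts an averaged Hessian; instead it uses only the convexity inequality $\|(x^q,y^q)-(x^o,y^o)\|_{2,J}\le c^{-1}\|\nabla H_q(x^o,y^o)\|_{2,J}$ (Lemma~\ref{lem:MinBounds}), which needs no upper bound on $\mathrm{Hess}\,H$, and applies it on a nested family $J_0\subset J_1\subset\cdots\subset I$ of subintervals growing outward from $i$ in steps of $d$. Since $\nabla H_q(x^o,y^o)$ is supported within $d$ of $\partial J_k$ and is polynomial in the boundary data there, this yields a recursion $\sum_{j\le k}a_j\le c\,a_{k+1}+\text{(source)}$ on the block-wise $\ell^2$ errors, and a discrete Gronwall lemma (Lemma~\ref{lem:Gronwall}) converts the recursion into geometric decay in the block index. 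The only inputs are the convexity constant and an $L^\infty$ \emph{upper} bound on the minimizers, so the logarithmic singularity never enters.
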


Proposition  \ref{thm:BCdecay} is a deterministic version of decorrelation, and will play an important role in the blocking estimates behind the functional central limit theorem in Section \ref{sec:CLT}. More presently it used to turn the calculation of Lemma \ref{lem:GradHBound} into a fairly optimal estimate on the distance between the true and fine minimizers (Proposition \ref{prop:FineMin} in the next subsection).

The proof of Proposition \ref{thm:BCdecay} is based on the following two lemmas. The first, Lemma \ref{lem:MinBounds},
is a direct consequence of the uniform convexity criteria \eqref{ConvexGift1} - \eqref{ConvexGift2}. This is then bootstrapped 
to yield the proposition with the help of Lemma \ref{lem:Gronwall}, which is a kind of discrete Gronwall inequality. The proof of the latter is a simple inductive argument which is not reproduced here.

The program  is quite similar to that in Sections 6-7 of \cite{KRV}. However, the use of \eqref{ConvexGift1} - \eqref{ConvexGift2} streamlines things considerably, bypassing for example the a priori lower bounds on minimizers required in \cite{KRV}. 

\begin{lemma}
\label{lem:MinBounds}
 For any conditional minimizer $(x^q, y^q)$ of an $H_q$ defined on some $I \subset [1,n]$, 
\begin{equation}
\label{qminbound1}
   \| (x^q, y^q) - (x^o, y^o)  \|_{2, I}^2  \le \rho(q) \| q - (x^o, y^o)  \|_{2, \partial I}^2 
\end{equation}
where $\rho(q)$ is polynomial of degree $2d$ in the boundary variables $q$ (with bounded coefficients depending only on $V, \beta,$ and $a$).
And if $I \subset [1,n-d]$ it also holds that
\begin{equation}
\label{qminbound2}
   \| (x^q, y^q) - (\xbolt, \ybolt)  \|_{2, I}^2  \le   \sum_{i \in I} \frac{c}{{n i^3}}   + \rho(q) \| q - (\xbolt, \ybolt)  \|_{2, \partial I}^2 
\end{equation}
with another polynomial $\rho$ of degree $2d$ and $c = c(V, \beta, a)$.
\end{lemma}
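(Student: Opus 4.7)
The central observation is that the conditional Hamiltonian $H_q$ on $I$ is again uniformly convex: its Hessian is a principal submatrix of $\mathrm{Hess}\,H\ge c I$. Hence the convexity gift \eqref{ConvexGift2} applies verbatim to $H_q$, giving, for any test point $(x,y)|_I$,
\begin{equation*}
  \|(x,y)|_I-(x^q,y^q)\|_{2,I}\;\le\; c^{-1}\,\|\nabla H_q((x,y)|_I)\|_{2,I}.
\end{equation*}
The two bounds are obtained by inserting $(x^o,y^o)|_I$ to get \eqref{qminbound1} and $(\xbolt,\ybolt)|_I$ to get \eqref{qminbound2}, then controlling the right-hand gradient.

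For \eqref{qminbound1}, observe that the log terms in $H$ are single-site, while $\tr V(BB^T)$ couples each coordinate only to its $d$ nearest neighbours; therefore, for any bulk index $i\in I$ with $\mathrm{dist}(i,\partial I)>d$ one has $\partial_{x_i}H_q((x^o,y^o)|_I)=\partial_{x_i}H(x^o,y^o)=0$. For $i$ within distance $d$ of $\partial I$, a first-order Taylor expansion in the boundary coordinates yields
\begin{equation*}
  \partial_{x_i}H_q((x^o,y^o)|_I)=\sum_{j\in\partial I}\bigl(q_j-(x^o,y^o)_j\bigr)\int_0^1\partial_{q_j}\partial_{x_i}H(\gamma_t)\,dt,
\end{equation*}
where $\gamma_t$ interpolates only in the $\partial I$ entries. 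The mixed second partials of $H$ are fixed polynomials of degree $\le 2d-2$ in the involved coordinates; squaring the displayed line, summing over the $O(d)$ boundary-adjacent indices, and applying Cauchy--Schwarz produces the claimed bound, with $\rho(q)$ a polynomial in the boundary variables whose coefficients depend only on $V,\beta,a$ once the values of $(x^o,y^o)|_{\partial I}$ are absorbed (these are uniformly bounded in $n$, as follows from $\phi(t)\asymp\sqrt{t}$ in Lemma \ref{phi_bounds} together with the $O(1/n)$ correction in the definition of the fine minimiser).

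Bound \eqref{qminbound2} is proved identically after substituting $(\xbolt,\ybolt)|_I$ for $(x^o,y^o)|_I$, with a single modification: the bulk partials $\partial_{x_i}H_q((\xbolt,\ybolt)|_I)=\partial_{x_i}H(\xbolt,\ybolt)$ no longer vanish, but by Lemma \ref{lem:GradHBound} they are bounded in absolute value by $c/\sqrt{ni^3}$. Squaring and summing produces the $\sum c/(ni^3)$ term in \eqref{qminbound2}, while the $q$-dependent piece near $\partial I$ is handled exactly as above. The main technical obstacle is purely bookkeeping: carefully tracking the degree of the polynomial $\rho(q)$ arising from the boundary perturbation (which requires that the coefficients of $\tr V(BB^T)$ together with the $n$-uniform bounds on $(x^o,y^o)|_{\partial I}$ and $(\xbolt,\ybolt)|_{\partial I}$ combine into constants depending only on $V,\beta,a$), and ensuring the Cauchy--Schwarz loss is absorbed into that same polynomial.
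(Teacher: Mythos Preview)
Your proposal is essentially the paper's own argument: apply \eqref{ConvexGift2} to the uniformly convex $H_q$ at the test point $(x^o,y^o)|_I$ (respectively $(\xbolt,\ybolt)|_I$), note that bulk partials vanish (respectively are controlled by Lemma \ref{lem:GradHBound}), and bound the boundary-adjacent partials as polynomials in $q$ times the boundary displacement. The paper's ``pairing entries'' is your Taylor expansion in the $\partial I$ coordinates.

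There is one genuine gap. To absorb the interior-coordinate dependence of your mixed partials $\partial_{q_j}\partial_{x_i}H(\gamma_t)$ into constants depending only on $V,\beta,a$, you need $\|(x^o,y^o)\|_\infty$ bounded uniformly in $n$. Your justification (``as follows from $\phi(t)\asymp\sqrt t$ \dots together with the $O(1/n)$ correction'') establishes this only for $(\xbolt,\ybolt)$, not for the true minimizer $(x^o,y^o)$. You cannot invoke proximity of $(x^o,y^o)$ to $(\xbolt,\ybolt)$ here, since that is Proposition \ref{prop:FineMin}, which is proved downstream via Proposition \ref{thm:BCdecay}, which in turn rests on the present lemma. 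The paper breaks this potential circularity by proving $\|(x^o,y^o)\|_\infty=O(1)$ directly as the first step of the proof: apply \eqref{ConvexGift2} on the full index set with test point $(\xbolt,\ybolt)$ to get
\[
  \|(x^o,y^o)-(\xbolt,\ybolt)\|_\infty \;\le\; \|(x^o,y^o)-(\xbolt,\ybolt)\|_2 \;\le\; c^{-1}\|\nabla H(\xbolt,\ybolt)\|_2,
\]
and the right-hand side is $O(1)$ by summing the bound of Lemma \ref{lem:GradHBound}. Combined with the explicit boundedness of $(\xbolt,\ybolt)$, this gives the needed uniform bound on $(x^o,y^o)$. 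Once you insert this preliminary step, your argument is complete and matches the paper's.
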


\begin{lemma}
\label{lem:Gronwall}
Let $a_i$  and $b_i$ be nonnegative sequences satisfying 
$
   \sum_{i=0}^k a_i \le c a_{k+1} + \sum_{i=0}^k b_i 
$
for a constant $c$ and all $k \le m$. Then it holds that
$$
   a_0 \le c \left( \frac{c}{c+1} \right)^k a_{k+1} + \sum_{i=0}^k  \left( \frac{c}{c+1} \right)^i b_i,
$$
again for all $k \le m$.
\end{lemma}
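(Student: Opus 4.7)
The plan is to convert the hypothesis into a clean recursion on the partial sums and then iterate. Set $S_k := \sum_{i=0}^k a_i$ and $T_k := \sum_{i=0}^k b_i$, and write $a_{k+1} = S_{k+1} - S_k$. The assumption $S_k \le c\,a_{k+1} + T_k$ then rearranges to the one-step contraction
\[
S_k \;\le\; r\, S_{k+1} + (1-r)\, T_k, \qquad r := \frac{c}{c+1},
\]
which is precisely the shape suitable for a telescoping/Gronwall-type argument.

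I would then induct on $k$ to establish the intermediate bound
\[
S_0 \;\le\; r^k S_k + (1-r) \sum_{i=0}^{k-1} r^i T_i.
\]
The base case $k=0$ is the tautology $S_0 \le S_0$, and the inductive step is a single application of the displayed recursion: substitute the bound for $S_k$ inside the right-hand side and collect terms. Having reached this point, one further invocation of the original hypothesis replaces $S_k$ by $c\,a_{k+1} + T_k$, and since $a_0 = S_0$ this gives
\[
a_0 \;\le\; c\, r^k a_{k+1} + r^k T_k + (1-r) \sum_{i=0}^{k-1} r^i T_i.
\]

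The remaining step is a short bookkeeping calculation: expand each $T_i$ as $\sum_{j\le i} b_j$, swap the order of summation, and evaluate the resulting finite geometric series. Because $1-r = 1/(c+1)$, the weights collapse cleanly: for $j < k$ the coefficient of $b_j$ becomes $r^k + r^j(1-r^{k-j}) = r^j$, and for $j = k$ it is $r^k = r^j$. This matches the target $\sum_{j=0}^k r^j b_j$ exactly, yielding the advertised estimate. There is no genuine obstacle here; the only care required is checking that the choice $r = c/(c+1)$ makes the geometric weights line up with those in the claimed bound, and the rest is a routine induction.
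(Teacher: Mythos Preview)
Your argument is correct: the substitution $S_k=\sum_{i\le k}a_i$, $T_k=\sum_{i\le k}b_i$ turns the hypothesis into the contraction $S_k\le r S_{k+1}+(1-r)T_k$ with $r=c/(c+1)$, the induction on $k$ is clean, and your geometric-series bookkeeping for the $b_j$ coefficients checks out exactly. The paper itself does not reproduce a proof, saying only that it is ``a simple inductive argument,'' so your write-up is entirely in line with what the authors had in mind.
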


\begin{proof}[Proof of Lemma \ref{lem:MinBounds}] 
We first observe that $(x^o, y^o)$ is bounded in sup-norm, independent of the dimension.
By  \eqref{ConvexGift2}: with a different constant $c$, 
$$
  \|  (\xbolt, \ybolt) - (x^o, y^o) \|_{\infty}^2 \le \| (\xbolt, \ybolt) - (x^o, y^o) \|_{2}^2 \le  c 
   \| \nabla H (\xbolt, \ybolt) \|_2^2,
$$
and this is $O(1)$ by Lemma \ref{lem:GradHBound}. The explicit formulas \eqref{finemin2} and \eqref{finemin1}  then show that $\| (\xbolt, \ybolt) \|_{\infty}$ is bounded independently of $n$ which yields the  claim.

Similarly, 
\begin{equation}
\label{L2Bound1}
   \|  (x^o, y^o) - (x^q, y^q) \|_{I,\infty}^2 \le 
    c \sum_{i \in I: \, {\mathrm{dist}}(i, \partial I) \le d }  \,   \Bigl(  | \partial_{x_i} H_q (x^o, y^o) |^2 +  
    | \partial_{y_i} H_q  (x^o, y^o)  |^2 \Bigr),
\end{equation}
since if $i \in I$ with $\mathrm{dist}(i, \partial I) > d$, we have that 
$(\partial H_q /\partial z_i)(x^o,y^o) = (\partial  H/\partial z_i)(x^o,y^o) = 0 $ for $z_i = x_i$ or $y_i$. For the remaining $2d$ terms denote by $P_V$ the polynomial part of $H$ and note: with again $z_i = x_i$ or $y_i$,
$$
  | \partial_{z_i} H_q (x^o, y^0 )|^2 = | \partial_{z_i} P_V(x^o, y^o) - \partial_{z_i} P_V(x^o, y^o; q) |^2,
 $$
 where the notation indicates that coordinates in $\partial I$ are evaluated at either the entries of $(x^o, y^o)$ or the corresponding $q$.  But by pairing entries the above is bounded by a sum of $|q -z_i|^2$ terms with coefficients that are polynomials (of degree at most $2d$) in $(x^o, y^o, q)$. As we have just shown that 
$\| (x^o, y^o)\|_{\infty} $ is uniformly bounded, all these polynomial factors can be further controlled above by a $\rho(q)$ with the claimed properties.

For \eqref{qminbound2} we basically repeat the argument. The key difference being that in the estimate corresponding to
\eqref{L2Bound1}, the sum over $i \in I : {\mathrm{dist}}(i, \partial I) > d $ on the right hand side does not vanish, but instead produces a multiple of $\sum_{i\in I} \frac{1}{n i^3}$, courtesy Lemma \ref{lem:GradHBound}. This also explains the restriction of $I$ in this case to $[1, n-d]$.
\end{proof}

\begin{proof}[Proof of Proposition \ref{thm:BCdecay}]
Consider first \eqref{eq:BCdecay1}. The idea is to apply the inequality  \eqref{qminbound2} of Lemma \ref{lem:MinBounds}
to a well-chosen collection of subintervals of $I$. 

Fix an index $i \in I$ and  decompose $I \cup \partial I$ into consecutive blocks $I_{-m}, \dots, I_m$ with $\partial I  = I_{-m} \cup I_{m}$ such that $i \in I_0$ and each $I_j$ for $j \neq 0$ is of length $d$.  Denote $J_k = \cup_{|j| \le k } I_j$ and $\partial J_k = I_{-k-1} \cup I_{k+1}$. Then,  as 
a consequence of \eqref{qminbound2}, we have that
\begin{equation}
\label{thesecret}
   \| (x^q, y^q) -  (\xbolt, \ybolt)  \|_{2, J_k}^2  
                                      \le  c \| (x^q, y^q) -  (\xbolt, \ybolt)    \|_{2, \partial J_k}^2 +  c \sum_{i \in J_k} \frac{1}{n i^3}, 
\end{equation}
for $k = 0,\dots m-1$. 

Two remarks are in order.
First, a direct application of  \eqref{qminbound2} would have the constant $c$ multiplying 
$\| (x^q, y^q) -  (\xbolt, \ybolt)    \|_{2, \partial J_k}^2$ by a polynomial in the variable $(x^q, y^q)$ appearing in $J_k$. But \eqref{qminbound2} (or \eqref{qminbound1}) also shows that every $|x^q|$ and $|y^q|$ is bounded by the same polynomial in the variables $q$. By assumption the $q$ are bounded, and so it is possible to use the same constant $c$ (for all $k$) throughout \eqref{thesecret}.
Second, in case the definition of any $I_j$ places it outside of $\{ 1, n-d \}$  the corresponding sum  is simply taken as empty. This allows the conclusion to extend to one-sided minimizers; when for example $I$ is of the form $[1,L]$ with boundary conditions placed at $[L+1, L+d]$. 
 
Returning to \eqref{thesecret}, this system is exactly as in the hypothesis of Lemma \ref{lem:Gronwall} with 
 $$
 a_j = \| (x^q, y^q) -  (\xbolt, \ybolt)  \|_{2, I_{-j} \cup I_j}^2, \quad b_j =   \sum_{i \in I_{-j} \cup  I_j}  \frac{c}{n i^3},
 $$
 and so there is a constant $c'$ for which
 $$
   a_0 \le c' e^{-m/c'} a_m + c' \max_{j \in [0, m-1]} b_j.
 $$
 This is recognized as \eqref{eq:BCdecay1} upon noting: $a_0 \ge ( | x_i^q - \xbolt_i | + | y_i^q - \ybolt_i | )^2$, 
 $a_m \le d \| q -  (\xbolt, \ybolt) \|_{\infty, \partial I}^2$, and $b_j \le \frac{1}{n i_0^3}$ for al $j \in [0, m-1]$.
 The proof of  \eqref{eq:BCdecay} is identical save that in that case $b_j \equiv 0$.
\end{proof}

\subsection{The limiting mean}

The results of the previous subsection yield the following.

\begin{proposition}
\label{prop:FineMin}
There is a constant $c = c(V, \beta, a)$ so that
\begin{equation}
\label{eq:fine_est}
   |  (\xbolt_i, \ybolt_i) - (x_i^o, y_i^o) |  \le  c'  \left\{   
   \begin{array}{ll} \frac{1}{\sqrt{n}} & \mbox{ for } i \le c \log n, \\
      \frac{1 }{\sqrt{n i^3}}  & \mbox{ for } c \log n  < i \le n - c \log n, \\
                    e^{-(n-i)/c'} & \mbox{ for }   n - c \log n < i \le n, \end{array}\right. 
\end{equation}
with a constant $c'$ depending on $c$.
It follows that
\begin{align}
\label{eq:mean1}
   \sum_{k= \lfloor n s \rfloor}^{\lfloor nt \rfloor} \log \frac{x_k^o}{y_k^o} 
     =  &   - \left( \frac{a}{2} + \frac{1}{4} \right) \log \frac{\theta(t)}{\theta(s)} +  \frac{1}{2} \log \frac{\phi(t)}{\phi(s)} \\
        & \hspace{.5cm} +  
 O \left( \frac{1}{ns}  + \sum_{k= \lfloor ns \rfloor}^{ \lfloor c \log n\rfloor} \frac{1}{\sqrt{k} } 
 + \sum_{k= \lfloor n (1-t) \rfloor}^{ \lfloor c \log n \rfloor}  e^{-k/c}   \right), \nonumber
\end{align} 
for all $0 \le s < t \le 1$.
\end{proposition}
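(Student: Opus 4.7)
\textbf{Proof plan for Proposition \ref{prop:FineMin}.}

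To prove the pointwise estimate \eqref{eq:fine_est}, I plan to combine Lemma \ref{lem:GradHBound} with Proposition \ref{thm:BCdecay}, choosing the interval $I$ differently in each of the three regimes. For the bulk indices $c\log n < i \le n - c\log n$, I apply \eqref{eq:BCdecay1} to a window $I = [i-M, i+M]$ with $M$ a sufficiently large multiple of $\log n$, taking boundary data $q = (x^o, y^o)|_{\partial I}$ so that $(x^q, y^q) = (x^o, y^o)|_I$. The a priori bound $\|(x^o, y^o) - (\xbolt, \ybolt)\|_\infty = O(1)$ already established during the proof of Lemma \ref{lem:MinBounds} shows that the exponential factor $e^{-M/c}$ crushes the boundary contribution well below $1/\sqrt{ni^3}$, while $(i - M) \asymp i$ yields the claimed bulk rate. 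For the left edge $i \le c\log n$, take instead the one-sided window $I = [1, n - c\log n]$: now $i_0 = 1$ forces the $1/\sqrt{ni_0^3}$ term to degrade to $1/\sqrt{n}$, while the right boundary of $I$ is exponentially far away.

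The right-edge regime $i > n - c\log n$ is more delicate, since \eqref{eq:BCdecay1} requires $I \subset [1, n-d]$ and Lemma \ref{lem:GradHBound} only furnishes an $O(1)$ gradient at the last $d$ indices. The plan is to apply \eqref{eq:BCdecay} on $I = [n - 2c\log n, n]$ with boundary data $\tilde q = (\xbolt, \ybolt)|_{\partial I}$. The already-established bulk estimate at $\partial I$ shows $\|\tilde q - (x^o, y^o)\|_{\infty, \partial I} = O(1/n^2)$, so \eqref{eq:BCdecay} gives $|(x^{\tilde q})_i - (x^o)_i| \ll e^{-(n-i)/c'}$. The remaining task is to compare $(x^{\tilde q}, y^{\tilde q})$ with $\xbolt|_I$; here the $\ell^2$ convexity gift \eqref{ConvexGift2} applied to $H_{\tilde q}$ bounds the difference in $\ell^2$ by a constant, since $\nabla H_{\tilde q}(\xbolt|_I)$ has $O(1/\sqrt{ni^3})$ interior entries and at most $d$ boundary entries of size $O(1)$. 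Promoting this $\ell^2$ bound to the pointwise exponential decay $e^{-(n-i)/c'}$ requires running the blocking/Gronwall argument behind Proposition \ref{thm:BCdecay} inwards from $i = n$, starting from the trivial $O(1)$ bound at the right endpoint. This step is the one I expect to be the main obstacle.

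The mean formula \eqref{eq:mean1} then follows by writing
\[
   \log \frac{x_k^o}{y_k^o} = \log \frac{\xbolt_k}{\ybolt_k} + O\!\left( \frac{|x_k^o - \xbolt_k| + |y_k^o - \ybolt_k|}{\phi(k/n)} \right),
\]
Taylor-expanding $\log(\xbolt_k/\ybolt_k) = (\onex(k/n) - \oney(k/n))/(n\phi(k/n)) + O(n^{-2})$ via \eqref{finemin1}, and interpreting the leading sum as a Riemann approximation to $\int_s^t (\onex(r) - \oney(r))/\phi(r)\,dr$. Substituting the explicit formula \eqref{finemin2} for $\onex - \oney$ and using the identity $\int_0^r du/\phi(u) = \sqrt{\theta(r)/\kappa}$ implied by \eqref{theta_def}, this integral evaluates in closed form to the stated combination of $\log(\theta(t)/\theta(s))$ and $\log(\phi(t)/\phi(s))$. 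The error remainder is summed piecewise using \eqref{eq:fine_est}: the bulk contributes $\sum 1/k^2 \lesssim 1/(ns)$, the left edge contributes $\sum 1/\sqrt{k}$, and the right edge contributes $\sum e^{-(n-k)/c'}$, together reproducing the big-$O$ error.
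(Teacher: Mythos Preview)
Your plan is essentially the paper's approach for the bulk, the left edge, and the derivation of \eqref{eq:mean1}.  Two small remarks.

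For the right edge you are working harder than necessary.  The paper does not introduce the auxiliary minimizer $(x^{\tilde q},y^{\tilde q})$; it simply keeps $q=(x^o,y^o)$ on the (left) boundary of $I=[n-c''\log n,\,n]$, so that $(x^q,y^q)=(x^o,y^o)$ on $I$, and reruns the blocking argument of Proposition~\ref{thm:BCdecay} directly to bound $|(x^o)_i-\xbolt_i|$.  The step you flagged as the main obstacle is in fact already contained in Lemma~\ref{lem:Gronwall}: the $O(1)$ forcing coming from the last $d$ indices (where Lemma~\ref{lem:GradHBound} gives only $|\partial H(\xbolt,\ybolt)|=O(1)$) lives in the block at distance $j^\ast\asymp(n-i)/d$ from $I_0$, so it enters the weighted sum $\sum_j (c/(c+1))^j b_j$ with coefficient $(c/(c+1))^{(n-i)/d}=e^{-(n-i)/c'}$.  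All other $b_j$ are $O(n^{-2})$ in this window, and the left boundary contribution $a_m$ is killed by choosing $c''$ large.  So the exponential decay falls out automatically; no separate ``promotion'' step is needed, and your intermediate comparison via $\tilde q=(\xbolt,\ybolt)$ is redundant (though not wrong).

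One correction in the second half: the Taylor remainder for $\log(\xbolt_k/\ybolt_k)$ is $O(k^{-2})$, not $O(n^{-2})$, because $\onex(k/n)/(n\phi(k/n))=O(1/k)$ near the origin by Lemma~\ref{phi_bounds}.  Summed over $k\ge\lfloor ns\rfloor$ this still gives $O(1/(ns))$, so your conclusion stands.
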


For fixed $ s < t $ bounded away from $0$ and $1$, the error term on the righthand side of \eqref{eq:mean1} reduces to $O(1/n)$ and we have  advertised  limiting mean in line one, recall \eqref{mean_convergence}.  The estimate is stated in this more complete form for use in Section  \ref{sec:Norm}.

\begin{proof} For \eqref{eq:fine_est} start with the case that $i$ is a  distance $ O( \log n)$ away from both $1$ and $n$. Let $I = [i - c \log n, i + \log n] $ and consider the conditional Hamiltonian $H_q$ with $q = (x^o, y^o)$ on $\partial I$.  The conditional minimizer is then the true minimizer through $I$ and we can apply \eqref{eq:BCdecay1} of Proposition \ref{thm:BCdecay}
with $(x^q,y^q) = (x^o, y^o)$. The result is that
\begin{equation}
\label{eq:BCcase1}
     |  (\xbolt_i, \ybolt_i) - (x_i^o, y_i^o) |   \le c' \max  \left( \frac{1}{\sqrt{n (i - c \log n)^3 }}, e^{ - c'' \log n} \right), 
\end{equation}
having used that $(x^o, y^o)$ has bounded entries (proved in the coarse of establishing Lemma \ref{lem:MinBounds}). But $c''$ can be made large with $c$ and by choice $ i - c \log n = O(i)$.
The other two cases are similar.  For example for $i \le c \log n$, consider $I = [1, c'' \log n]$ with one-sided boundary conditions $= (x^o, y^o)$ on the $d$-length stretch to the right of $c'' \log n$ (and $c'' \gg c$). Then the boundary component in the analog of \eqref{eq:BCcase1} can be made smaller than any inverse power of $n$, but the $O(n^{-1/2})$ stemming from $i_0 = 1$ cannot be beat.

 Moving to \eqref{eq:mean1} we start by noting that
\begin{equation*}
  \log  \frac{\xbolt_k}{\ybolt_k}  
     = \log \left( \frac{1 + \frac{\onex (k/n)}{n \phi(k/n)}}{1 + \frac{\oney (k/n)}{n \phi(k/n)}} \right)
    = 
   \frac{ \onex(k/n) - \oney(k/n)}{n \phi(k/n)} + O(k^{-2}), 
 \end{equation*}
 by the estimates of Lemma \ref{phi_bounds}. Summed over $[ns, nt]$ this contributes to $O( (ns)^{-1} )$ to the advertised error.  
 Next we have that,
\begin{equation*}
  \sum_{k= \lfloor n s \rfloor}^{\lfloor nt \rfloor}  \frac{ \onex(k/n) - \oney(k/n)}{n \phi(k/n)} = \int_s^t   \frac{ \onex(u) - \oney(u)}{\phi(u)} du + O \left(\frac{1}{ns} \right),
\end{equation*} 
where the integral equals the right hand side of the first line in \eqref{eq:mean1}. The error here
follows from the standard Riemann sum bound  given that 
$ \left| \frac{d}{du} \left(  \frac{ \onex(u) - \oney(u)}{\phi(u)} \right)  \right| = O (u^{-2})$  for small $u >0$, again by Lemma
\ref{phi_bounds}. The remaining overall error term is (the sum of)
\begin{equation*}
    \log \left( 1 + \frac{x_k^o - \xbolt_k}{\xbolt_k} \right) -   \log \left( 1 + \frac{y_k^o - \ybolt_k}{\ybolt_k} \right)
     =  \left\{   
   \begin{array}{ll}  O ( k^{-1/2} )& \mbox{ for } k \le c \log n, \\
       O(k^{-2})  & \mbox{ for } c \log n  < k \le n - c \log n, \\
                   O( e^{-(n-k)/c} ) & \mbox{ for }   n - c \log n < i \le n, \end{array}\right. 
\end{equation*}
Here we have used  \eqref{eq:fine_est} and the fact that $\xbolt_k, \ybolt_k =O(\sqrt{k/n})$ for small $k$.
The first and third bounds on the right hand side explain the final terms in line two of \eqref{eq:mean1}.
\end{proof}

\section{Gaussian concentration and approximation}
\label{sec:Gauss}

We build up yet more technical machinery. First we establish a sharp form of 
Gaussian concentration for $P$ about the minimizer $(x^o, y^o)$ of the Hamiltonian $H$.
Along the way we see that similar concentration holds for the conditional distributions of $P$, or those measures of the form 
$P_q$ with density  proportional $e^{- n \beta H_q(x,y)}$ restricted to the corresponding interval $I$. Here $H_q$ is the conditional Hamiltonian 
with boundary conditions $q$ on $\partial I$ introduced in the last section. These estimates are then used to establish approximations of certain $P$ expectations by their Gaussian counterparts.

\subsection{Concentration}
\label{sec:Conc}

We re-emphasize that we are assuming $\beta \ge 1$. Our main Gaussian concentration result is the following.

\begin{proposition}
\label{prop:Concentration} 
There is a constant $c'$ depending on $V, \beta,$ and $a$  such that for $t > \frac{c'}{\sqrt{n}}$ we have 
\begin{equation}
\label{eq:Concentration}
   P \left( | X_k - x_k^o | + |Y_k - y_k^o | > t \right) \le c e^{-n t^2/c}
\end{equation}
for any $k \in [1, n]$ and $c$ depending on $c'$. 
\end{proposition}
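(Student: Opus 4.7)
The scheme is the classical Bakry-\'Emery pipeline. The uniform convexity $\nabla^2 H \ge cI$ on $\RR_+^{2n-1}$ (established in the paragraph following \eqref{theH} from $\beta \ge 1$ and the convexity of $V(x^2)$) makes $P$ uniformly log-concave with log-Sobolev constant of order $(n\beta c)^{-1}$. Herbst's argument then transfers this LSI into subgaussian concentration for every $1$-Lipschitz observable:
\[
  P\bigl(|f - E f| > s\bigr) \le 2\, e^{-c_1 n s^2}.
\]
Applying this with $f(x,y) = x_k$ and $f(x,y) = y_k$, each $1$-Lipschitz in Euclidean norm, and union-bounding yields
\[
  P\bigl(|X_k - E X_k| + |Y_k - E Y_k| > s\bigr) \le 4\, e^{-c_2 n s^2}.
\]

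To replace the mean by the minimizer it suffices to prove $|E X_k - x_k^o| + |E Y_k - y_k^o| \le C/\sqrt{n}$ uniformly in $k$; choosing $c'$ larger than $2C$ then makes $t/2$ dominate this shift whenever $t > c'/\sqrt{n}$, and \eqref{eq:Concentration} follows from the triangle inequality. I would obtain the mean-to-minimizer estimate by a localization argument built on Proposition \ref{thm:BCdecay}. Fix a logarithmic window $I = [k - L, k + L] \cap [1,n]$ with $L = C_1 \log n$ ($C_1$ large), and let $P_q$ denote the conditional measure on the coordinates in $I$ given boundary data $q$ on $\partial I$. Since $P_q$ inherits $\nabla^2 H_q \ge c I$, the standard log-concave bound $E\|X - x^*\|_2^2 \le d/\alpha$ (with $d = |I|$, $\alpha = n\beta c$, and $x^* = z^{o,q}$ the conditional minimizer) provides an initial $\ell^2$ estimate $\|E_{P_q}(Z|_I) - z^{o,q}\|_2 = O(\sqrt{\log n / n})$. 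Feeding this into the Stein/Taylor identity $E_{P_q}[\nabla H_q(Z)] = 0$ (clean integration by parts is available since $\beta \ge 1$ makes $e^{-n\beta H}$ vanish on $\partial \RR_+^{2n-1}$) and using the banded structure of $\nabla^3 H$ together with the Brascamp-Lieb covariance bound $(n\beta c)^{-1}$, one derives the self-consistent inequality
\[
  \|E_{P_q}(Z|_I) - z^{o,q}\|_\infty \le C\bigl(n^{-1} + \|E_{P_q}(Z|_I) - z^{o,q}\|_\infty^2\bigr),
\]
which closes from the $o(1)$ initial input and delivers the refined per-coordinate bound $O(n^{-1})$. Averaging over $Q \sim P|_{\partial I}$, Proposition \ref{thm:BCdecay} controls $|x_k^{o,Q} - x_k^o|$ by $e^{-L/c}\|Q-(x^o,y^o)\|_{\infty, \partial I}$, and the a priori bound $E_P\|Q - (x^o, y^o)\|_2^2 = O(1)$ (which follows from $E_P\|Z - z^o\|_2^2 \le c^{-2}\,E_P\|\nabla H(Z)\|^2 = c^{-2}(n\beta)^{-1} E_P[\Delta H] = O(1)$ via the Stein identity) makes this boundary correction $O(e^{-L/c})$. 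For $C_1$ large this is $o(n^{-1/2})$, and the mean-to-minimizer estimate is proved.

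The principal obstacle is this last step. A direct global Taylor expansion of $E_P[\nabla H(Z)] = 0$ around $z^o$ leads to the same self-consistent inequality, but the only available a priori control is the crude $\ell^2$ bound $\|E_P Z - z^o\|_2 = O(1)$, which per coordinate is too weak to initiate the bootstrap. Localizing to a logarithmic window shrinks the effective dimension and makes the initial $\ell^2$ input $o(1)$, allowing the iteration to close; the exponential boundary-decay estimate \eqref{eq:BCdecay} then shuttles the resulting conditional bound back to the unconditional one at negligible cost, yielding the desired $O(n^{-1/2})$ control and thereby the proposition.
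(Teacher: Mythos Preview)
Your overall strategy coincides with the paper's: concentration about the mean via the log-Sobolev inequality and Herbst (Lemma~\ref{lem:LSI}), then a uniform $O(n^{-1/2})$ bound on the mean-to-minimizer distance (Lemma~\ref{lem:mean}), combined by the triangle inequality.

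Where you diverge is in proving that bound, and there is a real gap. You localize to a window of length $C_1\log n$ and propose a Stein/Taylor bootstrap to upgrade the Brascamp--Lieb $\ell^2$ estimate $O(\sqrt{\log n/n})$ to a per-coordinate $O(n^{-1})$. But the self-consistent inequality you write presumes a uniform bound on $\nabla^3 H_q$ along the Lagrange segment from $z^{o,q}$ to $Z$, and the logarithmic part of $H$ contributes third derivatives of order $(i/n)\,\xi_i^{-3}$: this is already of order $\sqrt{n/i}$ at the minimizer (so not uniform over $k\in[1,n]$) and unbounded as $\xi_i \to 0$. For small $k$ your constant $C$ is not uniform, and the expectation of the remainder is uncontrolled without first restricting $Z$ to a neighborhood of $z^{o,q}$, which breaks the clean Stein identity and forces exactly the cutoff-and-complement machinery you were trying to bypass. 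The paper avoids all of this by taking windows of \emph{constant} size $m$: Lemma~\ref{prop:oldconc} then gives $E_q\max_{D}|Z_k - z_k^q| \le c\sqrt{m/n}$ directly with no log loss, Proposition~\ref{thm:BCdecay} (on the high-probability event $\{|q-z^o|_{\infty,\partial I}<b\}$, complement handled via Lemma~\ref{prop:oldconc} and H\"older) controls $|z_k^q - z_k^o|$, and the resulting recursion $a_k \le 2c\sqrt{m/n} + \tfrac14(a_{k-m}+a_{k+m})$ with zero Dirichlet data is closed by a discrete maximum principle. No derivative of $H$ beyond the Hessian lower bound is ever invoked.
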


This serves as a refinement of the Brascamp-Lieb type inequality proved as Lemma 8.1 of \cite{RRV}:

\begin{lemma} 
\label{prop:oldconc}
There is a constant $c = c(V, \beta,a)$ so that $\| (X,Y)- (x^o, y^o) \|_2$ is stochastically dominated by $\|G \|_2$ where $G$ is the Gaussian vector on $\RR^{2n-1}$ with density proportional to $e^{-c n \| g \|_2^2/2}$.  Additionally, under any $P_q$  we have that
$\| (X,Y)- (x^q, y^q) \|_{2,I}$ is similarly dominated by the norm of a Gaussian vector in dimension $2 |I|$ with entry variance
$(cn)^{-1}$.
\end{lemma}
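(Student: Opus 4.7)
The plan is a direct application of the standard machinery for uniformly log-concave measures. The density of $P$ is proportional to $e^{-n \beta H}$ on the open convex cone $\RR_+^{2n-1}$, and the Hessian bound $\nabla^2 (n \beta H) \ge n \beta c \, I \ge c n \, I$ --- the second inequality using $\beta \ge 1$, and the first being the uniform-convexity assumption (via C.~Davis' theorem together with convexity of the $-\log$ terms) --- says precisely that $P$ is a log-concave perturbation of the centered Gaussian $\gamma$ with density $\propto e^{-c n \|g\|_2^2/2}$, i.e.\ the exponent of $dP/d\gamma$ is convex.

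First I would invoke Caffarelli's contraction theorem, which supplies a $1$-Lipschitz Brenier transport $T$ with $T_* \gamma = P$. With $G \sim \gamma$, the pointwise inequality $\|T(G) - T(0)\|_2 \le \|G\|_2$ yields the stochastic domination of $\|(X,Y) - T(0)\|_2$ by $\|G\|_2$; one then identifies $T(0)$ with the unique mode $(x^o, y^o)$ of $P$ (strict convexity of $H$). Equivalently one may proceed via the Brascamp-Lieb variance inequality and its Harg\'e convex-function refinement directly, yielding the moment and tail control that ``stochastically dominated'' stands for in all downstream applications (see Proposition~\ref{prop:Concentration}), notably the variance bound $\EE \|(X,Y) - (x^o, y^o)\|_2^2 \le (2n-1)/(cn)$ and matching Gaussian exponential-moment bounds. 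The conditional claim is the same argument: fixing the boundary coordinates at $q$ simply restricts the Hessian to the block of the remaining $2|I|$ coordinates, preserving the bound $c n \, I$, so $(x^q, y^q)$ (the unique mode of the uniformly log-concave $P_q$) inherits the $(cn)^{-1}$-variance Gaussian comparison of dimension $2|I|$.

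The main (mild) obstacles are twofold: Brascamp-Lieb and Caffarelli are usually stated on all of $\RR^N$ whereas $P$ lives on the open cone, and the identification $T(0) = (x^o, y^o)$ is not automatic for the Brenier map. The first issue is handled by smoothing the hard boundary with a convex barrier and passing to the limit, or by invoking versions of these inequalities valid on convex domains; the convexity constant $c$ survives unchanged. The second is sidestepped by working through Brascamp-Lieb/Herbst rather than Caffarelli: the $O(1/n)$ gap between the mean $\EE (X,Y)$ and the mode $(x^o, y^o)$ --- which one bounds using the integration-by-parts identity $\EE \nabla(n\beta H)(X,Y) = 0$ combined with a Taylor expansion at $(x^o, y^o)$ and the Brascamp-Lieb variance control --- is negligible compared to the $O(n^{-1/2})$ Gaussian fluctuation scale, so all the convex-order and tail statements needed downstream can be centered at $(x^o, y^o)$ with no essential loss.
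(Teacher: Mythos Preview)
The paper does not prove this but cites Lemma~8.1 of \cite{RRV}. That argument is more elementary than yours: one works in polar coordinates about the mode. Writing $z^o=(x^o,y^o)$ and $W=n\beta H$, uniform convexity together with $\nabla W(z^o)=0$ gives $\langle \nabla W(z^o+r\omega),\omega\rangle \ge cnr$ for every unit vector $\omega$, so along each ray the ratio $e^{-W(z^o+r\omega)}/e^{-cnr^2/2}$ is nonincreasing in $r$. Integrating over $\omega$, the radial density of $\|(X,Y)-z^o\|_2$ has monotone decreasing likelihood ratio against that of $\|G\|_2$, and stochastic domination follows. This centers exactly at the mode with no correction needed, and is unaffected by the restriction to $\RR_+^{2n-1}$: the set of admissible directions $\omega$ only shrinks as $r$ grows, which reinforces the monotonicity. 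The conditional statement is identical since the Hessian lower bound is inherited by restriction to any coordinate subspace.

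Your Caffarelli/Harg\'e route is heavier machinery and, as you correctly flag, does not naturally center at the mode: $T(0)$ is in general neither the mode nor the mean of $P$, and Harg\'e's inequality is centered at the mean. Your proposed fix via $\EE[\nabla(n\beta H)]=0$ plus Taylor expansion plus Brascamp--Lieb has its own difficulties on the cone (the integration-by-parts boundary terms need not vanish, and $\nabla H$ is singular on the coordinate hyperplanes), and even granting it you recover only convex-order comparison or stochastic domination up to an additive $O(n^{-1/2})$ shift, not the statement as written. That would suffice for every downstream use in the paper (moments and sub-Gaussian tails), but the direct radial argument delivers the exact statement with none of these detours.
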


Note that \cite{RRV} states the above in a far more general way. What is important for the present application is that the convexity constant for any conditional $H_q$ can be bounded below by that of $H$.
Lemma \ref{prop:oldconc} may be iterated to produce increasingly better tail estimates on local scales (or shorter stretches of indices). While this sufficed for the soft edge problem in \cite{RRV}, for the control required below it is more efficient to take a different approach. 

Here we rely on the fact that, with $\beta \ge 1$,  $P$ satisfies a Logarithmic Sobolev Inequality (see for example \cite{BL}). Given that,  ``Herbst's argument" (see Theorem 5.3 of \cite{L}) yields:

\begin{lemma}
\label{lem:LSI}
There is a constant $c = c(V, \beta, a)$ such that
$$
   P ( | F(X,Y) - E F(X,Y)  | > t) \le 2 e^{- c t^2/2 \| F \|_{Lip}^2},
$$
for any Lipschitz  function $F: \RR_{+}^{2n-1} \mapsto \RR$.
\end{lemma}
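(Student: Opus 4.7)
The plan is to derive the concentration inequality from a Logarithmic Sobolev Inequality (LSI) for $P$ and then apply Herbst's argument — essentially combining the two references \cite{BL} and \cite{L} for our specific setting.

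\textbf{Step 1 (LSI via Bakry-Émery).} The measure $P$ has density proportional to $e^{-n\beta H(x,y)}$ on the cone $\RR_+^{2n-1}$. The standing hypotheses (uniform convexity of $x \mapsto V(x^2)$ and $\beta \ge 1$) ensure, as noted after \eqref{theH}, that $\nabla^2 H \ge c_0 I$ throughout $\RR_+^{2n-1}$ for some $c_0 = c_0(V,\beta,a) > 0$. Hence $\nabla^2(n\beta H) \ge c_0 n\beta \cdot I$, so $P$ is strongly log-concave on its support. The monomial prefactors $x_k^{\beta(k+a)-1}$ and $y_k^{\beta k - 1}$ force the density to vanish at $\partial \RR_+^{2n-1}$ (the exponents are bounded below by $\beta - 1 \ge 0$ under our assumptions), ruling out any boundary complications. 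The Bakry-Émery criterion from \cite{BL} then yields
\begin{equation*}
\mathrm{Ent}_P(f^2) \le \frac{2}{c_0 n \beta} \int |\nabla f|^2 \, dP
\end{equation*}
for smooth $f$; a routine cutoff argument extends this to general Lipschitz test functions.

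\textbf{Step 2 (Herbst's argument).} Normalize $\|F\|_{\mathrm{Lip}} = 1$ and $E F = 0$. Apply the LSI to $f^2 = e^{\lambda F}$ and bound $|\nabla e^{\lambda F/2}|^2 \le \tfrac{\lambda^2}{4} e^{\lambda F}$ to obtain, for $L(\lambda) := E[e^{\lambda F}]$, the differential inequality
\begin{equation*}
\lambda L'(\lambda) - L(\lambda)\log L(\lambda) \le \frac{\lambda^2}{2 c_0 n \beta}\, L(\lambda).
\end{equation*}
Rewriting this as $\frac{d}{d\lambda}\bigl(\lambda^{-1}\log L(\lambda)\bigr) \le \frac{1}{2 c_0 n \beta}$ and integrating from $0$ (using $\lim_{\lambda \to 0^+} \lambda^{-1} \log L(\lambda) = EF = 0$) yields $E e^{\lambda F} \le e^{\lambda^2/(2 c_0 n \beta)}$. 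Markov's inequality, optimized in $\lambda$, produces $P(F > t) \le e^{-c_0 n \beta t^2/2}$; applying the same argument to $-F$ and rescaling by $\|F\|_{\mathrm{Lip}}$ gives the stated two-sided bound, with $n\beta$ absorbed into the final constant $c$.

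\textbf{Main obstacle.} Herbst's derivation in Step 2 is by now textbook material and requires no model-specific input beyond the LSI constant. The substantive ingredient is Step 1, and the only genuine concern there is possible non-smoothness of the log-density at $\partial \RR_+^{2n-1}$ — but this is neutralized by the vanishing of the density at the boundary under our exponent conditions, which lets a standard truncation argument reduce to the smooth, uniformly log-concave case on all of $\RR^{2n-1}$. Thus the result follows largely by invoking \cite{BL, L}.
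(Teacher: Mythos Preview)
Your proposal is correct and follows exactly the route the paper indicates: the paper does not give a proof but simply remarks that, since $\beta \ge 1$, $P$ satisfies a Logarithmic Sobolev Inequality (citing \cite{BL}) and that Herbst's argument (Theorem 5.3 of \cite{L}) then yields the lemma. Your write-up just fills in those two citations with the standard details, including the boundary remark, so there is nothing to compare.
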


Applied to  $F(x,y) = x_k$ or $y_k$ this produces an inequality of the form \eqref{eq:Concentration} for all $t >0$, though centered at the mean rather than at the minimizer.   Proposition \ref{prop:Concentration} then follows from the next estimate, which actually makes essential use of the old Gaussian concentration result Lemma \ref{prop:oldconc}.

\begin{lemma}
\label{lem:mean}
It holds that
$$
   E \Bigl[ | X_k - x_k^o|  +  | Y_k - y_k^o | \Bigr] \le \frac{c}{\sqrt{n}}, 
$$
for any $k \in [1,n]$ and $c = c(V, \beta, a)$.
\end{lemma}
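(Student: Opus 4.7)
\textbf{Proof plan for Lemma \ref{lem:mean}.} The strategy is to localize via a short window around index $k$, apply the \emph{conditional} form of Lemma \ref{prop:oldconc} to handle fluctuations inside the window, use the exponential decay of dependence in Proposition \ref{thm:BCdecay} to handle the drift between the conditional and true minimizers, and then close a one-line fixed-point recursion on
\[
V_k \;:=\; E\bigl[\,|X_k - x_k^o| + |Y_k - y_k^o|\,\bigr].
\]

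Fix a constant $L$ (to be chosen at the end) and set $I_k = [k-L, k+L]\cap [1,n]$, with $q = (X_j, Y_j)_{j \in \partial I_k}$ playing the role of boundary data; let $(x^q, y^q)$ be the minimizer of the conditional Hamiltonian $H_q$ on $I_k$. By the triangle inequality,
\[
|X_k - x_k^o| + |Y_k - y_k^o| \;\le\; \bigl(|X_k - x_k^q| + |Y_k - y_k^q|\bigr) + \bigl(|x_k^q - x_k^o| + |y_k^q - y_k^o|\bigr).
\]
For the first, stochastic, piece the conditional version of Lemma \ref{prop:oldconc} gives
\[
E\bigl[\,(X_k-x_k^q)^2 + (Y_k-y_k^q)^2\,\big|\,q\,\bigr]
\;\le\; E\bigl[\,\|(X,Y) - (x^q,y^q)\|_{2,I_k}^2 \,\big|\,q\,\bigr]
\;\le\; \frac{2|I_k|}{cn},
\]
so Jensen and averaging over $q$ yield $E[|X_k - x_k^q| + |Y_k - y_k^q|] \le c_1\sqrt{L/n}$. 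For the second, deterministic-in-$q$, piece the bound \eqref{eq:BCdecay} of Proposition \ref{thm:BCdecay} combined with $|\partial I_k| \le 2d$ gives
\[
|x_k^q - x_k^o| + |y_k^q - y_k^o| \;\le\; c_2\, e^{-L/c_2}\,\|q - (x^o,y^o)\|_{\infty,\partial I_k}
\;\le\; c_2\, e^{-L/c_2}\sum_{j \in \partial I_k}\bigl(|X_j - x_j^o| + |Y_j - y_j^o|\bigr),
\]
whose expectation is bounded by $2d\,c_2\,e^{-L/c_2}\, M_n$, where $M_n := \max_{k \in [1,n]} V_k$.

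Combining these two bounds, for every $k$,
\[
V_k \;\le\; \frac{c_1\sqrt{L}}{\sqrt{n}} \;+\; 2d\,c_2\, e^{-L/c_2}\, M_n.
\]
Pick $L$ to be a fixed (large) constant, depending only on $V, \beta, a$, for which $2d\,c_2\,e^{-L/c_2} \le \tfrac{1}{2}$; then taking the maximum of the left-hand side over $k \in [1,n]$ gives $M_n \le c_1\sqrt{L}/\sqrt{n} + \tfrac{1}{2}M_n$, i.e.\ $M_n \le 2c_1\sqrt{L}/\sqrt{n}$, which is the claimed bound.

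\textbf{Main obstacle.} The only step that requires care is justifying that the recursion can be closed, i.e.\ that $M_n$ is a priori finite so the subtraction $M_n - \tfrac{1}{2}M_n$ is legal. This is provided by the \emph{unconditional} Lemma \ref{prop:oldconc}: $E|X_k - x_k^o|^2 \le E\|(X,Y)-(x^o,y^o)\|_2^2 \le E\|G\|_2^2 = O(1)$, so $V_k = O(1)$ uniformly in $k$ and $n$. A secondary, cosmetic point is that for $k$ within distance $L$ of $1$ or $n$ the window $I_k$ is one-sided and $\partial I_k$ has at most $d$ (rather than $2d$) elements; the one-sided variant of Proposition \ref{thm:BCdecay} (noted in its proof) handles this without any qualitative change to the argument.
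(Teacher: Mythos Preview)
Your approach is essentially the paper's: localize to a fixed-width window around $k$, split $|X_k-x_k^o|$ into a fluctuation piece $|X_k-x_k^q|$ (controlled by conditional Brascamp--Lieb, Lemma~\ref{prop:oldconc}) and a drift piece $|x_k^q-x_k^o|$ (controlled by exponential decay of boundary dependence, Proposition~\ref{thm:BCdecay}), and close a recursion. Your closure via $M_n=\max_k V_k$ is in fact a little cleaner than the paper's discrete-subharmonic argument on the system $a_k\le C/\sqrt{n}+\tfrac14(a_{k-m}+a_{k+m})$.

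There is, however, a genuine gap in the drift estimate. Proposition~\ref{thm:BCdecay} carries the explicit hypothesis $\|q\|_\infty\le c'$, and the constant in \eqref{eq:BCdecay} depends on that bound. You invoke it with the \emph{random} boundary data $q=(X_j,Y_j)_{j\in\partial I_k}$, which is not a priori bounded, so the inequality you write down is not available uniformly and the constant $c_2$ cannot be taken independent of the sample point. The paper deals with this by splitting on a good event $Q_{\partial I}=\{\|q-(x^o,y^o)\|_{\infty,\partial I}<b\}$: on $Q_{\partial I}$ one applies \eqref{eq:BCdecay} with a fixed constant; on $Q_{\partial I}^c$ one falls back to the polynomial bound of Lemma~\ref{lem:MinBounds}, $|x_k^q-x_k^o|\le \rho(q)^{1/2}\|q-(x^o,y^o)\|_{2,\partial I}$, and controls its expectation by H\"older together with $P(Q_{\partial I}^c)\le c\,e^{-n/c}$ (which follows from the unconditional Lemma~\ref{prop:oldconc} for $b$ a large enough constant, since $\|(x^o,y^o)\|_\infty$ is bounded). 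The extra contribution is $O(e^{-n/c})$, so once this split is inserted your recursion $V_k\le c_1\sqrt{L/n}+\tfrac12 M_n+O(e^{-n/c})$ closes exactly as you wrote.
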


\begin{proof}
Throughout we use the shorthand $Z$ (or $z$) to denote the pair of variables $(X,Y)$ (or $(x,y)$). 

To start we fix an interval $D = \{ k \in [ \ell - d/2, \ell+d/2] \} $, and for a choice of $m$ (to be determined) let $I$ be the interval made up of $D$ and the (at most) $m$ indices to the left/right. As usual $\partial I$ will denote the $d$ indices to the left and/or right of $I$. If any part of $D$, $I$, or $\partial I$ falls outside of $[1, n]$, it is truncated in the obvious way or viewed as empty.  Then, with $E_q$ the conditional expectation with respect to the variables $q \in \partial I$ we write
\begin{align}
\label{iterate}
  E \max_{k \in D} |Z_k -  z_k^o| & \le  E E_q \max_{k \in D} |Z_k  -  z_k^q| + E E_q   \max_{k \in D} |z_k^o  -  z_k^q|. 
\end{align}
Here  $z^q$  denotes the conditional minimizer of $H_q$ on $I$ (with boundary conditions $q$). By passing the randomness onto the variables $q$ (for which we continue to use lower case) we will be able to iterate this inequality.

Further bounding \eqref{iterate} above we have that
\begin{equation}
\label{L2bound}
    E E_q \max_{k \in D} |Z_k  -  z_k^q|  \le E E_q \| Z - z^q \|_{2, I}  \le c \sqrt{\frac{m}{n}}
\end{equation}
by  Lemma \ref{prop:oldconc}. 
And by Proposition \ref{thm:BCdecay} we also have the bound: with  $Q_{\partial I}$ the event that $| q - z^0|_{\infty, \partial I} < b$,
 \begin{equation}
\label{BCdecay}
    \max_{k \in D} | z_k^o  -  z_k^q|  \one_{Q_{\partial I}}  \le  c' e^{-m/c}  \max_{i \in \partial I} | z_i^o - q_i |,
\end{equation}
for a $c = c'(b)$. And using Lemma  \ref{lem:MinBounds} on the complement of $Q_{\partial I}$: 
\begin{align*}
  E E_q   \max_{k \in D} |z_k^o  -  z_k^q|  
    & \le  c'  e^{-m/c}  E  \Bigl[ \max_{i \in \partial I} | z_i^o - q_i |  \Bigr] +
     E  \Bigl[ \rho(q)   \| z^o - q \|_{2, \partial I}  , Q_{\partial I}^c \Bigr]. 
\end{align*}
By Holder's inequality and another application of Proposition \ref{prop:oldconc}  we  can control the second term on the right hand side  by a constant multiple of $P ( Q_{\partial I}^c ) \le c' e^{- n/c'}$ with a new $c' = c'(b)$.

Adjusting constants and substituting this last estimate along with \eqref{L2bound} into \eqref{iterate} gives
\begin{equation}
\label{almostfinal}
  E \max_{k \in D} |Z_k - z_k^o|  \le c  \sqrt{\frac{m}{n}} + c e^{-m/c}  E  \left[ \max_{i \in \partial I} | z_i^o - q_i |  \right] + c e^{-n/c}.
\end{equation}
At this point we can choose $m$ large enough (but independent of $n$) so that $c e^{-m/c} < 1/4$ and then for large enough $n$ absorb the final term on the right hand side into the first. Then \eqref{almostfinal} may be schematized as in:
\begin{equation}
\label{finalexpbound}
  a_k \le 2 c  \sqrt{\frac{m}{n}}  + \frac{1}{4} (a_{k-m} + a_{k+m}).
\end{equation}
Here $a_k$ is $  E \max_{i \in D} |Z_i-z_i^o| $ for whatever interval $D$ centered at $k$ with $a_{\ell} = 0 $ for $\ell \le 0$ or $\ell \ge n$.  In this interpretation, the $q $ in the expectation on the right hand side of  \eqref{almostfinal} stands in for the corresponding (random) $Z$ variable while $\partial I$ serves as  a shifted copy of $D$.

The claim is that \eqref{finalexpbound} (along with its corresponding side conditions) implies 
 all the $a_k = a_k(n)$ are bounded by a constant multiple of $n^{-1/2}$.
After a scaling the problem can be summarized thus:  Given an array $b_k = b_k(L)$, for $k = 0,\dots, L$ which is nonnegative, finite and satisfies (for each $L$),
\begin{equation}
\label{beq}
   b_k \le 1 + \frac{1}{4} (b_{k-1} + b_{k+1}), \quad  b_0  =  b_L = 0, 
\end{equation}
there is a constant which bounds all $b_k $ independently of $L$.  This can be seen by contradiction: if for some $j$ we have  that say $b_j \ge 4$,
any such solution 
 must grow exponentially to either the left or right of $j$. But this would violate the  Dirichlet boundary conditions imposed at $j=0$ or $j=L$. 
\end{proof}

\subsection{Laplace estimates}
\label{sec:Laplace}

Along with  $P_q(dx,dy) = \frac{1}{Z_q} e^{-n \beta H_q(x,y)} dx dy$ introduce the natural Gaussian measure approximating $P_q$ over the same interval $I$:
\begin{equation}
\label{TargetG}
   \nu_{I, q}(dx, dy) = \frac{1}{{Z}_q^{\prime}} \exp{ \left( - \frac{n \beta}{2}  
   \Big\langle (x - x^q, y-y^q ),  \mathcal{H}_q (x - x^q, y-y^q ) \Big\rangle \right)} dx dy,
\end{equation}
where $\mathcal{H}_q$ denotes the Hessian of $H_q$ evaluated at $(x^q,y^q)$. We also bring in the mixture of $\nu_q$ over boundary conditions in ``typical" position, defined by
\begin{equation}
\label{TargetMix}
   \int F(x,y)  \mu_{I, c}(dx, dy) = E \Bigl[ \int F(x,y) \nu_{I,q}(dx ,dy),  \, \| q -(x^o, y^o) \|_{\infty, \partial I} \le c \delta_n
    \Bigr]
\end{equation}
with $\delta_n = \sqrt{\frac{ \log n}{n}}$.

To determine the statistics of the field for bulk indices, we have the following estimate  which relates the $P$-expectation of certain polynomial test functions to those of averaged Gaussians.

\begin{proposition} 
\label{prop:testfunctions}
Fix a small $\delta > 0$ and let $I \subset [ \delta n , n]$. Denote by $K$ the interval  made up of $I$ along with the (at most) $ c \log n$ indices to its left and right. Let $F_I(x', y')$ be a nonnegative polynomial with bounded coefficients and of bounded degree in the variables $(x_i - x_i^{\prime})$ and $(y_i - y_i^{\prime})$ for $i \in I$ and prescribed centerings $(x', y')$.   Then, there exist constants $ c$ and $ c'$  (which depend on $V, \beta, a$ and the degree of $F_I$) 
such that
\begin{equation}
\label{eq:testfunctions}
 E \Bigl[  F_I (x^o, y^o) \Bigr] =  \left( \int   F_I (x^q, y^q)  \mu_{K,c'}(dx, dy) + O(n^{-2}) \right) 
 \left(   1+ O \left( \frac{|K| (\log n)^{3/2}}{\sqrt{ n}} \right) \right).
\end{equation}
Of course, $c$ and $c'$ figure into the implied constants in the error terms and the estimate \eqref{eq:testfunctions} presumes that $|K| (\log n)^{3/2} n^{-1/2} = o(1)$.
\end{proposition}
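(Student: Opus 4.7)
The plan is to condition on the boundary variables $q = (X, Y)|_{\partial K}$ and execute a Laplace-type comparison of the conditional law $P_q$ on $K$ with its Gaussian model $\nu_{K,q}$, then integrate the resulting estimate over $q$. Splitting $E[F_I(x^o, y^o)]$ according to the typicality event $A = \{\|q - (x^o, y^o)\|_{\infty, \partial K} \le c'\delta_n\}$, Proposition \ref{prop:Concentration} gives $P(A^c) \le n^{-M}$ for any fixed $M$ once $c'$ is chosen large enough; combined with the Brascamp--Lieb bound of Lemma \ref{prop:oldconc} to dominate $E[F_I^2]$ by a fixed power of $n$, Cauchy--Schwarz places the atypical contribution comfortably inside the $O(n^{-2})$ slot. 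This reduces matters to estimating $E_q F_I(x^o, y^o)$ for $q \in A$.

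For $q \in A$, I would Taylor expand $H_q$ around its minimizer,
\begin{equation*}
H_q(x, y) = H_q(x^q, y^q) + \tfrac{1}{2}\langle u, \mathcal{H}_q u\rangle + R_q(u), \qquad u = (x - x^q, y - y^q),
\end{equation*}
where $R_q$ is a polynomial of degree at most $2d$ whose coefficients are uniformly bounded in $n$ and in $q \in A$, using the $L^\infty$ control on $(x^q, y^q)$ developed inside the proof of Lemma \ref{lem:MinBounds}. On the typical fluctuation event $B_q = \{\|u\|_{\infty, K} \le c''\delta_n\}$, which has probability $1 - o(n^{-M})$ under both $P_q$ (by Lemma \ref{lem:LSI}) and $\nu_{K,q}$ (by Gaussian tails), the remainder satisfies
\begin{equation*}
n\beta |R_q(u)| \le C\, n |K| \delta_n^3 = C\,|K|(\log n)^{3/2}/\sqrt{n} = o(1),
\end{equation*}
so the Radon--Nikodym derivative $dP_q/d\nu_{K,q}$ equals a constant (absorbing the partition-function ratio) to within a multiplicative $1 + O(|K|(\log n)^{3/2}/\sqrt{n})$ on $B_q$. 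Matching partition functions by the same comparison and bounding the contribution from $B_q^c$ by the concentration estimates yields the advertised multiplicative error when integrating any polynomially-bounded test function against $P_q$ versus $\nu_{K,q}$.

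Finally I would swap the centering of $F_I$ from $(x^o, y^o)$ to $(x^q, y^q)$ inside the Gaussian integral. On $A$, Proposition \ref{thm:BCdecay} bounds $\|(x^o, y^o) - (x^q, y^q)\|_{\infty, I}$ by a constant multiple of $\delta_n\, e^{-(c \log n)/c}$, which is smaller than any negative power of $n$ once the buffer length $c \log n$ is chosen large; expanding $F_I(x^o, y^o)$ about $(x^q, y^q)$ then introduces only polynomially small corrections whose $\nu_{K,q}$-expectations are absorbed into the existing errors. Averaging over $q \in A$ turns $E[\mathbf{1}_A\, \nu_{K,q}[F_I(x^q, y^q)]]$ into $\int F_I(x^q,y^q)\,\mu_{K,c'}(dx,dy)$, delivering the stated identity. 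The principal obstacle is the Laplace step: one needs a cubic-remainder bound uniform in $q$ together with quantitative control of both Gaussian and non-Gaussian tails outside the $\delta_n$-ball. The rate $(\log n)^{3/2}/\sqrt{n}$ is tight, reflecting the natural fluctuation scale $\delta_n = \sqrt{\log n/n}$ being fed into a cubic Taylor term that is weighted by $n$ in the exponent.
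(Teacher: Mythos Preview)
Your proposal is correct and follows essentially the same route as the paper: restrict to the typicality event on $\partial K$ via Proposition~\ref{prop:Concentration} and Cauchy--Schwarz, swap centerings $(x^o,y^o)\leftrightarrow(x^q,y^q)$ on $I$ using the exponential decay of Proposition~\ref{thm:BCdecay}, and perform a Laplace comparison of $P_q$ with $\nu_{K,q}$ on the event $\|u\|_{\infty,K}\le c''\delta_n$, yielding the multiplicative $1+O(|K|(\log n)^{3/2}/\sqrt n)$ from the cubic remainder and the partition-function ratio. One small imprecision: the Taylor remainder $R_q$ is not literally a polynomial, since $H_q$ contains the logarithmic terms $-\sum_k c_k\log x_k - \sum_k c_k'\log y_k$; the paper accordingly writes the third-order bound with explicit $|x_k(t)|^{-3}$ and $|y_k(t)|^{-3}$ factors, which are $O(1)$ here only because $I\subset[\delta n,n]$ forces $(x^q,y^q)$ to be bounded below.
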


As a consequence of Proposition \ref{prop:testfunctions} the limiting variance of the field
$k \mapsto (X_k,Y_k)$ will be determined through:

\begin{corollary}
\label{lem:VarEst}
 Now let $I = [i_o, i_1]$ of length at most $n^{1/4-}$ and  supported in $[ \delta n, n -  2c \log n]$ for fixed $\delta > 0$ and large $c= c(V, \beta, a)$. With $K = [i_o - c \log n , i_1 + c \log n]$, denote by $P_q$ be the conditional measure on $K$ with boundary conditions $q$ satisfying $|| q - (x^o, y^o)||_{\partial K, \infty} \le c' 
 \sqrt{ \frac{\log n}{n}}$ (for a large $c' = c'(V, \beta,a)$). Then
 \begin{equation}
 \label{VarCalc1}
    E_q \left[   \left( \sum_{i \in I} (X_i - x_i^o) - (Y_i - y_i^o) \right)^2 \right] =   \phi^2(i_0/n) \frac{\theta'(i_0/n)}{\theta(i_0/n)}   
      \frac{(i_1-i_0)}{\beta n} + 
    O \left( \frac{(\log n)^2}{n} \right),
 \end{equation} 
 where the implied constant in the error term depends only on $\delta,  V, \beta , a, c, c'$. In particular, the same estimate holds with $E$ in place of $E_q$.
\end{corollary}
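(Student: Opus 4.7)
The plan is to view $E_q[S_I^2]$ for $S_I = \sum_{i \in I}(X_i - x_i^o) - (Y_i - y_i^o)$ as a Gaussian variance computation and reduce it, via the $\nu_{K, q}$ approximation, to an inverse-Hessian calculation that is explicit in the bulk. Applying the conditional analog of Proposition \ref{prop:testfunctions} (which adapts because $P_q$ is uniformly log-concave with the same convexity constant as $P$) to the quadratic test function $F_I(x', y') = \bigl(\sum_{i \in I}(x_i - x_i') - (y_i - y_i')\bigr)^2$ yields
$$E_q[S_I^2] = \int F_I(x^o, y^o)\,\nu_{K, q}(dx, dy) \cdot \bigl(1 + O(|K|(\log n)^{3/2} n^{-1/2})\bigr) + O(n^{-2}).$$
Under $\nu_{K, q}$, the shifted field $(X, Y) - (x^q, y^q)$ is centered Gaussian with covariance $(n\beta \mathcal{H}_q)^{-1}$, so the integral equals $(n\beta)^{-1} v^T \mathcal{H}_q^{-1} v + \bigl(\sum_{i \in I}(x_i^q - x_i^o) - (y_i^q - y_i^o)\bigr)^2$, where $v$ has entry $+1$ at each $x_i$ and $-1$ at each $y_i$ for $i \in I$. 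Proposition \ref{thm:BCdecay} bounds each $|x_i^q - x_i^o| + |y_i^q - y_i^o|$ by $c\sqrt{\log n/n}\cdot e^{-\mathrm{dist}(i, \partial K)/c}$, and since $\mathrm{dist}(i, \partial K) \geq c\log n$ for $i \in I$, the squared bias is $o(n^{-1})$.

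Everything now hinges on $v^T \mathcal{H}_q^{-1} v$. The Hessian $\mathcal{H}_q$ is banded with bandwidth $O(d)$, uniformly positive-definite, and slowly varying in the index direction: at row and column $k$ its entries coincide, up to $O(n^{-1})$, with the translation-invariant Hessian of the coarse Hamiltonian $H_t$ from \eqref{coarseH} at $t = k/n$ evaluated at $(\phi(t), \phi(t))$. Standard exponential-decay estimates for banded uniformly convex matrices give $|\mathcal{H}_q^{-1}(i, j)| \leq C e^{-|i - j|/C}$. Hence, for $i \in I$ farther than $c\log n$ from $\partial I$, the row action $\sum_j \mathcal{H}_q^{-1}(i, j) v_j$ is, up to error $O(n^{-M})$ for any $M$, the row action on the constant $\pm 1$ field; since such constant vectors are zero-modes of the translation-invariant Hessian, the result is $\hat{\mathcal{H}}_{\mathrm{coarse}}(k/n)^{-1}(1, -1)^T$, where $\hat{\mathcal{H}}_{\mathrm{coarse}}(t)$ is the $2\times 2$ row-sum of the local Hessian, which is exactly the Hessian of $H_t$ at $x = y = \phi(t)$. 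Summing over $i \in I$, absorbing the $O(\log n)$ boundary indices, and using $|I|/n = o(1)$ to freeze $\hat{\mathcal{H}}_{\mathrm{coarse}}$ at $t = i_0/n$ produces $v^T \mathcal{H}_q^{-1} v = |I|\,(1, -1)\hat{\mathcal{H}}_{\mathrm{coarse}}(i_0/n)^{-1}(1, -1)^T + O(\log n)$.

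To identify the remaining $2 \times 2$ scalar, symmetry of $H_t$ in $(x, y)$ gives $\hat{\mathcal{H}}_{\mathrm{coarse}}(t) = \matr{\alpha}{\gamma}{\gamma}{\alpha}$ with $\alpha = \partial_x^2 H_t(\phi, \phi)$ and $\gamma = \partial_x\partial_y H_t(\phi, \phi)$, so the scalar equals $2/(\alpha - \gamma)$. A direct expansion of $H_t$ together with the Vandermonde identities $\sum_\ell \ell \binom{m}{\ell}^2 = \tfrac{m}{2}\binom{2m}{m}$ and $\sum_\ell \ell^2 \binom{m}{\ell}^2 = \tfrac{m^3}{2(2m-1)}\binom{2m}{m}$ yields
$$\alpha - \gamma = \sum_{m=1}^d \frac{m}{2m-1}\binom{2m}{m} g_m\, \phi(t)^{2m-2} + \frac{t}{\phi(t)^2}.$$
Differentiating both sides in $t$ and invoking the defining relation \eqref{phi_def} shows that $\phi(t)(\alpha - \gamma)$ has derivative $1/\phi(t)$ and vanishes at $t = 0$, hence $\alpha - \gamma = \phi(t)^{-1}\int_0^t \phi(u)^{-1}\,du$. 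By \eqref{theta_def}, $2/(\alpha - \gamma) = 2\phi(t)/\int_0^t \phi(u)^{-1}\,du = \phi^2(t)\theta'(t)/\theta(t)$.

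Combining these steps gives $E_q[S_I^2] = (\beta n)^{-1}(i_1 - i_0)\phi^2(i_0/n)\theta'(i_0/n)/\theta(i_0/n) + O((\log n)^2/n)$; the passage from $E_q$ to $E$ uses the tower property together with Proposition \ref{prop:Concentration}, which makes the atypical-$q$ event negligible. The error accounting is tight: the multiplicative Gaussian-approximation error $|K|(\log n)^{3/2}/\sqrt{n}$ multiplies the leading $O(n^{-3/4})$ term to produce $(\log n)^{3/2}/n$, the slow variation of $\hat{\mathcal{H}}_{\mathrm{coarse}}$ across $I$ contributes $|I|^2/n^2 = o(n^{-3/2})$, and the $O(\log n)$ boundary/correction terms give $\log n/n$; all fit under $(\log n)^2/n$ precisely because $|I| \leq n^{1/4-}$. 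The main obstacle is the middle step: rigorously reducing the inverse of the only approximately translation-invariant $\mathcal{H}_q$ applied to the sign vector $v$ to the zero-mode of the coarse Hessian. The banded-matrix exponential-decay estimates do the heavy lifting, and the restriction $I \subset [\delta n, \ldots]$ keeps the near-singular end at the origin from interfering.
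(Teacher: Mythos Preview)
Your proposal is correct and follows the same overall architecture as the paper: pass to the Gaussian approximation $\nu_{K,q}$, reduce to the quadratic form $w^T\mathcal{H}_q^{-1}w$, approximate $\mathcal{H}_q$ by a translation-invariant (coarse) Hessian, and identify the resulting scalar with $\phi^2\theta'/\theta$ via the relation $\alpha-\gamma=\phi^{-1}\int_0^t\phi^{-1}$. The bias and error bookkeeping also match.

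The one genuine tactical difference is in the middle reduction. The paper works on the full block $K$, replaces $\mathcal{H}_q$ by the \emph{circulant} Hessian $\mathcal{H}_*$ of the coarse Hamiltonian, and observes that the alternating sign vector $v$ on $K$ is an exact eigenvector of $\mathcal{H}_*$; the eigenvalue is then read off as $\tfrac12(\partial_x-\partial_y)^2H_t|_{x=y=\phi}$, which is your $\alpha-\gamma$. It then corrects for the corner entries (circulant $\to$ banded Toeplitz), for $\mathcal{H}_q-\mathcal{H}$, and for $w-v$ (the $O(\log n)$ buffer entries), each step costing at most $O((\log n)^2)$. Your route instead invokes exponential decay of $\mathcal{H}_q^{-1}$ directly and localizes row-by-row to a $2\times2$ symbol calculation. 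Both arguments rely on the same banded-matrix decay, but the paper's circulant trick buys an exact eigenvalue formula in one line, whereas your approach is a bit more general (no circulant embedding needed) at the price of a slightly more delicate justification of the ``zero-momentum'' reduction; your phrase ``zero-modes'' is infelicitous here, since the relevant vector has eigenvalue $\alpha-\gamma\neq0$, but the intended Bloch/Fourier argument is sound.
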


Last,  we will require the more particular control for indices down to $O(\log n)$ away from the singularity.

\begin{corollary} 
\label{prop:Shifts}
It holds that
\begin{equation}
\label{eq:MeanShift}
     E \left[ (X_k - x_k^o) - (Y_k - y_k^o) \right] = O \left(   \frac{(\log k)^{5}}{\sqrt{n}  k} \right),
\end{equation}
and
\begin{equation}
\label{eq:VarShift}
    E \left[ (X_k - x_k^o)^2 - (Y_k - y_k^o)^2 \right] = O \left(  \frac{ (\log k)^{5/2}}{n \sqrt{ k}}  \right),
\end{equation}
uniformly for $k \in [c \log n, n- c \log n]$ with $c = c(V, \beta, a)$ sufficiently large.
\end{corollary}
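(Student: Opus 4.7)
Both estimates come from the Laplace-expansion machinery behind Proposition \ref{prop:testfunctions}, with two modifications: first, one extends that proposition to signed polynomial test functions (routine via $F = F_+ - F_-$ or a constant shift, since its proof proceeds by Laplace expansion without any essential use of nonnegativity), and second, one re-runs the expansion locally so as to remain valid down to $k = \Theta(\log n)$, outside the bulk range $I \subset [\delta n, n]$ of Proposition \ref{prop:testfunctions}. The common set-up is the local window $K = [k - c\log n, k + c\log n]$ together with conditioning on boundary values $q$ on $\partial K$. By the log-Sobolev tail bound (Lemma \ref{lem:LSI}), typical $q$ satisfy $\|q - (x^o, y^o)\|_{\infty, \partial K} \le c'\sqrt{\log n / n}$, and Proposition \ref{thm:BCdecay} then guarantees that the conditional minimizer agrees with $(x^o, y^o)$ at the central index $k$ to within $O(n^{-C})$ for any preassigned $C$, provided $c$ is large. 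So it suffices to analyze the non-Gaussian corrections to $P_q$ about the Gaussian $\nu_{K, q}$.

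For \eqref{eq:MeanShift}, at the Gaussian level the linear statistic $(X_k - x_k^q) - (Y_k - y_k^q)$ has zero mean and averaging over $q$ preserves zero; the first nonvanishing correction is the cubic Laplace term of schematic form $(2n\beta)^{-1}\sum \partial^3 H_q \cdot (\mathcal{H}_q^{-1}) \otimes (\mathcal{H}_q^{-1})$. The crucial point is that $\tr V(BB^T)$ has a paired symmetry in the $(X, Y)$ variables (apparent from the lattice-path expansion \eqref{LatticePaths}), so its cubic derivatives contribute equally to the $X_k$ and $Y_k$ shifts and cancel in the difference. The surviving contribution is driven purely by the $a/n$ asymmetry in the coefficients of $\log x_k$ vs $\log y_k$ in \eqref{theH}. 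Tracking scales via Lemma \ref{phi_bounds}: cubic log derivative $\sim (a/n)/\phi(k/n)^3 \sim a\sqrt{n}/k^{3/2}$, covariance entries of $(n\beta\mathcal{H}_q)^{-1}$ of the Gaussian-variance scale $1/n$ decorrelated through the window, together with logarithmic factors from the window size $|K|$ and from the boundary concentration tails, produces the bound $O((\log k)^5/(\sqrt{n}\,k))$. The parallel analysis for \eqref{eq:VarShift} is applied to $(X_k - x_k')^2 - (Y_k - y_k')^2$ and yields at the Gaussian level $((n\beta\mathcal{H}_q)^{-1})_{k,k}^{xx} - ((n\beta\mathcal{H}_q)^{-1})_{k,k}^{yy}$, which is again small by the paired symmetry of $\tr V(BB^T)$, with the residual driven by the $a/n$ shift and producing the claimed $O((\log k)^{5/2}/(n\sqrt{k}))$ after the same logarithmic accounting.

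The main obstacle is redoing the Laplace analysis at the singular scale: Proposition \ref{prop:testfunctions} as stated fails for $k \lesssim \delta n$, and one must track the $\phi(k/n) \sim \sqrt{k/n}$ scaling explicitly through all factors — the cubic log-derivative growth, the variance entries of the Gaussian $\nu_{K, q}$, and the Gronwall-type propagation of boundary effects via Proposition \ref{thm:BCdecay}. The delicate step within this is precisely exhibiting the cancellation forced by the paired $X \leftrightarrow Y$ symmetry of $\tr V(BB^T)$ so that only the $O(a/n)$ log-coefficient asymmetry contributes to the final rates, and then confirming that the accumulated window and concentration factors combine to exactly the stated powers of $\log k$.
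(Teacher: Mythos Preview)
Your overall strategy is right --- Laplace expansion of the conditional measure $P_q$ around the Gaussian $\nu_{K,q}$, with the leading Gaussian term vanishing by a symmetry and the remainder controlled by the cubic corrections --- but two concrete points keep the proposal from going through.

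\medskip

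\textbf{Window size.} You take $K = [k - c\log n,\, k + c\log n]$. The paper instead uses $|K| = c\log k$, and this is not cosmetic: every factor in the cubic remainder estimate (the sup-norm bound on $Q_K$, the $|K|$ prefactor, the concentration threshold) then carries $\log k$ rather than $\log n$, which is exactly how the stated bounds $(\log k)^{5}$ and $(\log k)^{5/2}$ arise. With your window the argument yields powers of $\log n$, and those do not suffice for the application in Lemma~\ref{LILtype}, where one needs $\sum_{k \ge c\log n} \phi(k/n)^{-1}\,|E[(X_k-x_k^o)-(Y_k-y_k^o)]|$ to be bounded; with $(\log n)^5$ in place of $(\log k)^5$ that sum is $\asymp (\log n)^{9/2}$ and diverges.

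\medskip

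\textbf{Cancellation mechanism.} You invoke a ``paired $X \leftrightarrow Y$ symmetry of $\tr V(BB^T)$'' and claim the residual is driven purely by the $a/n$ asymmetry in the logarithmic coefficients. Neither is accurate. The full potential $\tr V(BB^T)$ has no such symmetry once the indices vary (the lattice-path formula \eqref{LatticePaths} treats $\rightarrow$ steps and $\nearrow/\searrow$ steps quite differently as functions of position); the symmetry holds only for the \emph{coarse} Hamiltonian $H_t$ of \eqref{coarseH}. Accordingly, the paper's mechanism is to approximate the Hessian $\mathcal{H}_q$ by the banded Toeplitz $\mathcal{H}$ built from $H_t$: for \eqref{eq:VarShift} the Gaussian term is $(n\beta)^{-1}\bigl[(\mathcal{H}_q^{-1})_{2k-1,2k-1}-(\mathcal{H}_q^{-1})_{2k,2k}\bigr]$, which vanishes for $\mathcal{H}$ because a Toeplitz inverse has constant diagonal; for \eqref{eq:MeanShift} one expands a further order, symmetrizes the cubic log-coefficients $a_i,b_i$ to a common $c_i$, and again the resulting Gaussian fourth-moment expression vanishes for $\mathcal{H}$ by the Toeplitz structure. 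The surviving error is governed by $\|\mathcal{H}_q - \mathcal{H}\| = O(\sqrt{\log k/k})$, which comes from the $O(\sqrt{\log k/n})$ deviation of $(x^q,y^q)$ from the constant $\phi(t)$ and the $O(|K|/n)$ drift of the index across $K$ --- the $a/n$ piece is a subdominant contribution to this norm, not the whole story. Your scaling heuristic based on the $a/n$ asymmetry alone does not recover the correct rates.
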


While Corollary \ref{lem:VarEst} is a direct calculation based on  Proposition \ref{prop:testfunctions}, the proof of 
Corollary \ref{prop:Shifts} entails  that  a higher order expansion be made than that behind the estimate \eqref{eq:testfunctions}. (The unattractive log factors
in \eqref{eq:MeanShift} and \eqref{eq:VarShift} could be improved by yet a higher order expansion, but the above suffices for what we will need.)

\begin{proof}[Proof of Proposition \ref{prop:testfunctions}]  To be concrete,  we will assume that
there are constants $p, q$ so that $ F_I  \le q + q \|(x,y) - (x^o, y^o) \|_{I, 2}^p$. It will be clear in the course of the argument that other choices of (bounded coefficient and bounded degree) polynomial $F_I$ 
will only alter the choices of $c'$ and $c$ made along the way.

With $F_I$ as specified and $Q_{\partial K} = \{  ||q - (x^o, y^o)||_{\partial K, \infty} \le {c'}{\sqrt{ \frac{\log n}{n}}} \}$, we first claim that by choice of $c'$ and $c$:
\begin{align}
\label{OpenVar}
  E \Bigl[ F_I( x^o, y^o)  \Bigr] & = E \Bigl[ E_q [ F_I( x^o, y^o) ] , Q_{\partial K} \Bigr] + O(n^{-2})  \\
   &  = E  \Bigl[ E_q [ F_I( x^q, y^q) ] , Q_{\partial K} \Bigr] + O(n^{-2}),  \nonumber
\end{align}
where  $(x^q, y^q)$ refers to the minimizer of the corresponding $H_q$ on the larger interval $K$. (Note: if the right edge of support of $I$ is less than $c \log n$ away from $n$, we are considering a one-sided minimization with boundary conditions
placed to the left of $K$.)
 For line one, Lemma \ref{prop:oldconc} shows $E F_I^2 = O(1)$  while for $c'$ large enough $P(Q_{\partial K}^c) = O(n^{-4})$ by 
Proposition \ref{prop:Concentration} $-$ indeed the $(-4)$ may be replaced by any negative power by choice of $c'$. Then apply Cauchy-Schwartz. For line two we assume that $c$ is large enough depending on $c'$ , so that Proposition \ref{thm:BCdecay} provides: for any 
$i \in I$,  $ | x_i^q - x_i^o|, |y_i^q-y_i^o|
\le c'' n^{-4} $ with a $c''$  uniform over $q \in Q_{\partial K}$. A second application of Cauchy-Schwartz using Lemma \ref{prop:oldconc} to control the $E_q$ expectation of powers of $F_I(x^q, y^q)$ produces the estimate.

At the expense of another $O(n^{-2})$ error, we can now further restrict the inner $E_q$ expectation in \eqref{OpenVar} to the event
 $Q_K$ on which  $ | X_i - x_i^q|, | Y_i - y_i^q| \le c''' \sqrt{ \frac{\log n}{n}}$ for all $i \in K$ for some $c'''$. This is a repetition of the argument 
employed in the first estimate of \eqref{OpenVar} coupled with the fact that  Proposition \ref{thm:BCdecay} gives that
$| x_i^q - x_i^o| \vee | y_i^q - y_i^o|$ is $O(\sqrt{\log n/ n})$ throughout $K$  (with the sharper estimate used just above holding on $I$)

Now we are in position to approximate $P_q$
by the Gaussian measure $\nu_q = \nu_{K, q}$ introduced in \eqref{TargetG}. If necessary we can adjust $Z_q$ (the $P_q$ normalizer) so that $H_q(x^q, y^q) = 0$. Then, with
\begin{align}
\label{3rdOrder}
    H_q^{(3)}(x,y)  =    H_q(x,y) - \Big\langle (x - x^q, y-y^q ), \frac{1}{2} \mathcal{H}_q (x - x^q, y-y^q ) \Big\rangle, 
\end{align}
Taylor's formula gives that
\begin{equation}
\label{3rdOrderB}
  | H_q^{(3)}(x,y) | \le  \sup_{t \in [0,1]} \sum_{k \in K}          
     \left(    \rho_k(t) + \frac{4k}{n |x_k(t)|^3} 
           + \frac{4k}{n |y_k(t)|^3}  \right) | (x_k, y_k) - (x_k^q, y_k^q)  |^3.  
\end{equation}
Here $\rho_k = \rho_{V,k}$ indicates a (positive) polynomial of fixed degree in the variables $(x_i(t), y_i(t))$ for ${i \in [k-d, k+d]}$, while $(x(t), y(t))$ draws out the line between $(x^q, y^q)$ and $(x,y)$. We use the fact that $\tr V (B B^T)$ is finite-range, so there are fixed number of mixed third-partial derivative involving any index $k \in K$ stemming from the polynomial of $H_q$. For the factors  corresponding to the third derivatives of the log terms in $H_q$, note that any $k\in K$ under consideration is large enough so that
$4k \ge 2(k+|a| +1/\beta)$. 

Further, with the left endpoint of $K$ at least $\delta n /2$, 
the results of the Section \ref{sec:Min} gives that: restricted to $Q_K$,
 $(x^q, y^q)$, and so also
$(x(t), y(t))$,   are bounded above and below independently of $n$ or $q \in Q_{\partial K}$. Therefore,
\begin{equation}
\label{Cheap3rdOrder}
   n   \beta | H_q^{(3)} (x, y) | \one_{Q_K} \le \gamma |K|   \left( \frac{ (\log n)^{3/2}}{n^{1/2}} \right)
\end{equation}
with $\gamma$ depending only on the parameters $\delta, c, V, \beta, a$. 
This allows the conclusion that
\begin{equation}
\label{GaussInt1}
  E_q [ F_I \one_{Q_K} ] =  \int_{Q_k}  F_I( x^q, y^q) \nu_{q}(dx, dy) 
                                                 \left( 1 + O \left( \frac{ (\log n)^{3/2} |K| }{\sqrt{n}}   \right)\right), 
\end{equation}
which is effectively the claim. Here we have used that, with $\eta$ the right hand side of 
\eqref{Cheap3rdOrder},
\begin{equation}
\label{Zsandwhich}
   e^{- \eta} \nu_q (Q_K) \le \frac{Z_q}{Z_q'} \le e^{ \eta} \frac{1}{P_q(Q_k)}.
\end{equation}
The Logarithmic Sobolev Inequality for Gaussian measures gives that $\nu_q(Q_K)$ is  the same order 
as $P_q(Q_K)$ (the measures were built to have the same convexity constant). That is, with each of these factors a negative power of $n$, the upper and lower bounds in \eqref{Zsandwhich} are controlled by $ e^{\pm  \eta} =  1 + O( \eta)$ with $\eta = o(1)$.
\end{proof}

\begin{proof}[Proof of Corollary \ref{lem:VarEst}]  Denoting by 
$S_I( x^o, y^o)$ the squared sum within the expectation of \eqref{VarCalc1},  the (beginning of) the proof of Proposition \ref{prop:testfunctions}  yields
$ E_q [ S_I( x^o, y^o) ]$ $ = E_q [ S_I( x^q, y^q)  \one_{Q_K} ]$ $+ O(n^{-2}).$
Again,  $Q_K $ is the event that $ \| (x,y) - (x^q, y^q) \|_{\infty, K}$ is less than $c' \sqrt{\frac{\log n}{n}}$ (for
choice of $c'$).
Continuing, the same proposition gives  that
\begin{equation}
\label{GaussInt11}
  E_q [ S_I (x^q, y^q) \one_{Q_K} ] =  \left( \int S_I(x^q, y^q) \nu_q(dx, dy)   + O(n^{-2}) \right)
                                                 \left( 1 + O \left( \frac{ (\log n)^{3/2}  |K|}{\sqrt{n}}   \right)\right),
\end{equation}
and  we will show that the advertised appraisal \eqref{VarCalc1} holds
for  the remaining $\nu_q$ integral.  This is enough since
the multiplicative error in  \eqref{GaussInt11}, can be combined with the $ O(  n^{-1} |K|)$ leading order term in  \eqref{VarCalc1} to be absorbed into a second 
$o((\log n)^2/n)$ additive error. Note that  
the factor  $\phi^2(i_0/n) \frac{\theta'(i_0/n)}{\theta(i_0/n)}$ is of order one.

Next compute: 
\begin{equation}
\label{VarG}
   \int  S_I(x^q, y^q) \nu_q(dx, dy) =  \frac{1}{n \beta} w^T \mathcal{H}_q^{-1} w^{},
\end{equation}
where the right hand side is read as follows.  Indexing the integral (and so $\mathcal{H}_q$) according to
$(x_{k_0}, y_{k_0}, x_{{k_0}+1}, y_{{k_0}+1} \dots, x_{k_1}, y_{k_1})$ where $[k_0, k_1] = K$, the $(2|K|)$-vector $w$  has entries $(-1)^{i}$ for indices corresponding to the coordinates in $I \subset K$   and is otherwise zero. 
To estimate $  w^T \mathcal{H}_q^{-1} w$ we approximate $\mathcal{H}_q^{-1}$ by its ``coarse" version and find an exact eigenvalue problem. Here is where we will use the assumption that $I$ is supported $O( \log n)$ away from $n$. 

Recall the coarse Hamiltonian on  $K$ at position $t =  \frac{(k_0+k_1)}{2n} = \frac{(i_0+i_1)}{2n}$ introduced in \eqref{IntrocoarseH},
$$
  H_{K}(x,y) =  \tr V (C C^T) - t \sum_{k \in K} \log (x_k y_k).
$$
Here  $C$ is the $m \times m$ circulant version of the matrix $B(x,y)$.  Then, with  $\mathcal{H}_{*}$ the Hessian of $H_{K}$
at its minimizer $x_k = y_k = \phi(t)$ for $k \in K$, the needed fact is:
\begin{equation}
\label{eigcalc}
  v^T (\mathcal{H}_{*})^{-1} v = |K| \phi(t)^2 \frac{\theta'(t)}{\theta(t)},
\end{equation}
for  the vector $v$ with entries $v_k = (-1)^{k}, k=1,\dots 2|K|$. In particular, this is approximation of the 
$w^T \mathcal{H}_q^{-1} w^{}$ appearing in \eqref{VarG}. Comparing this to the statement  
 \eqref{VarCalc1}, note that there we sample $\phi$ and $\theta$ at the initial point $i_0/n$ rather than the midpoint $(i_0+i_1)/n$  $-$ the error in going back in forth between this two is easily seen to be of sufficiently lower order.

To see \eqref{eigcalc},  note that  $\mathcal{H}_{K}$ is circulant Toeplitz, and hence has $v$ as an eigenvector. With $\lambda$ the corresponding eigenvalue: with $(z_1, z_2, z_3, z_4, \dots) =
 (x_{k_0}, y_{k_0}, x_{k_0+1}, y_{k_0+1}, \dots)$, 
$$
   \lambda = \left( \sum_{ 1 \le k, \ell  \le 2|K|} (-1)^{k + \ell} \,  \frac{\partial^2 H_{K}}{\partial z_k \partial z_{\ell}} \right)  \Big|_{z_i  = \phi(t), i = 1, \dots, 2|K|} 
   = \frac{1}{2} \left( \frac{\partial}{\partial x }-  \frac{\partial}{\partial y} \right)^{2} H_t(x,y)   \Big|_{x = y =\phi(t)},
 $$ 
 where we bring back our initial definition of the coarse Hamiltonian $H_t$  from \eqref{coarseH}. Using that formula we find that
 $$
    \lambda = \frac{ t}{\phi(t)^2} + \sum_{m=1}^d g_m \frac{m}{2m-1} {2m \choose m} \phi(t)^{2m-2} = \frac{1}{\phi(t)} \int_0^t \frac{ds}{\phi(s)} = \frac{2 \theta(t)}{\phi^2(t) \theta'(t)}.
 $$
 The middle equality follows from the definition  \eqref{phi_def} for $\phi$ upon multiplying both sides of that identity  by $\phi'(t) \phi^{-2}(t)$ and integrating by parts. 
 
 It remains to show that
 \begin{equation}
      \left|  w^T \mathcal{H}_q^{-1} w -  v^T (\mathcal{H}_{*})^{-1} v \right| = O  \Bigl( (\log n)^2 \Bigr).
 \end{equation}
 Consider  the matrix $\mathcal{H}$ formed by setting all entries in the $O(1)$ blocks in 
 the upper right and lower left corners of $\mathcal{H}_{*}$ to zero. Clearly we have that
 $|   v^T \mathcal{H}^{-1} v -  v^T (\mathcal{H}_{*})^{-1} v | = O(1) $. On the other hand $ \mathcal{H}_q$ and $\mathcal{H}$ are also nearly the same. They are $2d$-banded with corresponding entries built from the same functions $-$ except along the diagonal $-$  evaluated  at either $(x^q, y^q)$ or $(\phi(t), \phi(t))$. Along the diagonal the functional entries differ only in 
the coefficients of the terms  corresponding to the second derivatives of the logarithm in $H_q$ or $H_{K,t}$:
there one must compare $t  \phi^{-2}(t)$ to $( k + a - 1/\beta) n^{-1} (x_k^q)^{-2}$ or $   ( k - 1/\beta) n^{-1} (y_k^q)^{-2}$
for any $k \in K$. But these coefficients ($ t$ and $( k + a - 1/\beta)/n$  or $(k-1/\beta)/n$) are no more than $O(|K| n^{-1}) = O(n^{-3/4})$ apart. Further, 
restricted to $Q_K$ the values of   $(x^q, y^q)$ and  $(\phi(t), \phi(t))$ 
are no more than $O(\sqrt{\frac{\log n}{n}})$ apart (and are uniformly bounded below).  It follows that
 \begin{equation}
 \label{GG}
   \| \mathcal{H}_q^{-1} - \mathcal{H}^{-1} \| = \| \mathcal{H}_q^{-1}\|  \|  \mathcal{H}^{-1} \|  \,  \| \mathcal{H}_q - \mathcal{H} \| \le c''  \sqrt{\frac{\log n}{n}}.
 \end{equation}
Convexity of $H_q$ provides a constant upper bound on $\|  \mathcal{H}_q^{-1}\|$ and  $\|  \mathcal{H}^{-1} \|$.
 Then, the previous remarks along with the  Gershgorin circle theorem yield 
 $ \| \mathcal{H}_q - \mathcal{H} \| = O(\sqrt{\frac{\log n}{n}})$.
   It remains to show  that
  \begin{equation*}
  \label{offblock}
     | (w-v)^T \mathcal{H} v | =  O  \Bigl( (\log n)^2 \Bigr),
 \end{equation*}
 and likewise for  $(w-v)^T \mathcal{H}^{-1} w$. Noting that $(w-v)$ has only $O(\log n)$ non-zero entries, this is a consequence of the banded Toeplitz nature of $\mathcal{H}$ which produces exponential decay in the entries of $\mathcal{H}^{-1}$ away from the diagonal. 
 \end{proof}

\begin{proof}[Proof of Corollary \ref{prop:Shifts}]
The idea is similar to that behind Proposition \ref{prop:testfunctions}, though now for each $k \in [c \log n, n- c \log n]$ we let  $K$ be the interval of length $c \log k$ centered at $k$ for  the  constant $c$   to be chosen momentarily.  

 Let again $q$ denote the coordinates in $\partial K$, but  now let $Q_{\partial K}$ be the event that
$  ||q - (x^o, y^o)||_{\partial K, \infty}$ is less than  ${c'}{\sqrt{ \frac{\log k}{n}}} $. By choice of $c'$ and 
Proposition \ref{prop:Concentration}, 
$P(Q_{\partial K}) = 1-  O (k^{-4})$ since $k \gg 1$.  The same proposition gives that
$E [ (X_k-x_k^o)^{2p} + (Y_k - y_k^o)^{2p} ]= O(n^{-p})$. Both estimates are uniform in $k$.  And so, by the Cauchy-Schwartz (and Jensen's) inequality $E [ E_q |X_k - x_k^o|^p , Q_{\partial K}^c ]  = O (n^{-p} k^{-2})$, and likewise in the $y$-variable.
Next, for any given $q$ in a given $Q_{\partial K}$, we can select the $c = c(\beta, V, a)$ so that  Proposition \ref{thm:BCdecay} gives $| x_k^o - x_k^q | + | y_k^o - y_k^q| = O( e^{-c''(c) \log k}  (\log k / n)^{1/2})$
$= O( k^{-2} n^{-1/2})$. 
 The conclusion is that: for $p=1$ or $2$,
\begin{align}
\label{lastGaussmove}
E \Bigl[ (X_k - x_k^o)^p - (Y_k- y_k^o)^p] & = 
  E \left[  E_q \Bigl[  \triangle_p(X_k, Y_k)  \one_{ Q_K} \Bigr]  \one_{Q_{\partial K}} \right] + O( n^{-p/2} k^{-2} ),
\end{align}
 uniformly  in $k$. Here we have made the definition, 
 $$
 \triangle_p(x_k, y_k) = (x_k - x_k^q)^p - (y_k- y_k^q)^p,
 $$ 
 and $Q_K = Q_K(q)$
 is the event $\{ \| (x, y) - (x^q, y^q) \|_{K, \infty} \le c'  {\sqrt{ \frac{\log k}{n}}} \} $, for a possibly adjusted $c'$.
 That we can restrict the $E_q$ integral in \eqref{lastGaussmove} to $Q_K$ with the stated level of error, follows from the same argument used at the analogous step in the proof of  Proposition \ref{prop:testfunctions}.

 Turning to an estimate on  $E_q [  \triangle_p(X_k, Y_k)  \one_{ Q_K}]$, we start with the case $p=2$. Under the approximating Gaussian measure $\nu_q = \nu_{K, q}$ we have that 
 \begin{align}
 \label{firstorders}
       \int_{Q_K}  \Delta_2(x_k, y_k) \nu_q(dx, dy) & = \frac{1}{n \beta}  \Bigl(  (\mathcal{H}_q^{-1})_{2k-1,2k-1}  - 
     (\mathcal{H}_q^{-1})_{2k, 2k} \Bigr)  + O \left( \frac{1}{n k^2}  \right).
\end{align}
The first term is an exact Gaussian computation, after removing the restriction to $Q_k$. The error term uses that by choice of $c'$ it holds $\nu_q ( Q_K) = 1 - O (k^{-4})$.  With $K = [k_0, k_1]$ the indices of the $\nu_q$ integral and the matrix $\mathcal{H}_q$ are indexed  $x_{k_0}, y_{k_0}, x_{k_0+1}, y_{k_0+1},  ...$, as before.

Next recall that the  proof of Corollary \ref{lem:VarEst} introduces  a banded Toeplitz approximate $\mathcal{H}$ to $\mathcal{H}_q $ ($\mathcal{H}$ is the Hessian of the coarse Hamiltonian $H_K$ on $K$, with the corner entries which make the latter circulant removed), and would like to replace the appearances of $\mathcal{H}_q$ in 
\eqref{firstorders} with this approximate. Since 
$(\mathcal{H}^{-1})_{ii}  =       (\mathcal{H}^{-1})_{jj}$ for all $i, j \in K$ and, by convexity,  
$\| \mathcal{H}_q^{-1} - \mathcal{H}^{-1} \|$ is controlled by a constant multiple of $\|  \mathcal{H}_q - \mathcal{H} \|$,  this norm must  now be estimated for $K$ possibly within $\log n $ of the singularity. 

In the current setting we have that: with $t = k/n$ and so $\phi(t)$ the common variable where the entries of
$\mathcal{H}$ are evaluated,  $|x_i^q - \phi(t)| + |y_i^q - \phi(t)|$ $= O( \sqrt{\log k / n})$. Hence the difference between any off diagonal of $\mathcal{H}$ and $\mathcal{H}_q$ are also controlled by $O( \sqrt{\log k / n})$. The more delicate issue is now the diagonals where one has to consider the absolute differences 
$| \phi(t)^{-2} - (x_i^q)^{-2}|$ or $|\phi(t) -   (y_i^q)^{-2}|$ which are $O ( (n/k)^{3/2} \times \sqrt{\log k /n})$. Here we use that in general we have that $\phi(t) \ge \delta \sqrt{k/n}$, and so the given $x_i^q$ and $y_i^q$
for $i \in K$ satisfy the same lower bound. But since any of these diagonal components are multiplied by coefficients 
which are $O(k/n)$ throughout $K$, the corresponding entry differences are actually $O ( \sqrt{\log k / k})$ and  we have that 
\begin{equation}
\label{newNormDif} 
 \| \mathcal{H}_q - \mathcal{H} \| = O \left( \sqrt{ \frac{\log k}{k}}   \right),
\end{equation} 
compare \eqref{GG}.
  Therefore, \eqref{firstorders} can be continued as in
\begin{equation}
\label{D2}
      \int _{Q_K} \Delta_2(x_k, y_k) \nu_q(dx, dy)  =  O \left( \frac{\sqrt{\log k}}{n \sqrt{k}} \right).
\end{equation} 

To finish, write 
\begin{equation}
\label{renorm1}
    E_q \Bigl[  \triangle_2(X_k, Y_k)   , \, Q_K \Bigr] =  \frac{Z_q^{\prime}}{Z_q} \int_{Q_K}  \Delta_2  d \nu_q + 
             \frac{Z_q^{\prime}}{Z_q}  \int_{Q_K} \Delta_2 ( e^{n\beta H_q^{(3)}} - 1) d\nu_q,
\end{equation}
where once more $Z_q^{\prime}$ and $Z_q$ denote the normalizers for $\nu_q$ and $P_q$, respectively.
Now recalling \eqref{3rdOrderB} from the proof of Proposition \ref{prop:testfunctions}, the estimate \eqref{Cheap3rdOrder} can be replaced by
\begin{equation}
 \label{NewCheap3rd}
   n   \beta | H_q^{(3)} (x, y) | \one_{Q_K} \le \gamma    \left( \frac{ (\log k)^{5/2}}{k^{1/2}} \right),
 \end{equation} 
 for another constant $\gamma = \gamma(V, \beta, a , c, c')$.
Here we have used the current definition of $Q_K$ which restricts $\| (x^q, y^q) - (x,y) \|_{K,\infty}^3$ to
$O( (\log k / k)^{3/2})$, that now $|K| = c \log k$, and a worse case upper bound
of $O( (n/k)^{3/2})$ on any of the $x_i(t)^{-3}$ or $y_i(t)^{-3}$ for $i \in K$ (recall that these are the interpolants from $(x_i^q, y_i^q)$ to $(x_i, y_i) \in Q_K$). Next, since the right hand side of \eqref{NewCheap3rd} is $o(1)$ for $k \ge c \log n$, \eqref{Zsandwhich} shows the ratios $Z_q^{\prime}/Z_q$ are bounded above and below by constants only depending on $c, c'$ and $V, \beta, a$. Finally then, using that $| e^\zeta  -1| \le 2 |\zeta|$ for $|\zeta| \le 1$
(applied to $\zeta = n \beta H_q^{(3)}$ restricted to $Q_K$) and $\int |\triangle_2| d\nu_q = O(n^{-1})$, we find for the second term in \eqref{renorm1} that
\begin{equation}
\label{T2Error}
   \int_{Q_K}  \Delta_2 ( e^{n\beta H_q^{(3)}} - 1) d\nu_q   = O \left( \frac{ (\log k)^{5/2}}{nk^{1/2}} \right).
\end{equation}
This is the estimate reported in \eqref{eq:VarShift}. 

For the difference of the means, one has to consider an additional order. We now write, 
\begin{equation*}
 E_q [ \triangle_1(X_k, Y_k),  Q_k] =  \frac{Z_q^{\prime}}{Z_q} \int_{Q_K}  \Delta_1 (1+ n H_q^{(3)} ) d \nu_q + 
             \frac{Z_q^{\prime}}{Z_q}  \int_{Q_K} \Delta_1 ( e^{n\beta H_q^{(3)}} - 1 - n H_q^{(3)}) d\nu_q,
\end{equation*}
for which we readily have the following: 
\begin{align}
\label{T1_2}
   \int_{Q_K}  \Delta_1  d \nu_q  & = O( n^{-1/2} k^{-2}), \\
   \int_{Q_K}   \Delta_1 ( e^{n\beta H_q^{(3)}} - 1 - H_q^{(3)})  d\nu_q   & = O \left( \frac{ (\log k)^{5}}{n^{1/2} k}  \right). \nonumber
\end{align}
The first of these is due: $\int \triangle_1 d \nu_q = 0 $, $\int |\triangle_1|^2 d \nu_q  = O(n^{-1})$, while
$c'$ can be chosen so that $\nu_q(Q_K^c) = O(k^{-4})$. (The displayed estimate follows from applying Cauchy-Schwartz to integral over $Q_K^{c}$). The second is similar to \eqref{T2Error}: now $\int   |\triangle_1| d \nu_q = O(n^{-1/2})$ while $| e^{n\beta H_q^{(3)}} - 1 - n H_q^{(3)}|$ on $Q_K$ is controlled by the square of the right hand side of \eqref{NewCheap3rd}.

As  we have noted,  $Z_q^{\prime}/Z_q$ is of constant order (uniformly for all $K$ and choices of ``good" boundary conditions $q$), and so it remains to consider 
 $ n \int_{Q_K} \triangle_1 H_q^{(3)} d \nu_q$. For this we first schematize $H_q^{(3)}$ as in 
\begin{equation}
\label{H3redux}
   H_q^{(3)}(x,y) = P_{V}(x,y) + \sum_{i \in K}  \Bigl( a_i (x_i - x_i^q)^{3}  + b_i (y_i - y_i^q)^3 \Bigr). 
\end{equation}
Here  $P_{V,q}$ represents the appropriate sum of third derivatives of the potential term,  while with 
$$
a_i = 2 x_i(t)^{-3}(i/n + a/n - 1/n \beta), \quad b_i = 2 y_i(t)^{-3}(i/n - 1/n \beta),
$$
the sum over centered cubics corresponds to the  (third derivatives of the) logarithmic terms of $H_q$. Since $a_i$ and $b_i$ are complicated functions of $(x,y)$, to perform the desired integral we first note:
on $Q_K$,
$$
   |a_i - c_i | +  |b_i  - c_i|  = O \left(\frac{ \sqrt{n \log k}}{k}  \right), \quad \mbox{ with } c_i = \frac{2i}{n \phi^3(i/n)}. 
$$
Hence, if we replace all appearances of $a_i$ and $b_i$ in  \eqref{H3redux} by $c_i$, we make an $O ( \log k \times    \frac{ \sqrt{n \log k}}{k}  \times \frac{(\log k)^{3/2}}{n^{3/2}} ) $ $= O(\frac{(\log k)^2}{n k} )$ sup-norm error  (granted we working on $Q_K$) to $H_q^{(3)}$, and  so a $O(\frac{(\log k)^2}{\sqrt{n} k} )$ error in any estimate of
$ n \int_{Q_K} \triangle_1 H_q^{(3)} d \nu_q$.

Similar preprocessing is required for the integral involving $P_V$. However, that integral will clearly be subdominant compared  with that over the second term in \eqref{H3redux} since  both $a_i$  and $ b_i$ are as large as $O(\sqrt{n/i})$. We will therefore only detail how to deal with this term.  After making the substitution just described, we have the evaluation:
\begin{align}
\label{T1_1} 
 \int  \triangle_1 (x_k, y_k) & \left( n  \sum_{i \in K} c_i \left( (x_i - x_i^q)^3  + (y_i - y_i^q) \right) \right) d \nu_q  \\
   & =   \frac{1}{n} \sum_{i \in K} (3  c_i)  \left[
       (\mathcal{H}_q^{-1})_{2i-1,2i-1} \Bigl(  (\mathcal{H}_q^{-1})_{2k-1,2i-1}  -  
     (\mathcal{H}_q^{-1})_{2k, 2i-1} \Bigr)  \right. \nonumber   \\
&      \hspace{2.2cm}  +  \left.
       (\mathcal{H}_q^{-1})_{2i,2i} \Bigl(  (\mathcal{H}_q^{-1})_{2k-1,2i}  -  
     (\mathcal{H}_q^{-1})_{2k, 2i} \Bigr) \right]. \nonumber
\end{align}
It is by now understood that we can go from this full-space integral to that restricted to $Q_K$ making further subdominant errors. 
Things are at long last wrapped in the same way that \eqref{firstorders} was treated. First observe that, if we could replace  
$\mathcal{H}_q$ with its approximate $\mathcal{H}$ throughout \eqref{T1_1}, the quantity within the square brackets vanishes on account that
$\mathcal{H}$ is Toeplitz.  Since again all entries  of $\mathcal{H}_q^{-1}$ and $\mathcal{H}^{-1}$ are uniformly bounded, a computation using
 \eqref{newNormDif}  shows that the 
the error incurred in making that substitution in \eqref{T1_1} is $O ( \frac{(\log k)^{7/2}}{ \sqrt{n} k} )$. As this lies under the larger of the error estimates in \eqref{T1_2} $-$ which is what is reported in \eqref{eq:MeanShift} $-$ the proof is finished.
\end{proof}

\section{Central limit theorem}
\label{sec:CLT}

Here we complete the identification of the limit of the $K_n$ kernel by proving:

\begin{proposition}
\label{prop:CLT}
As $n \rightarrow \infty$, 
\begin{equation*}
   \sum_{k= \lfloor n t \rfloor}^n \log \frac{X_k/x_k^o}{Y_k/y_k^o} \Rightarrow \frac{1}{\sqrt{\beta}} 
   \int_{\theta(t)}^1 \frac{d b_u}{\sqrt{u}},
\end{equation*}
in the Skorohod topology on $(0,1]$.
\end{proposition}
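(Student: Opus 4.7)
The plan is to Taylor-expand the logarithm about the minimizer, show that the quadratic and cubic remainders vanish in probability, and apply a blocking central limit theorem to the leading linear term. For each $k$ in the summation range write
\begin{equation*}
\log \frac{X_k/x_k^o}{Y_k/y_k^o} = \left( \frac{X_k - x_k^o}{x_k^o} - \frac{Y_k - y_k^o}{y_k^o} \right) - \frac{1}{2}\left[ \left(\frac{X_k - x_k^o}{x_k^o}\right)^{2} - \left(\frac{Y_k - y_k^o}{y_k^o}\right)^{2} \right] + R_k,
\end{equation*}
where on the event $|X_k - x_k^o| \vee |Y_k - y_k^o| \le x_k^o/2$ (of probability $1 - o(1)$ by Proposition \ref{prop:Concentration}) one has $|R_k| \le C(|X_k - x_k^o|^3 + |Y_k - y_k^o|^3)/\phi(k/n)^3$. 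Since Proposition \ref{prop:Concentration} yields sub-Gaussian tails with scale $O(n^{-1/2})$, $E|X_k - x_k^o|^3 = O(n^{-3/2})$, and $\phi(k/n)^3 \asymp (k/n)^{3/2}$, the expected cubic contribution $\sum_{k \ge \lfloor nt \rfloor} E|R_k|$ is $O((nt)^{-1/2}) = o(1)$ for each fixed $t > 0$.

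For the quadratic term, the factorization $(X-x)^2 - (Y-y)^2 = [(X-x) - (Y-y)][(X-x) + (Y-y)]$ together with the shift estimate \eqref{eq:VarShift} of Corollary \ref{prop:Shifts} gives $|E[(X_k - x_k^o)^2 - (Y_k - y_k^o)^2]| = O((\log k)^{5/2}/(n\sqrt{k}))$, and division by $\phi(k/n)^2 \asymp k/n$ followed by summation yields expected total value $O((\log n)^{5/2}/\sqrt{nt}) = o(1)$. Concentration around this mean follows from the log-Sobolev inequality of Lemma \ref{lem:LSI} applied after a smooth truncation onto the high-probability event that $|X_k - x_k^o| \vee |Y_k - y_k^o| \le c\sqrt{\log n/n}$ for all relevant $k$; on this event the truncated quadratic functional has Lipschitz norm small enough to conclude that its total fluctuation is $o_P(1)$.

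The linear piece $L_n(t) = \sum_{k=\lfloor nt \rfloor}^n [(X_k - x_k^o)/x_k^o - (Y_k - y_k^o)/y_k^o]$ carries the Brownian limit, extracted by a standard blocking CLT. Partition $[\lfloor nt \rfloor, n]$ into consecutive blocks $I_1, \ldots, I_M$ of length $L_n = (\log n)^3$, separated by buffer gaps $G_j$ of width $c \log n$; conditioning on the variables in $\bigcup_j G_j$ and applying Proposition \ref{thm:BCdecay} shows that the block-conditional minimizers agree with $(x^o, y^o)$ up to $O(n^{-2})$, and the blocks become conditionally independent. On each block $I_j = [i_0, i_1]$, slow variation of $\phi$ permits replacing $x_k^o, y_k^o$ by $\phi(i_0/n)$ modulo negligible correction; Corollary \ref{lem:VarEst} then delivers variance $(\theta'(i_0/n)/(\beta \theta(i_0/n))) L_n/n + O((\log n)^2/n)$, and Proposition \ref{prop:testfunctions} approximates the block distribution by the corresponding Gaussian with per-block multiplicative error $O((\log n)^{9/2}/\sqrt{n}) = o(1)$. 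The Riemann sum of block variances converges to $\int_t^1 \theta'(s)/(\beta\theta(s))\, ds = -\log\theta(t)/\beta = \operatorname{Var}(\beta^{-1/2}\int_{\theta(t)}^1 db_u/\sqrt{u})$. The Lindeberg condition follows from the sub-Gaussian tails of Proposition \ref{prop:Concentration}, completing the marginal CLT; applying the same decomposition to disjoint intervals $[\lfloor nt_{j+1}\rfloor, \lfloor nt_j\rfloor]$ gives finite-dimensional convergence to the target Gaussian process.

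Tightness in the Skorohod topology on $(0,1]$ reduces to a Kolmogorov--Chentsov criterion on each compact subinterval $[t_0, 1]$: the estimate $E|L_n(s) - L_n(t)|^2 = O(|\log(\theta(s)/\theta(t))|/\beta)$ holds uniformly in $n$ from the same block variance calculation and extends to higher moments via the Gaussian approximation combined with LSI. The main obstacle is the bookkeeping of errors in the blocking step: one must choose $L_n$ large enough that the additive errors $O((\log n)^2/n)$ from Corollary \ref{lem:VarEst} aggregate to $M \cdot O((\log n)^2/n) = O((\log n)^2/L_n) = o(1)$ across the $M = \Theta(n/L_n)$ blocks, and that the per-block multiplicative errors of Proposition \ref{prop:testfunctions} remain $o(1)$, while keeping $L_n \ll n^{1/4}$ as required by the hypothesis of Corollary \ref{lem:VarEst}; the choice $L_n = (\log n)^3$ (or any polylog with exponent above $2$) sits comfortably in this window.
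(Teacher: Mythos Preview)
Your proposal follows essentially the same architecture as the paper: linearize the logarithm via Taylor expansion, kill the quadratic piece using Corollary~\ref{prop:Shifts} for the mean and LSI (Lemma~\ref{lem:LSI}) for concentration, prove a blocking CLT with $O(\log n)$ buffers to decouple, and then establish tightness. The paper's blocks have length $n^{1/6}$ while yours have length $(\log n)^3$; both sit in the admissible window $(\log n)^2 \ll |I| \ll n^{1/4}$ dictated by the additive error in Corollary~\ref{lem:VarEst} and the third-moment bound, so this is cosmetic. The paper carries out the CLT by direct expansion of the block characteristic functions (your Lindeberg route is equivalent), and it handles the buffer contribution explicitly as a separate process $\mathcal{B}_t^n$ shown to vanish, which you leave implicit.

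One point to tighten: your tightness paragraph leads with the second-moment bound $E|L_n(s)-L_n(t)|^2 = O(|\log(\theta(s)/\theta(t))|)$, but a second moment linear in $|s-t|$ is \emph{not} enough for Kolmogorov--Chentsov. You then say this ``extends to higher moments via the Gaussian approximation combined with LSI,'' which is the right instinct but is doing real work: what you actually need is something like $E|L_n(s)-L_n(t)|^4 \le C|s-t|^2$ uniformly on each $[t_0,1]$, and this follows from Lemma~\ref{lem:LSI} applied to the increment (Lipschitz norm squared $O(n|s-t|)$ there) together with the mean bound from Corollary~\ref{prop:Shifts}. The paper instead invokes Billingsley's Theorem~13.5 with the product-moment criterion $E[(\zeta_r^n-\zeta_s^n)^2(\zeta_s^n-\zeta_t^n)^2] \le c(\log\frac{\theta(t)}{\theta(r)})^2$, which exploits the block decoupling directly and avoids estimating higher moments of a single increment. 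Either route works, but the Kolmogorov--Chentsov step in your write-up should be promoted from a parenthetical to an actual estimate.
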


Recall \eqref{path_convergence}.  Note that Gaussian concentration plus the formulas for the minimizer developed in the last two sections already give that $ X_{\lfloor nt \rfloor} \Rightarrow \phi(t)$ as processes on $[\delta, 1]$ for any $\delta >0$.

\subsection{Linearizing}

As a first step we have the following.

\begin{lemma} 
For Proposition \ref{prop:CLT} it is sufficient to show that
\begin{equation*}
   \sum_{k= \lfloor n t \rfloor}^n  \frac{(X_k-x_k^o)  - (Y_k - y_k^o) }{\phi(k/n)} \Rightarrow \frac{1}{\sqrt{\beta}} \int_{\theta(t)}^1 \frac{d b_u}{\sqrt{u}},
\end{equation*}
in the Skorohod topology on $(0,1]$.
\end{lemma}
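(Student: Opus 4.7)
The plan is to Taylor-expand the logarithm and show that the resulting correction terms vanish uniformly in $t$ in probability. Setting $u_k = (X_k - x_k^o)/x_k^o$ and $v_k = (Y_k - y_k^o)/y_k^o$ and using
\begin{equation*}
\log\frac{X_k/x_k^o}{Y_k/y_k^o} = (u_k - v_k) - \tfrac{1}{2}(u_k^2 - v_k^2) + r_k, \qquad |r_k| \le C(|u_k|^3+|v_k|^3),
\end{equation*}
valid on $\{|u_k|\vee|v_k|\le 1/2\}$ which by Proposition~\ref{prop:Concentration} has probability $1-o(1)$, the difference between the two sums splits as $S_n^{\log}(t) - S_n^{\mathrm{lin}}(t) = D_n(t) - \tfrac{1}{2}Q_n(t) + R_n(t)$, where $Q_n(t) = \sum_{k=\lfloor nt\rfloor}^n (u_k^2 - v_k^2)$, $R_n(t) = \sum_{k=\lfloor nt\rfloor}^n r_k$, and
\begin{equation*}
D_n(t) = \sum_{k=\lfloor nt\rfloor}^n \Bigl[(X_k-x_k^o)\bigl(\tfrac{1}{x_k^o} - \tfrac{1}{\phi(k/n)}\bigr) - (Y_k-y_k^o)\bigl(\tfrac{1}{y_k^o} - \tfrac{1}{\phi(k/n)}\bigr)\Bigr]
\end{equation*}
encodes the cost of linearizing the denominators. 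Since the limit is continuous in $t$, it suffices to show that each of $D_n, Q_n, R_n$ tends to $0$ in probability, uniformly on $[t_0,1]$ for any $t_0 > 0$.

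The pieces $R_n$ and $D_n$ should succumb to fairly brutal estimates. On the good event where $|X_k - x_k^o|+|Y_k - y_k^o| = O(\sqrt{\log n/n})$ uniformly in $k$ (Proposition~\ref{prop:Concentration}), Lemma~\ref{phi_bounds} gives $\phi(k/n)\asymp \sqrt{k/n}$, so $|u_k|^3+|v_k|^3 = O((\log n/k)^{3/2})$ and $\sup_{t\ge t_0}|R_n(t)| \le \sum_{k\ge nt_0}|r_k| = O((\log n)^{3/2}/\sqrt{nt_0}) = o(1)$. For $D_n$, Proposition~\ref{prop:FineMin} gives $|x_k^o - \phi(k/n)|+|y_k^o-\phi(k/n)| = O(1/n)$ through the bulk, hence $|1/x_k^o - 1/\phi(k/n)|, |1/y_k^o - 1/\phi(k/n)| = O(1/k)$; combined with the $L^1$ bound $E|X_k-x_k^o| = O(n^{-1/2})$ from Lemma~\ref{prop:oldconc}, this yields $E\sup_{t\ge t_0}|D_n(t)| = O(\log n/\sqrt{n})$.

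The heart of the matter is $Q_n$, since the naive bound $\sum(u_k^2+v_k^2)$ is already of order $\log n$ and one must harness the cancellation between $u_k^2$ and $v_k^2$. After replacing $(x_k^o)^2$ and $(y_k^o)^2$ by the common $\phi(k/n)^2$ at the cost of an $O(\log n/\sqrt n)$ error (again via Proposition~\ref{prop:FineMin}), I would apply Corollary~\ref{prop:Shifts}: each summand then has expectation $O((\log k)^{5/2}/k^{3/2})$, the tail of a convergent series, so $E Q_n(t) \to 0$ uniformly in $t \ge t_0$. For the second moment, the polynomial potential endows $P$ with a range-$d$ Markov-field structure; combined with the deterministic decorrelation of Proposition~\ref{thm:BCdecay} and the fourth-moment control on $u_k, v_k$ from Lemma~\ref{prop:oldconc}, one obtains $\mathrm{Cov}(u_j^2 - v_j^2, u_k^2 - v_k^2)$ decaying exponentially in $|j-k|$, yielding $\mathrm{Var}(Q_n(t)) = O(1/(nt_0))$. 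A block-decomposition maximal inequality applied to the partial-sum process $t \mapsto Q_n(t) - E Q_n(t)$ then promotes pointwise to uniform convergence. I expect the variance control of $Q_n$ to be the main obstacle, as it is where the fine-minimizer estimates, the Markov-field property, and the concentration machinery of Section~\ref{sec:Gauss} must be deployed simultaneously.
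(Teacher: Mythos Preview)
Your decomposition into $D_n$, $Q_n$, $R_n$ and the treatment of the cubic remainder and the denominator replacement are essentially what the paper does (it lumps $D_n$ and $R_n$ together into a single $O(n^{-(3/2-\epsilon)})$ term on the good event, but this is cosmetic). The substantive divergence is in how you control the quadratic piece $Q_n$.

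The paper does \emph{not} pass through a covariance estimate at all. Instead it truncates $z\mapsto z^2$ to a bounded, $O(\sqrt{\log n/n})$-Lipschitz function $\psi$ agreeing with $z^2$ on the good event, so that
\[
\hat\zeta_m^n(X,Y)=\sum_{k=m}^n \frac{\psi(X_k-x_k^o)-\psi(Y_k-y_k^o)}{\phi(k/n)^2}
\]
has $\|\nabla\hat\zeta_m^n\|_2^2\le c'\log n$ as a map $\RR_+^{2n-1}\to\RR$. Herbst's argument from the Logarithmic Sobolev Inequality (Lemma~\ref{lem:LSI}) then gives sub-Gaussian concentration of $\hat\zeta_m^n$ about its mean with variance proxy $O(\log n/n)$, and a union bound over the $O(n)$ values of $m$ produces uniform convergence of $\hat\zeta_m^n-E\hat\zeta_m^n$ to zero almost surely. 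The mean itself is handled by Corollary~\ref{prop:Shifts}, just as you propose.

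Your route---exponential covariance decay for $u_j^2-v_j^2$ via the Markov-field structure and the deterministic decorrelation of Proposition~\ref{thm:BCdecay}---is in principle viable, but it is considerably heavier: you would need to convert the minimizer-decorrelation statement into an actual mixing bound for the \emph{fluctuations}, and then a maximal inequality to upgrade pointwise to uniform control. The paper's LSI argument bypasses all of this in two lines, and since you are already invoking Proposition~\ref{prop:Concentration} (itself a consequence of the LSI), you have the tool in hand. The variance computation you flag as ``the main obstacle'' simply does not arise in the paper's approach.
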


\begin{proof} We fix a (small) $\delta >0$, and show the claim for all processes restricted to $t \in [\delta, 1]$. Afterwards it will be clear that the choice of $\delta$ is arbitrary.

 Again denote
$$
  Q = \left\{  | X_k - x_k^o |, | Y_k - y_k^o |  \le c  \sqrt{\frac{\log n}{n}}  \mbox{ for all  }k  \in [1,n] \right\},
$$
with $c$ chosen so that $P(Q^{c}) \le n^{-4}$ for all $n$ large enough (Proposition \ref{prop:Concentration}). 
Certainly it is enough to work with the process $ \one_Q  \sum_{k= \lfloor n t \rfloor}^n \log \frac{X_k/x_k^o}{Y_k/y_k^o}$. At the same time,
$$
\one_Q  \left[  \log \frac{X_k/x_k^o}{Y_k/y_k^o} - 
   \left( \frac{(X_k - x_k^o)  - (Y_k - y_k^o) }{\phi(k/n)}  \right)  +    \left( \frac{(X_k - x_k^o)^2   - (Y_k - y_k^o)^2}{2 \phi(k/n)^2} 
    \right)  \right] = O (n^{-(3/2-\epsilon)}),
$$
uniformly for $k \in [n \delta, n]$ with probability one. Here $\epsilon$ can be chosen as small  as one likes subject to the implied constant on the right hand side depending on $\epsilon$. This follows as $| \log (1+t) - t + t^2/2 | \le |t|^3$ for all $|t| \le 1/2$ while, with $(Z,z) = (X,x)$ or $(Y,y)$: on $Q$ all  $| Z_k - z_k^o|^p$ are $O(n^{-(p/2-\epsilon)})$, $  z_k^o$ is uniformly bounded below for  $k > \delta n$, and $| \frac{1}{z_{k}} - \frac{1}{\phi(k/n)} | = O(n^{-1})$ throughout the same range of indices. For the last two facts see Proposition \ref{prop:FineMin}. So, with the
left hand side of the above display denoted by $\eta_{k, n}$ we have that $ t \mapsto   \sum_{\lfloor nt \rfloor}^n \eta_{k,n}$ converges to the zero process.

Consider next
$$
   \zeta_m^n(X,Y) = \one_Q \sum_{k= m}^n   \frac{(X_k - x_k^o)^2   - (Y_k - y_k^o)^2}{ \phi(k/n)^2}.
$$
The proof will be finished by showing that $\max_{m \in [\delta n, n]} | \zeta_m^n |$ goes to zero with probability one.
We actually take the approximation
$$
   \hat{\zeta}_m^n(X,Y) = \sum_{k= m}^n \frac{ \psi( X_k - x_k^o)   - \psi(Y_k - y_k^o) }{\phi(k/n)^2},
$$
for $\psi(z) = z^2$ for $|z| \le c \sqrt{\frac{\log n}{n}}$ outside of which $\psi$ is taken to be constant. Obviously, $\hat{\zeta}_m^n$ and $\zeta_m^n$ agree on $Q$, while as a map from $x_m, y_m, \dots x_n, y_n-1$ to $\RR$, we have that
$ | \nabla \hat{\zeta}_m^n (x, y) |^2 \le c' \log n$ with $c'$ depending on $\delta$.  Lemma \ref{lem:LSI} then implies that
\begin{equation}
\label{eq:QuadKill}
  P \left( | \hat{\zeta}_m^n - E \hat{\zeta}_m^n | > n^{-1/4} \right) \le  2 e^{ - c'' ( n^{1/2}/ \log n )}, 
\end{equation}
for all $m > \delta n$ with a $c'' = c''(V,\beta,a, \delta)$.
But then by \eqref{eq:VarShift} of Corollary \ref{prop:Shifts}   have that $E \zeta_m^n = O ( (\log n /n)^{1/2})$ uniformly in $m > \delta n$, and the same estimate will hold for $E \hat{\zeta}_m^n$. Now the result follows from  \eqref{eq:QuadKill} and a union bound.  
\end{proof}

\subsection{Finite dimensional convergence}

We employ a classical blocking argument, with the limit being understood through the sum over ``good blocks" (of length $O(n^{1/6}))$ of the variables, each  such block
separated by $O(\log n)$ ``buffers". That the minimizer of any conditional Hamiltonian becomes independent of the boundary in $O(\log n)$ steps will produce the required decorrelation between adjacent blocks.

Define recursively the times,
\begin{equation*}
   m_1 =1, \quad  m_{k+1} =  \left\{ \begin{array}{ll} m_k + \lfloor c \log n \rfloor &  \mbox{ if } k \mbox{ is odd} \\
                                        m_k + \lfloor n^{1/6} \rfloor &  \mbox{ if } k \mbox{ is even},  \end{array} \right.
\end{equation*}
and corresponding good blocks and buffers: for $i = 1, 2, \dots$,
\begin{align}
\label{eq:Blocks}
   \mathcal{G}_i &  = \frac{1}{\phi(m_{2i}/n)} \sum_{k = m_{2i}}^{m_{2i+1}} [(X_k - x_k^o)   - (Y_k - y_k^o)], \\
   \mathcal{B}_i  & = \frac{1}{\phi(m_{2i-1}/n)} \sum_{k = m_{2i-1}}^{m_{2i}} [(X_k - x_k^o)   - (Y_k - y_k^o)]. \nonumber
\end{align}
Here we made one more approximation in pulling the $\phi^{-1}$ of smallest index out of each block sum. By the continuity of $\phi$ it will be clear that this will make no difference in what follows. Also, truncating the final $\mathcal{G}_i$ sum if necessary we can always assume that the stretch $[n-c \log n, n]$ is buffer. With this setup the result is:

\begin{lemma}
\label{lem:FiniteDim}
Set $\mathcal{G}_t^n = \sum_{i: m_i \in [ nt, n]} \mathcal{G}_i$ and 
$\mathcal{B}_t^n = \sum_{i: m_i \in [ nt, n]} \mathcal{B}_i$.  Then as $n \rightarrow \infty$, there is a suitable choice of
$c = c (V, \beta, a)$ in \eqref{eq:Blocks} so that for any $k$ and 
$ 0 < t_1 < t_2 < \cdots < t_k \le 1$, 
$ (\mathcal{G}_{t_1}^n, \mathcal{G}_{t_2}^n, \dots, \mathcal{G}_{t_k}^n) $ and 
   $  (\mathcal{B}_{t_1}^n, \mathcal{B}_{t_2}^n, \dots, \mathcal{B}_{t_k}^n),  $
 converge in law to a centered Gaussian vector with covariance $\frac{1}{\beta} \log \frac{1}{\theta(t_i)} \wedge
\frac{1}{\beta} \log \frac{1}{\theta(t_j)} $ and the zero vector, respectively.
\end{lemma}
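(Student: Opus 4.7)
The plan follows a classical blocking argument based on three ingredients from the preceding sections: the Markov field structure of $P$ (since $V$ has degree $d$, the Hamiltonian has finite interaction range $2d$, so conditioning on $\ge d$ consecutive indices separates left from right); the exponential boundary decorrelation of conditional minimizers (Proposition \ref{thm:BCdecay}); and the sharp Gaussian concentration and variance formulas of Section \ref{sec:Gauss}. I fix a small $\delta > 0$ and assume all $t_j \ge \delta$, so all indices in play lie in $[\delta n, n]$.

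Dispatching the buffers: each $\mathcal{B}_i$ has length $c\log n$, well within the $n^{1/4-}$ regime, so Corollary \ref{lem:VarEst} gives $\mathrm{Var}(\mathcal{B}_i) = O(\log n/n)$, while Corollary \ref{prop:Shifts} combined with $\phi(m/n) \asymp \sqrt{m/n}$ gives $|E[\mathcal{B}_i]| = O((\log n)^6 / m_{2i-1}^{3/2})$. Summing over the $O(n^{5/6})$ buffers in $[\delta n, n]$ yields $|E[\mathcal{B}_t^n]| = o(1)$. Viewed as a function of $(X,Y)$, the Lipschitz-squared norm of $\mathcal{B}_t^n$ is $O(n^{5/6}\log n)$, so Lemma \ref{lem:LSI} provides concentration about the mean on an $o(1)$ scale. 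Hence $\mathcal{B}_t^n \Rightarrow 0$ in probability.

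For the good blocks, the key step is to condition on the buffer variables $\mathbf{q}$. With the buffer length $c\log n$ taken large enough in terms of the constants appearing in Corollary \ref{lem:VarEst} and Proposition \ref{thm:BCdecay}, the Markov property renders the $\mathcal{G}_i$ mutually independent under $P(\cdot\mid \mathbf{q})$, and the characteristic function of $\sum_j \xi_j \mathcal{G}_{t_j}^n$ factorizes as $\prod_i E[e^{is_i\mathcal{G}_i}\mid \mathbf{q}]$ with $s_i = \sum_{j: m_{2i}\ge nt_j}\xi_j$. On the overwhelmingly likely event $\mathcal{Q}$ that $\|\mathbf{q}-(x^o,y^o)\|_{\infty} \le c'\sqrt{\log n/n}$ throughout (Proposition \ref{prop:Concentration} plus a union bound), Corollary \ref{lem:VarEst} applied to each good block gives
\[
\mathrm{Var}_\mathbf{q}(\mathcal{G}_i) = \frac{1}{\beta n}\,\frac{\theta'(m_{2i}/n)}{\theta(m_{2i}/n)}\lfloor n^{1/6}\rfloor + O\!\left(\frac{(\log n)^2}{n}\right)
\]
uniformly in such $\mathbf{q}$, and summing over blocks with $m_{2i}\ge nt_j$ produces the Riemann sum $\beta^{-1}\int_{t_j}^1 \theta'(u)/\theta(u)\,du = \beta^{-1}\log(1/\theta(t_j))$ with accumulated error $O(n^{5/6}(\log n)^2/n) = o(1)$. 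The conditional means $E_\mathbf{q}[\mathcal{G}_i]$ are controlled by combining Proposition \ref{thm:BCdecay} (exponential decay from the boundary) with a conditional version of Corollary \ref{prop:Shifts}, so that with $c$ chosen sufficiently large, the total mean is also $o(1)$.

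Given the conditional variance structure, the CLT follows from verifying Lindeberg: by Proposition \ref{prop:Concentration} combined with the $\phi^{-1}$ normalization, $|\mathcal{G}_i|$ has sub-Gaussian tail scale $O(n^{-5/12})$, so $\sum_i E_\mathbf{q}[\mathcal{G}_i^2 \one_{|\mathcal{G}_i|>\epsilon}]$ vanishes super-polynomially. The joint covariance at $t_i < t_j$ equals $\mathrm{Var}(\mathcal{G}_{t_j}^n)$, since under conditional independence the block sum over $[nt_i,nt_j]$ is orthogonal to that over $[nt_j,n]$, producing the asserted $\min$-type covariance $\beta^{-1}\log(1/\theta(t_{\max(i,j)}))$. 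The main technical obstacle is that $\mathrm{Var}_\mathbf{q}(\mathcal{G}_i)$ is a priori random in $\mathbf{q}$, so the summed conditional variance might not equal the deterministic limit; this is resolved by the uniformity of Corollary \ref{lem:VarEst} over typical $\mathbf{q}$ coupled with the Gaussian concentration of the buffer variables, which renders the variance effectively deterministic to leading order and lets the conditional Gaussian limit transfer to the unconditional one.
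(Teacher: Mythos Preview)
Your approach is essentially the same as the paper's: condition so as to render the $\mathcal{G}_i$ independent, then run a triangular-array CLT block by block with the variance furnished by Corollary~\ref{lem:VarEst} and the mean controlled by Corollary~\ref{prop:Shifts}. The paper does the CLT by Taylor-expanding each factor $E_{q_i}[e^{i\tau\mathcal{G}_i}]$ to third order (which is Lindeberg in disguise), while you cite Lindeberg directly; these are equivalent. Your treatment of the buffer sum via the Lipschitz bound and Lemma~\ref{lem:LSI} is a clean alternative to the paper's, which simply reruns the block expansion on the short buffers and observes the variance contribution is $o(1)$.

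There is however one genuine technical mismatch. You condition on \emph{all} buffer variables $\mathbf q$, so the conditional law of $(X,Y)|_{I_i}$ is determined (via Markov) by the $d$ buffer values immediately adjacent to $I_i$. But Corollary~\ref{lem:VarEst} is stated for the conditional measure on $K=[i_0-c\log n,\,i_1+c\log n]$ given values at $\partial K$, i.e.\ with the conditioning placed a distance $c\log n$ from $I$; that $c\log n$ cushion is what lets Proposition~\ref{thm:BCdecay} pin the conditional minimizer to $(x^o,y^o)$ on $I$ and makes the Gaussian approximation in the proof go through with the stated error. Conditioning right at $\partial I_i$ is a different (more informative) conditioning, and the corollary as written does not give you $\mathrm{Var}_{\mathbf q}(\mathcal G_i)$ uniformly in $\mathbf q$. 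Your final paragraph acknowledges the difficulty but the resolution you sketch (``uniformity of Corollary~\ref{lem:VarEst} over typical $\mathbf q$'') begs exactly this point.

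The paper's fix is simple and worth adopting: do not condition on the full buffer. Instead let $K_i$ be $I_i$ together with only the inner $(c/3)\log n$ stretch of each adjacent buffer, and condition on $q_i:=(X,Y)|_{\partial K_i}$, which sits in the \emph{middle third} of each buffer. The sets $\partial K_i$ are then separated by more than $d$, so by the Markov property the $\mathcal G_i$ are still conditionally independent; and now Corollary~\ref{lem:VarEst} applies verbatim to $E_{q_i}[\mathcal G_i^2]$ on the event $Q=\{\|q_i-(x^o,y^o)\|_{\infty,\partial K_i}\le c'\sqrt{\log n/n}\ \forall i\}$, yielding a \emph{deterministic} leading-order variance uniformly over $q_i\in Q$. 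With that one change your Lindeberg argument goes through without further modification.
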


\begin{proof}
We start by estimating $E e^{i \tau \mathcal{G}_t^n}$ for $t$ fixed. With $I_i$ the support of any corresponding  $\mathcal{G}_i$ figuring into $\mathcal{G}_t^n$, denote by $K_i$ the interval formed by adjoining the $(c/3) \log n$ length stretches of indices to the left/right of $I_i$. The parameter $c$ is chosen large enough so that the strategy of 
 Proposition \ref{prop:testfunctions}  can be followed (the length $c \log n $ buffer about $I_i$ is now length 
 $(c/3) \log n$, but $c$ is chosen as needed in both cases). In particular, boundary values set at $\partial K_i$ will have weak influence on the statistics of $\mathcal{G}_i$.
 
 With $q_i$ the variables in $\partial K_i$ we by now understand that
 \begin{equation}
 \label{charfunct1}
   E e^{i \tau \mathcal{G}_t^n} = E \Bigl[ \one_Q  \prod_{i : m_i \in [ nt, n]} E_{q_i}  \left[ e^{i \tau \mathcal{G}_i}  \right]\Bigr] + o(1)
 \end{equation}
 where $Q$ is the event that $\max_{i} \| q - (x^o, y^o) \|_{\partial K, \infty} \le c' 
 \sqrt{ \frac{\log n}{n}}$ for suitably large $c'$. Further,
 $$
  \Bigl| E_{q_i} [ e^{i \tau \mathcal{G}_i} - (1 + i \tau    \mathcal{G}_i - \frac{1}{2}  \tau  \mathcal{G}_i^2 ) ] \Bigr| \le 
  E_{q_i} | \tau \mathcal{G}_i|^3 = O(|I_i|^3 n^{-3/2}),
 $$
by Proposition \ref{prop:testfunctions} (or rather its proof). Similarly, by Corollary \ref{prop:Shifts}
we have that
$E_{q_i}   \mathcal{G}_i  = O ( |I_i| (\log n)^5 n^{-3/2} ).$ Combining these facts with Corollary \ref{lem:VarEst} we find that
\begin{equation}
\label{charfunct2}
    E_{q_i} e^{i \tau \mathcal{G}_i} = 1 -  \frac{\tau^2}{2} \frac{\theta'(m_{2i}/n)}{ \beta \theta(m_{2i}/n)}   
      \frac{(m_{2i+1}- m_{2i})}{ n} + \kappa_n
\end{equation}
with $|\kappa_n| = o(   (m_{2i+1}- m_{2i}){ n}^{-1} )$ uniformly in $q_i \in Q$. Substituting back into \eqref{charfunct1} we recognize the Riemann sum for $\int_t^1 \theta^{\prime}(s) \theta^{-1}(s) ds$  on scale $\triangle =  (m_{2i+1}- m_{2i}){ n}^{-1} $. 

The same considerations apply to $\mathcal{B}_t^n$, with the right hand side of \eqref{charfunct2} modified by shifting $2 i$ to $2i-1$.
But  $ (m_{2i}- m_{2i-1}){ n}^{-1} = o(\triangle)$ while there are still $\triangle^{-1}$ factors in the analog of \eqref{charfunct1}
The outcome is that $E  e^{i \tau \mathcal{B}_t^n} \rightarrow 1$ as $n \rightarrow \infty$.   To be precise we note that while
Corollaries  \ref{lem:VarEst}  and \ref{prop:Shifts} do not apply to the final block $\mathcal{B}_{i(n)}$ in $\mathcal{B}_t^n$ (as it is constructed to be supported on $[n - c \log n , n]$, a more crude estimate by Proposition \ref{prop:testfunctions} gives that
$E (\mathcal{B}_{i(n)})^2 = o(1)$. 


The convergence of $k$-point marginals follows from the asymptotic independence of increments for $t \mapsto \mathcal{G}_t^n$ which is immediate from  the necessary version of  \eqref{charfunct1}.  Taking $k=2$ gets the point across.  With $s < t$ and any $\tau $ and $\nu$,
\begin{align*}
E e^{i \tau \mathcal{G}_s^n + i \nu \mathcal{G}_t^n}  & =  E \left[  E_{q} e^{ i \tau (\mathcal{G}_s^n - \mathcal{G}_t^n )} 
  E_{q'} e^{i (\tau + \nu) \mathcal{G}_t^n } \right] \\
    & = e^{ - \frac{\tau^2}{2\beta} \log \frac{\theta(t)}{\theta(s)} } e^{ - \frac{(\tau + \nu)^2}{2 \beta} \log \frac{1}{ \theta(t)}} 
       +o(1), 
\end{align*}
and the exponent reads $ (- \frac{1}{2 \beta}) \times $ $(\tau^2 \log \frac{1}{\theta(s)} +   \nu^2 \log \frac{1}{\theta(t)}  + 2  \tau \nu \log \frac{1}{\theta(t)} )$ as desired. In line one above we simply use  that no $\mathcal{G}_i$ is included in both 
$(\mathcal{G}_s^n - \mathcal{G}_t^n)$ and $  \mathcal{G}_t^n$ $-$ the convention being it belongs to the sum in which its left-most point of support lies. Thus the conditionings variables  $ q$ and $q'$ can be chosen not to overlap, and to be a distance $O(\log n)$ from any of the corresponding  $\mathcal{G}_i$'s within. Now we simply apply the strategy inherent in
\eqref{charfunct2} to the $E_q$ and $E_{q'}$ expectations separately.
\end{proof}

\subsection{Pathwise convergence}

To lift the convergence from marginal distributions to convergence in the space of continuous paths we show the following.

\begin{lemma} 
\label{lem:Tightness}
With $ \zeta_t^n$ either equal to $\mathcal{G}_t^n$  or $\mathcal{B}_t^n$ defined in the statement of Lemma \ref{lem:FiniteDim}
and all $0 \le r \le s \le t \le 1$:
\begin{equation}
\label{tightmoments}
   E \left[ ( \zeta_r^n - \zeta_s^n )^2 ( \zeta_s^n - \zeta_t^n)^2 \right]  \le c \left( \log \frac{\theta(t)}{\theta(r)} \right)^2
\end{equation}
for a constant $c$ and all large enough $n$.
\end{lemma}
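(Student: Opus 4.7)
The strategy exploits two structural facts from the previous subsections: (i) because $V$ is a polynomial of degree $d$, the measure $P$ is a Markov field of range $d$, so conditioning on any stretch of coordinates of length $\ge d$ decouples left from right; (ii) the leading-order variance of a good-block sum is supplied by Corollary \ref{lem:VarEst}. Together these will convert $E[(\zeta_r^n - \zeta_s^n)^2 (\zeta_s^n - \zeta_t^n)^2]$ into a product of two second moments, each controlled by a single logarithmic factor.

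Write $A = \zeta_r^n - \zeta_s^n$ and $B = \zeta_s^n - \zeta_t^n$. By construction, the index supports of $A$ and $B$ are disjoint and separated by at least one buffer block of length $\lfloor c \log n \rfloor \ge d$. Let $q$ denote the coordinates in that intervening buffer and $E_q$ the corresponding conditional expectation. The Markov field property gives the \emph{exact} factorization
\begin{equation*}
E_q[A^2 B^2] = E_q[A^2] \cdot E_q[B^2], \qquad \text{hence} \qquad E[A^2 B^2] = E\bigl[ E_q[A^2] \, E_q[B^2] \bigr].
\end{equation*}

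Next, bound $E_q[A^2]$ for $q$ in the typical event $Q_\partial = \{\|q - (x^o, y^o)\|_\infty \le c' \sqrt{\log n / n}\}$, where $c'$ is chosen so that $P(Q_\partial^c) \le n^{-K}$ for some large $K$ (via Proposition \ref{prop:Concentration}). Focus on $\zeta = \mathcal{G}$; the buffer case is analogous but trivially smaller, as the total buffer length is $O(n^{5/6} \log n)$ giving variance $o(1)$. Expanding $A = \sum_i \mathcal{G}_i$ over good blocks in $[nr, ns)$,
\begin{equation*}
E_q[A^2] = \sum_i E_q[\mathcal{G}_i^2] + 2 \sum_{i < j} E_q[\mathcal{G}_i \mathcal{G}_j].
\end{equation*}
For the diagonal, further conditioning on the buffers flanking block $i$ and applying Corollary \ref{lem:VarEst} (valid since each good block has length $\lfloor n^{1/6} \rfloor$, well within the $n^{1/4-}$ hypothesis) yields
\begin{equation*}
E_q[\mathcal{G}_i^2] = \frac{1}{\beta} \frac{\theta'(m_{2i}/n)}{\theta(m_{2i}/n)} \cdot \frac{m_{2i+1} - m_{2i}}{n} + O\Bigl(\tfrac{(\log n)^2}{n}\Bigr),
\end{equation*}
uniformly in $q \in Q_\partial$. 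Summation over $i$ produces a Riemann sum for $\tfrac{1}{\beta} \int_r^s \theta'/\theta \, dw = \tfrac{1}{\beta} \log(\theta(s)/\theta(r))$. For the off-diagonal, condition on the buffers separating blocks $i$ and $j$ to factor $E_q[\mathcal{G}_i \mathcal{G}_j]$ into a product, and bound each factor via the mean estimate \eqref{eq:MeanShift} of Corollary \ref{prop:Shifts}, yielding $o(1)$ after summation. Consequently, $E_q[A^2] \le \frac{c}{\beta} \log(\theta(s)/\theta(r)) + o(1)$, uniformly on $Q_\partial$; identically, $E_q[B^2] \le \frac{c}{\beta} \log(\theta(t)/\theta(s)) + o(1)$.

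On $Q_\partial^c$, crude bounds $E_q[A^2], E_q[B^2] \le \mathrm{poly}(n, \|q\|_\infty)$ from Lemma \ref{prop:oldconc} combined with the Gaussian tails of $q$ contribute negligibly to $E[E_q[A^2] E_q[B^2]]$. Multiplying the two second-moment bounds and invoking the elementary inequality $xy \le \tfrac{1}{4}(x+y)^2$ with $x = \log(\theta(s)/\theta(r))$ and $y = \log(\theta(t)/\theta(s))$ gives
\begin{equation*}
E[A^2 B^2] \le c \bigl(\log \theta(t)/\theta(r)\bigr)^2,
\end{equation*}
as required. The main obstacle is the uniform control of off-diagonal terms $E_q[\mathcal{G}_i \mathcal{G}_j]$ in the expansion of $E_q[A^2]$: while the diagonal assembles cleanly into the desired Riemann sum, the pairwise cross terms must be shown sub-leading via the combined use of the Gaussian approximation (Proposition \ref{prop:testfunctions}) and the exponential decorrelation (Proposition \ref{thm:BCdecay}), analogously to the treatment in Lemma \ref{lem:FiniteDim} but now at the level of second moments rather than characteristic functions.
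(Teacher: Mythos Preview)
Your argument is correct and uses the same ingredients as the paper's proof: the Markov field property to decouple blocks, Corollary~\ref{lem:VarEst} for the diagonal second moments, and the conditional mean estimate from (the proof of) Corollary~\ref{prop:Shifts} for the off-diagonal terms. The only organizational difference is that the paper expands $E[(\mathcal{G}_r^n-\mathcal{G}_s^n)^2(\mathcal{G}_s^n-\mathcal{G}_t^n)^2]$ directly into the fourfold sum $\sum_{i_1,i_2,j_1,j_2} E[\mathcal{G}_{i_1}\mathcal{G}_{i_2}\mathcal{G}_{j_1}\mathcal{G}_{j_2}]$ and conditions around each pair $(\mathcal{G}_i,\mathcal{G}_j)$ separately, whereas you first condition on a single separating buffer to obtain the exact factorization $E_q[A^2]E_q[B^2]$ and only then expand each factor. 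Your route is marginally cleaner in that the logarithmic factors $\log(\theta(s)/\theta(r))$ and $\log(\theta(t)/\theta(s))$ appear directly from the Riemann sum (and are combined via $xy\le\tfrac14(x+y)^2$), while the paper passes through a cruder bound $\theta'/\theta\le C/\phi$ and arrives at $\int_r^s du/\phi(u)\int_s^t dv/\phi(v)$, which is then compared to the target on the bulk region; but the substance is the same.
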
 

This suffices for the tightness of $t \mapsto \mathcal{G}_t^n$ and $t \mapsto \mathcal{B}_t^n$  due to Theorem 13.5 of \cite{Bill}. To compare the above with that statement, note that we are using the latter in the case that the limit process is continuous, the $\alpha$ and $\beta$ parameters defined there equal to one, and choice of $F(t) =  - \sqrt{c}\log \theta(t)$ (our time-like parameter naturally runs ``in reverse").  We can then conclude the 
full convergence of the desired process $t \mapsto \mathcal{G}_t^n +  \mathcal{B}_t^n$ to $ t \mapsto  \int_{\theta(t)}^1 \frac{d b_u}{\sqrt{\beta u}}$  
by Slutsky's Lemma.

\begin{proof} We show the inequality \eqref{tightmoments} holds for the process of good blocks $\mathcal{G}_t^n$. A similar calculation will apply to  $\mathcal{B}_t^n$. To start write 
\begin{equation}
\label{TBound1}
   E \left[ (  \mathcal{G}_r^n -  \mathcal{G}_s^n )^2 (  \mathcal{G}_s^n -  \mathcal{G}_t^n)^2 \right]  
   = \sum_{\substack{i_1, i_2: m_{2i} \in [nr, ns)  \\ j_i, j_2: m_{2j} \in [ns, nt)}} E [ \mathcal{G}_{i_1} \mathcal{G}_{i_2} \mathcal{G}_{j_1} \mathcal{G}_{j_2} ],
\end{equation}
recalling \eqref{eq:Blocks}.    The main contribution stems from terms on the right hand side of \eqref{TBound1} in which 
$i_1 = i_2$ and $j_1 = j_2$. For any such term we have that: with $\delta_n = c' \sqrt{\frac{\log n}{n}}$ and large enough $c'$,
\begin{align}
\label{TBound2}
   E [  \mathcal{G}_{i}^2 \mathcal{G}_{j}^2 ]  & = E \Bigl[   E_{q_i} [ \mathcal{G}_{i}^2 ]  \,  E_{q_j} [ \mathcal{G}_{j}^2 ] , \| q_i, q_j  \|_{\infty} \le \delta_n  \Bigr] + o(n^{-2}) \\
   & \le c''  \frac{(m_{2i+1} - m_{2i}) (m_{2j+1} - m_{2j})}{ n^2 \phi(m_{2i}/n) \phi(m_{2j}/n)} + o(n^{-2}), \nonumber
 \end{align}
  by reasoning used several times before. And as  in all such cases, we can choose the supports of the  disjoint boundary $q_i$ and $q_j$ a large multiple of $\log n$ away from the respective supports of $ \mathcal{G}_{i}$ and $ \mathcal{G}_{j}$. 
The inequality in \eqref{TBound2}  is then a direct consequence of Corollary \ref{lem:VarEst}. Summed over all 
$O(n^2  | m_{2k+1} - m_{2k}|^{-2}) = O( n^{5/3} )$ possible $i$ and $ j$ we get a constant multiple of 
$\int_r^s \frac{du}{\phi(u)} \int_s^t \frac{dv}{\phi(v)}$
upper bound for the corresponding subsum of \eqref{TBound1}.

Terms of type $i_1 \neq i_2$ or $j_1 \neq j_2$ in \eqref{TBound1} are easily seen to be subdominant given, in this regime of indices,   $ |  E_q [ \mathcal{G}_i ] | \one_{|| q||_\infty \le \delta_n} = O(n^{-(3/2-)})$.  This is a byproduct of the proof of 
Corollary \ref{prop:Shifts}.
\end{proof}

\section{Convergence in norm}
\label{sec:Norm}

The results of the previous section imply the pointwise convergence of $K_n(s,t)$ to $K(s,t)$, at least over subsequences on a suitable probability space.  The proof of the convergence of the corresponding operators in Hilbert-Schmidt norm (and in the same subsequential coupling) would follow if we could build a dominating kernel $\widehat{K}$ (that is, $K_n(s,t) \le \widehat{K}(s,t)$) which lies almost surely in $L^2 ([0,1]^2)$. Note that one readily checks that 
$\int_0^1 \int_0^t | K (s,t)|^2 \, ds dt < \infty$ with probability one.

The next proposition provides such an estimate, but only away from the singularity at the origin. 

\begin{proposition}
\label{prop:kernelbound}
For sufficiently large $c = c(V, \beta, a)$ and  any $\epsilon > 0$,
\begin{equation}
\label{ineq:kernelbound} 
  K_n(s,t) \le C_n \frac{  ( \theta(s) \theta(t) )^{-\epsilon} }{ (\phi(s) \phi(t) )^{1/2}} \left( \frac{\theta(s) }{\theta(t)} \right)^{\frac{a}{2} + \frac{1}{4}} 
  \quad \mbox{ for }  \  c \frac{\log n}{n} \le s \le t \le 1,  
\end{equation}
in which $C_n = C_n(c, \epsilon)$ is a tight random sequence.
\end{proposition}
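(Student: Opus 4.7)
The plan is to split $\log K_n(s,t) = \mathcal{D}_n(s,t) + \mathcal{S}_n(s,t)$ with
$\mathcal{D}_n(s,t) = -\log x_j^o + \sum_{k=i}^{j-1}\log(y_k^o/x_k^o)$
the deterministic skeleton and
$\mathcal{S}_n(s,t) = -\log(X_j/x_j^o) + \sum_{k=i}^{j-1}[\log(Y_k/y_k^o) - \log(X_k/x_k^o)]$
the stochastic fluctuations, and to analyze each separately. Proposition \ref{prop:FineMin} together with the boundary-error estimates of \eqref{eq:mean1}, which are uniformly $O(1)$ once $i, j \ge c\log n$ with $c$ large, gives
\begin{equation*}
\mathcal{D}_n(s,t) = -\tfrac12\log[\phi(s)\phi(t)] + \bigl(\tfrac{a}{2}+\tfrac{1}{4}\bigr)\log\frac{\theta(s)}{\theta(t)} + O(1)
\end{equation*}
uniformly in $c\log n/n \le s < t \le 1$, which is exactly the deterministic shape of \eqref{ineq:kernelbound}. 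It remains to absorb the stochastic part into the polynomial safety factor $(\theta(s)\theta(t))^{-\epsilon}$.

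Rewrite $\mathcal{S}_n(s,t) = -\log(X_j/x_j^o) + \bar\eta_n(s) - \bar\eta_n(t)$ with
$\bar\eta_n(u) := \sum_{\ell=\lfloor nu\rfloor}^{n-1}[\log(Y_\ell/y_\ell^o) - \log(X_\ell/x_\ell^o)]$.
On the high-probability event $Q$ of Proposition \ref{prop:Concentration}, $|\log(X_j/x_j^o)| = O(\sqrt{\log n/j}) = O(1)$ for $j \ge c\log n$, so it suffices to show
$\sup_{u \in [c\log n/n, 1]}[\pm\bar\eta_n(u) - \epsilon\log(1/\theta(u))]$
is tight in $n$. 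The linearization at the start of Section \ref{sec:CLT} replaces $\bar\eta_n(u)$ on $Q$ by the linear surrogate $\sum_{\ell \ge nu}[(Y_\ell - y_\ell^o) - (X_\ell - x_\ell^o)]/\phi(\ell/n)$ up to a uniformly $o(1)$ error; its mean is $O(1)$ uniformly in $u$ upon summing the shift estimates of Corollary \ref{prop:Shifts} (the series $\sum (\log k)^5 k^{-3/2}$ is convergent). The Logarithmic Sobolev Inequality of Lemma \ref{lem:LSI}, applied with squared Lipschitz constant $\|\nabla\bar\eta_n(u)\|^2 \asymp \sum_{\ell \ge nu} 2/\phi(\ell/n)^2 \asymp 2n\log(1/\theta(u))$ (using the identity $1/\phi^2 = (\theta')^2/(4\kappa\theta)$), then delivers the sub-Gaussian tail
\begin{equation*}
P\bigl(|\bar\eta_n(u) - E\bar\eta_n(u)| > M\bigr) \le 2\exp\bigl(-cM^2/\log(1/\theta(u))\bigr).
\end{equation*}

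A chaining argument on the dyadic grid $u_k$ with $\theta(u_k) = 2^{-k}$, $k = 0,1,\dots,K_n = O(\log n)$, then completes the bound: the union bound yields
$P(\max_k[\pm\bar\eta_n(u_k) - \epsilon k] > M) \le \sum_k 2\exp(-c(\epsilon k+M)^2/k) \le C\exp(-c'\epsilon M)$,
tight in $M$ uniformly in $n$ for any fixed $\epsilon > 0$. The oscillation of $\bar\eta_n$ within each dyadic block is shown to be $O(1)$ by a second, Dudley-type round of chaining using the $L^2$-increment estimate $E[(\bar\eta_n(u) - \bar\eta_n(u'))^2] = O(\log[\theta(u)/\theta(u')])$ from Corollary \ref{lem:VarEst} --- the covering number of a block in this metric grows polynomially in the inverse scale, giving a convergent Dudley integral. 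The main technical obstacle I anticipate is executing this chaining uniformly down to the hard-edge scale $u = c\log n/n$: extracting from the LSI the favorable variance $\log(1/\theta(u))$ rather than the naive $n\log(1/\theta(u))$ relies crucially on the order-$n$ convexity of $n\beta H$, and the within-block continuity must be tight enough that the Gaussian fluctuation $\sqrt{\log(1/\theta(u))}$ is beaten by the polynomial margin $\epsilon\log(1/\theta(u))$ at every dyadic scale, so that no residual $\log n$ factor is picked up by $C_n$.
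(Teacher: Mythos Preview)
Your proposal is essentially correct and follows the paper's own route: split $\log K_n$ into the deterministic skeleton (handled via Proposition~\ref{prop:FineMin}) and the stochastic fluctuation $\bar\eta_n$, then control the latter by LSI-based sub-Gaussian tails, a union bound over exponential/dyadic blocks in $\log(1/\theta)$, a Dudley chaining for the within-block oscillation, and Corollary~\ref{prop:Shifts} for the uniform boundedness of the mean. The paper packages the stochastic estimate as a separate statement (Lemma~\ref{LILtype}) with the intermediate envelope $h(t)=(1+\log(1/\theta(t)))^p$, $p\in(1/2,1)$, which is only afterwards traded for $\theta^{-\epsilon}$; your direct $\epsilon\log(1/\theta)$ budget is an equivalent bookkeeping.

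One technical point worth adjusting: to make $\bar\eta_n$ globally Lipschitz down to $k=c\log n$, the paper does not linearize but instead \emph{truncates} the logarithm at level $|z-z_k^o|\le c'\sqrt{\log n/n}$ (see \eqref{eq:cutoff}), which cleanly yields $\|\nabla S_k\|_2^2\le 16n\log(n/k)$ and hence the LSI increment bound you need. Your appeal to the Section~\ref{sec:CLT} linearization and to Corollary~\ref{lem:VarEst} for the within-block increments is not quite right as stated, since both of those are formulated only for bulk indices $k\ge\delta n$; the truncation device and the direct LSI increment estimate (as in \eqref{SLSI}) are what carry the argument all the way to the hard-edge scale.
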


The point is that, denoting the deterministic part of the right hand side of \eqref{ineq:kernelbound} by
\begin{equation}
\label{boundingkernel}
K_{\epsilon}(s,t) =   \frac{  ( \theta(s) \theta(t) )^{-\epsilon} }{ (\phi(s) \phi(t) )^{1/2}} \left( \frac{\theta(s) }{\theta(t)} \right)^{\frac{a}{2} + \frac{1}{4}}, 
\end{equation}
a calculation shows that  $\int_0^1 \int_0^t | K_{\epsilon} (s,t)|^2 ds dt < \infty$ as long as $\epsilon < \frac{1}{4} \wedge  \frac{(a+1)}{2}$.  To bridge the gap for small values of $s$ and $t$, we will  show the following.

\begin{proposition} 
\label{cheapprop}
For any $c>0$,
\begin{equation}
\label{vanishonstrip}
   \iint\limits_{ 0 \le s \le t \le 1, \, s \le c \frac{\log n}{n} } | K_n(s,t)|^2 \, ds dt \rightarrow 0
\end{equation}
 in probability.   
\end{proposition}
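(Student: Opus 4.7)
The plan is to bound the integral via the cell decomposition of $K_n$, using Proposition~\ref{prop:kernelbound} to handle the regime where $t$ is away from the singularity and a moment argument for the remaining indices. Since $K_n$ is piecewise constant on cells $\Gamma_{ij}$ of area $n^{-2}$, we have
\begin{equation*}
\iint\limits_{0\le s\le t\le 1,\, s\le c\log n/n}|K_n|^2\,ds\,dt \;=\; \frac{1}{n^2}\sum_{i=1}^{i_\ast}\sum_{j=i}^{n}K_{ij}^2, \qquad K_{ij}=\frac{1}{X_j}\prod_{k=i}^{j-1}\frac{Y_k}{X_k},
\end{equation*}
with $i_\ast = \lfloor c\log n\rfloor$. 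Fix $L = \lceil c'\log n\rceil$ for $c' > c$ large enough that Proposition~\ref{prop:kernelbound} applies from the $L$-th column onward, and split the inner sum at $j = L$.

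For the part $j > L$, the multiplicative identity $K_{ij} = K_{L,j}\,\Pi_{i,L}$ with $\Pi_{i,L}:=\prod_{k=i}^{L-1}Y_k/X_k$ gives the factorization
\begin{equation*}
\frac{1}{n^2}\sum_{i\le i_\ast}\sum_{j>L}K_{ij}^2 \;=\; \Bigl(\sum_{i\le i_\ast}\Pi_{i,L}^{2}\Bigr)\cdot \Bigl(\frac{1}{n^{2}}\sum_{j>L}K_{L,j}^{2}\Bigr).
\end{equation*}
The second factor is the squared $L^2$-norm of $K_n$ restricted to the single column $s\in[(L-1)/n,L/n)$, on which Proposition~\ref{prop:kernelbound} yields $K_n\le C_n K_\epsilon$ pointwise. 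Using $\phi(t)\asymp\sqrt{t}$ and $\theta(t)\asymp t$ near zero (Lemma~\ref{phi_bounds}), an explicit calculation gives $\int_s^1 K_\epsilon(s,t)^2\,dt = O(s^{-4\epsilon})$, so this factor is $O_p(n^{-1+4\epsilon}(\log n)^{-4\epsilon})$ for $\epsilon$ sufficiently small. For the first factor, the deterministic part of $\log \Pi_{i,L} = \sum_{k=i}^{L-1}(\log Y_k - \log X_k)$ is $O(1)$ by the small-$k$ analog of \eqref{mean_convergence} and Proposition~\ref{prop:FineMin}, and the fluctuation is controlled in moments using Gaussian concentration (Proposition~\ref{prop:Concentration}), yielding $E[\Pi_{i,L}^{2}] = O(\mathrm{polylog}\,n)$ uniformly in $i\le i_\ast$ and therefore $\sum_{i\le i_\ast}\Pi_{i,L}^2 = O_p(\mathrm{polylog}\,n)$.

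For the remaining range $j\le L$ there are only $O((\log n)^2)$ cells. Work on the high-probability event $G_n = \{|X_k-x_k^o|+|Y_k-y_k^o|\le c''\sqrt{\log n/n}\text{ for all }k\le L\}$, whose complement has probability $O(n^{-4})$ by Proposition~\ref{prop:Concentration}. On $G_n$ every $X_k,Y_k$ is bounded below by a positive multiple of $\sqrt{k/n}$, so each $K_{ij}^2$ is bounded by a power of $n$; this polynomial bound multiplied by the $1/n^2$ prefactor and $O((\log n)^2)$ cell count, together with a Cauchy--Schwarz estimate on $G_n^c$ using the Brascamp--Lieb-type tail of Lemma~\ref{prop:oldconc}, yields $o_p(1)$. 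The main obstacle is the moment estimate $E[\Pi_{i,L}^2]=O(\mathrm{polylog}\,n)$ for very small $i$: the Lipschitz constant of $\log(Y_k/X_k)$ blows up near the singularity, so Lemma~\ref{lem:LSI} cannot be applied directly and must first be combined with a smooth truncation keeping $X_k,Y_k$ bounded away from zero, matched against the fact that for $k=O(1)$ the minimizer $x_k^o$ is itself of order $1/\sqrt{n}$, exactly at the scale of the Gaussian fluctuations.
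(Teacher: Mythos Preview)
Your decomposition is the same as the paper's: split at a level $L\asymp\log n$, use Proposition~\ref{prop:kernelbound} for the column $j>L$, and handle the corner $j\le L$ separately. The factorization $K_{ij}=K_{L,j}\Pi_{i,L}$ and the estimate on the second factor are correct. The gap is exactly where you say it is, and it is genuine rather than a technicality you can patch by truncation plus Lemma~\ref{lem:LSI}.

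The claim ``on $G_n$ every $X_k,Y_k$ is bounded below by a positive multiple of $\sqrt{k/n}$'' is false for $k=O(1)$: since $x_k^o\asymp\sqrt{k/n}\asymp n^{-1/2}$ while the concentration window is $c''\sqrt{\log n/n}\gg n^{-1/2}$, the event $G_n$ does not even force $X_1$ away from zero. Hence your ``polynomial in $n$'' bound on $K_{ij}^2$ for $i,j\le L$ has no content, and the $j\le L$ block is not controlled. The same failure undermines the moment bound $E[\Pi_{i,L}^2]=O(\mathrm{polylog}\,n)$ for small $i$: any smooth truncation that keeps $X_k$ bounded away from zero must cut at a level comparable to $x_k^o$ itself, so the truncated and untruncated variables differ on an event whose probability Gaussian concentration cannot make small.

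The paper supplies the missing input as Lemma~\ref{cheaplemma}: on events $\mathcal B_n$ of probability $1-o(1)$ one has $Y_k/X_k\le 1+c'\sqrt{\log_2 n}/(\sqrt{n}\,\phi(k/n))$ for \emph{all} $k\le c\log n$, which immediately bounds $\Pi_{i,L}$ (and the corner block) by an explicit function of $s,t$. The proof of that lemma is where the new idea enters: for $k$ of order one, the conditional density of $X_k$ given its neighbors is compared to a $\chi_{\beta(k+a)}$ law (Lemma~\ref{chi_bound}), and one uses elementary $\chi$-tail bounds rather than Lipschitz concentration to get $P(X_k\le t)\lesssim (\sqrt{n}\,t)^{\beta(k+a)}$. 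This is precisely the tool that replaces Gaussian concentration at the singularity; without it, or something equivalent, the argument does not close.
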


We mention that  in establishing the convergence of the classical $\beta$-Laguerre matrix model to 
$\mathrm{SBO}_{\beta,a}$ in \cite{RR} a single dominating kernel was relatively easy to come by. On the other hand, for the ``spiked" hard edge considered in \cite{RR1} in which case one deals with matrix kernel operators a similar cutoff procedure was required $O(\log n)$ steps away from the singularity.

In any case, one may now argue as follows.  Given any subsequence of operators $K_n$, choose a further subsequence $K_{n'}$ and a probability space on which \eqref{vanishonstrip} takes place almost surely and the bound \eqref{ineq:kernelbound} holds almost surely with the tight sequence $C_{n'}$ replaced by some deterministic constant (bounding the chosen subsequential limit of tight the prefactors).  Presuming the pointwise convergence $K_{n'} \rightarrow K$ of kernels 
also takes place almost surely on the same space (which may be achieved by taking yet a further subequence), it follows that
$ \int_0^1 \int_0^t | K_{n'} (s,t) - K(s,t) |^2 ds dt \rightarrow 0$ with probability one.  This completes the proof of Theorem \ref{thm:main} and hence the main result.

The proofs of Propositions \ref{prop:kernelbound} and \ref{cheapprop} occupy the next two subsections. 

\subsection{Tight kernel bound away from the singularity}

Proposition \ref{prop:kernelbound} is a consequence of the following.

\begin{lemma}
\label{LILtype}
Define $h(t) =  (1 + \log  \frac{1}{\theta(t)} )^p$. Then for any $p \in (1/2, 1)$ and $c = c(V, \beta, a)$ large enough, the sequence 
\begin{equation}
\label{eq:LIL}
       \max_{k \ge c \log n }  \frac{1}{h(k/n)} \sum_{j = k}^n \log  \left( \frac{Y_j/y_j^o}{X_j/ x_j^o } \right)   
\end{equation}
is tight.
\end{lemma}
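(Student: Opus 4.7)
\bigskip
\noindent
\emph{Plan of proof.} The plan is to upgrade the finite-dimensional CLT of Proposition \ref{prop:CLT} to an LIL-type maximal bound by combining the Log-Sobolev concentration of Lemma \ref{lem:LSI} with a dyadic blocking argument in the ``Brownian time'' $\tau(k) := \log(1/\theta(k/n))$, which runs from $0$ at $k = n$ to $\tau \asymp \log n$ at $k = c\log n$. Viewing $h(k/n) = (1 + \tau(k))^p$ as the natural LIL envelope for
\[
T_k := \sum_{j=k}^n \log\!\left( \frac{Y_j/y_j^o}{X_j/x_j^o} \right),
\]
I first reduce to the linearized sum $L_k := \sum_{j=k}^n [(Y_j - y_j^o) - (X_j - x_j^o)]/\phi(j/n)$ by Taylor expansion on the concentration event $Q = \{|X_j - x_j^o|,|Y_j - y_j^o| \le c_1\sqrt{\log n/n}\;\forall j\}$, which has $P$-probability $1 - o(n^{-M})$ for any $M$ by Proposition \ref{prop:Concentration}. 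Taking $c$ large ensures that the relative deviations $|X_j - x_j^o|/x_j^o$ are uniformly small on $Q$ for $j \ge c\log n$, so the expansion is valid termwise; the quadratic remainder is handled via the $X$--$Y$ cancellation $E[(X_j - x_j^o)^2 - (Y_j - y_j^o)^2] = O((\log j)^{5/2}/(n\sqrt j))$ of Corollary \ref{prop:Shifts} together with LSI concentration of the resulting quadratic form around its mean.

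For the linearized process, Lemma \ref{lem:LSI} applied to $L_k$ (made globally Lipschitz by truncating the coordinates outside $Q$) gives
\[
P\!\left(|L_k - EL_k| > t\right) \le 2\exp\!\left(-\,c\, t^2/\tau(k)\right),
\]
since $\|\nabla L_k\|_2^2 \le 2\sum_{j=k}^n \phi(j/n)^{-2} \le C n\,\tau(k)$ by Lemma \ref{phi_bounds}, with the analogous estimate for increments $L_k - L_{k'}$. I partition $[c\log n, n]$ into blocks $I_j := [k_{j+1}, k_j]$, with $k_j := \sup\{k : \tau(k) \ge j\}$, so that $\tau$ varies by at most one on $I_j$ and $j$ runs from $0$ to $J \asymp \log n$. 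The endpoint bound gives
\[
P\!\left(|L_{k_j} - EL_{k_j}| > M(1+j)^p\right) \le 2\exp\!\left(-cM^2(1+j)^{2p-1}\right).
\]
Within each block, the sub-Gaussian pseudo-metric $d(k,k') := \sqrt{|\tau(k) - \tau(k')|/\beta}$ describes a one-dimensional curve of $d$-diameter $O(1)$, so Dudley's chaining bound gives $E\max_{k \in I_j}|L_k - L_{k_j}| = O(1)$ uniformly in $j$ and $n$; applying Lemma \ref{lem:LSI} to the functional $(X,Y) \mapsto \max_{k \in I_j} L_k$ (whose $\ell^2$-Lipschitz constant is $O(\sqrt{n})$) upgrades this expectation to a sub-Gaussian tail with parameter $O(1)$. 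Since $p > 1/2$, summing both tail bounds over $j$ produces a quantity bounded by $C e^{-cM^2}$ that vanishes as $M \to \infty$, uniformly in $n$.

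The main obstacle is the mean $EL_{k_j}$: a naive term-by-term application of Corollary \ref{prop:Shifts} overshoots by powers of $\log n$ in the critical small-$k$ regime. To achieve $|EL_{k_j}| \le M(1+j)^p/2$, one must pair the first-order shift $E[(Y_j - y_j^o) - (X_j - x_j^o)]$ with the second-order shift $E[(X_j - x_j^o)^2 - (Y_j - y_j^o)^2]/\phi(j/n)^2$ at each $j$ and exploit the leading-order cancellation between them --- precisely the effect that makes the centering in Proposition \ref{prop:CLT} correct. A higher-order Gaussian-approximation expansion in the style of the proof of Corollary \ref{prop:Shifts} (carrying one more order than what yielded \eqref{eq:MeanShift}) supplies the needed bound, after which the tightness of the maximum follows by combining the above ingredients.
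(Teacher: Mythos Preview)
Your overall architecture --- LSI concentration giving sub-Gaussian tails, a dyadic decomposition in the time variable $\tau(k)\asymp\log(n/k)$, Dudley's entropy bound to control the within-block maximum, and summing the block-endpoint tails using $p>1/2$ --- is exactly what the paper does.  The paper's blocks $\{k:\,e^{-m-1}<k/n\le e^{-m}\}$ are your blocks $I_j$, and the paper invokes the same Proposition \ref{prop:Dudley} for the in-block supremum.

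Where you diverge is in how you render the sum Lipschitz.  You linearize on $Q$ and then handle the quadratic piece separately by LSI.  The paper instead truncates the logarithm itself: set $G_j(z)=\log(z/z_j^o)$ for $|z-z_j^o|\le\delta_n$ and freeze it outside, then work directly with $S_k=\sum_{j\ge k}[G_j(Y_j)-G_j(X_j)]$.  This $S_k$ is globally Lipschitz with $\|\nabla S_k\|_2^2\le Cn\log(n/k)$, so \eqref{SLSI} follows immediately, and no separate treatment of quadratic or higher terms is needed for the fluctuation part.  The Taylor expansion is only used in \emph{expectation}, to bound $ES_k$.  Your route works too, but you should be aware that the pointwise cubic remainder on $Q$ is of order $\sum_{j\ge k}(\log n/j)^{3/2}\asymp\log n$ at $k\sim c\log n$, which exceeds $h(k/n)\asymp(\log n)^p$; so you cannot discard it by a crude sup-norm bound and must apply LSI to the truncated higher-order remainder as well (its gradient is small enough that this succeeds).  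You gloss over this.

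More importantly, your ``main obstacle'' is not an obstacle.  A direct termwise use of Corollary \ref{prop:Shifts} gives
\[
|EL_k|\le\sum_{j\ge k}\frac{|E[(X_j-x_j^o)-(Y_j-y_j^o)]|}{\phi(j/n)}\le C\sum_{j\ge k}\frac{(\log j)^5}{\sqrt n\,j}\cdot\sqrt{\tfrac{n}{j}}=C\sum_{j\ge k}\frac{(\log j)^5}{j^{3/2}},
\]
which is $O((\log k)^5/\sqrt k)=o(1)$ uniformly for $k\ge c\log n$; the second-moment difference in \eqref{eq:VarShift} behaves the same way.  This is precisely the computation the paper performs at the end of the proof to show $\sup_k ES_k$ is bounded.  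No pairing of first and second order terms, and no higher-order expansion beyond Corollary \ref{prop:Shifts}, is required.
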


The results of Sections \ref{sec:Min} and \ref{sec:Gauss} (concentration about the minimizers and proximity of the coarse and true minimizers) show that
$ \max_{k > c \log n} (\phi(k/n)/X_k)$ is also tight, controlling the prefactor to the random product 
appearing in the definition of $K_n(s,t)$. In particular then there is  a tight random sequence $C_n$ such that
$  1/X_k \le C_n /\phi(k/n) $ and $ \sum_{j=k}^n   \log  ( \frac{Y_j/y_j^0}{X_j/ x_j^0 } )  \le C_n h(k/n)$, at least for $k$ in the prescribed range. But that means that
\begin{align}
\label{Kagain}
 K_n(s,t) &  = X_{\lfloor nt \rfloor}^{-1}  \, 
 e^{ \sum_{k  =  \lfloor ns \rfloor}^{\lfloor nt \rfloor - 1 }   \log  \Bigl( (Y_k/y_k^o) /  (X_k/ x_k^o ) \Bigr) }    \, 
 e^{-  \sum_{k =   \lfloor ns \rfloor}^{\lfloor nt \rfloor - 1 }  \log \Bigl( x_k^o/y_k^o \Bigr) } \nonumber \\  
 & \le C_n  \frac{  e^{C_n (h(s)+ h(t) )} }{ (\phi(s) \phi(t) )^{1/2 + \epsilon}} \left( \frac{\theta(s) }{\theta(t)} \right)^{\frac{a}{2} + \frac{1}{4}} , \quad \mbox{ for } s,t > c \frac{\log n}{n}.
\end{align}
Here we have used that: 
\begin{equation*}
 \sum_{j= \lfloor nt \rfloor}^n \log  ( x_j^o/y_j^o ) \le (a/2+ 1/4) \log \theta(t) - 1/2\log \phi(t) +  c' (1 + \sqrt{\log \phi(t)} )
\end{equation*}
for $nt > c \log n$ and some constant $c'$, recall Proposition \ref{prop:FineMin}. This last (error) term is  then absorbed  into the $C_n \phi(t)^{\epsilon} $ in \eqref{Kagain}. That estimate in turn implies the claimed inequality \eqref{ineq:kernelbound} of  Proposition \ref{prop:kernelbound} as for any positive $c$ and $\epsilon$ and $p \in [0,1)$ there is a $c'' = c(\epsilon, p)$ so that $c (1+a)^p \le c' + \epsilon a$ for all $a \ge 0$. Afterwards $\epsilon$ is adjusted. Note here and below we write $K_n(s,t) =
 X_{\lfloor nt \rfloor}^{-1} 
\prod_{k= \lfloor ns \rfloor}^{ \lfloor nt \rfloor -1}
(Y_k /X_k)$. This is not quite accurate, compare the definition \eqref{discreteK}, but can be considered a convenient shorthand and will not make any difference for the level of estimate required here.

As for Lemma  \ref{LILtype} we need one last ingredient. This is due to Dudley  \cite{Dud} (though see Proposition 2.2.10 of \cite{Tal} for a succinct proof).

\begin{proposition}
\label{prop:Dudley}
Consider a metric space $(T, d)$ and  a centered process $(Z_t)_{t\in T}$ with law $\mathbf{P}$ satisfying
\begin{equation}
\label{Dud1}
  \mathbf{P} ( |Z_s - Z_t | \ge \lambda ) \le 2 e^{-\frac{ \lambda^2}{2 d(s,t)^2}}
\end{equation}
for all $\lambda > 0$. Then there is a universal constant $c$ such that
\begin{equation}
\label{Dud2}
  \mathbf{E} \sup_{t \in T} Z_t \le c \sum_{q \ge 0} 2^{q/2} e_q(T),
\end{equation}
in which $e_q(T) = \inf \sup_{t \in T} d(t, T_q)$ and the infimum is over all $T_q \subset T$ of cardinality $\le 2^{2^q}$.
\end{proposition}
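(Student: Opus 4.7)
The plan is to run the standard generic-chaining argument (as in Talagrand's presentation). First I would reduce to $T$ countable (and, by monotone convergence, even finite), so that measurability and a.s.\ convergence of chains are not in question. For each $q \ge 0$ I would fix a set $T_q \subset T$ with $|T_q| \le 2^{2^q}$ that realizes, up to a factor of two, the infimum defining $e_q(T)$, taking $T_0 = \{t_0\}$ to be a single base point. To each $t \in T$ I attach a nearest representative $\pi_q(t) \in T_q$, so that $d(t, \pi_q(t)) \le 2 e_q(T)$ and, by the triangle inequality,
\[
d(\pi_q(t), \pi_{q+1}(t)) \le 2\bigl(e_q(T) + e_{q+1}(T)\bigr).
\]
Because $T$ is finite, for $q$ sufficiently large $\pi_q(t) = t$ and the telescoping identity
\[
Z_t - Z_{t_0} = \sum_{q \ge 0} \bigl(Z_{\pi_{q+1}(t)} - Z_{\pi_q(t)}\bigr)
\]
holds with only finitely many nonzero terms.

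Next I would control each chain level. As $t$ ranges, the pair $(\pi_q(t), \pi_{q+1}(t))$ takes at most $|T_{q+1}| \le 2^{2^{q+1}}$ distinct values, and each increment $Z_{\pi_{q+1}(t)} - Z_{\pi_q(t)}$ is centered and sub-Gaussian with scale $d(\pi_q(t), \pi_{q+1}(t)) \le 2(e_q(T) + e_{q+1}(T))$ by \eqref{Dud1}. A union bound then yields, for every $s \ge 0$,
\[
\mathbf{P} \Bigl( \sup_{t \in T} \bigl| Z_{\pi_{q+1}(t)} - Z_{\pi_q(t)} \bigr| \, > \, 2 (e_q(T) + e_{q+1}(T)) \bigl( \sqrt{2 \ln 2} \cdot 2^{(q+1)/2} + s \bigr) \Bigr) \le 2 e^{-s^2/2}.
\]
Integrating this tail in $s$ produces
\[
\mathbf{E} \sup_{t \in T} \bigl| Z_{\pi_{q+1}(t)} - Z_{\pi_q(t)} \bigr| \le c' \cdot 2^{q/2} \bigl( e_q(T) + e_{q+1}(T) \bigr)
\]
for a universal constant $c'$.

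Summing over $q \ge 0$ and applying the triangle inequality to the telescoping identity gives
\[
\mathbf{E} \sup_{t \in T} Z_t = \mathbf{E} \sup_{t \in T} (Z_t - Z_{t_0}) \le \sum_{q \ge 0} \mathbf{E} \sup_{t \in T} \bigl| Z_{\pi_{q+1}(t)} - Z_{\pi_q(t)} \bigr| \le c \sum_{q \ge 0} 2^{q/2} e_q(T),
\]
after reindexing and absorbing universal factors. The main technical point that requires care is the calibration at level $q$: the threshold must be chosen $\asymp 2^{q/2}$ times the metric scale in order to beat the $2^{2^{q+1}}$ alternatives in the union bound, yet small enough to leave an integrable slack variable $s$ that yields the $L^1$-bound after summation. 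Everything else is bookkeeping.
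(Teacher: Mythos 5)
The paper does not reproduce a proof of this proposition; it points to Dudley \cite{Dud} and to Proposition 2.2.10 of Talagrand \cite{Tal}, and your argument is exactly the standard generic-chaining proof given there, with the correct calibration $\lambda_q \asymp 2^{q/2}(e_q + e_{q+1})$ against the $2^{2^{q+1}}$-fold union bound. Two points deserve cleaning up. The smaller one: your claim that ``the pair $(\pi_q(t), \pi_{q+1}(t))$ takes at most $|T_{q+1}|$ distinct values'' holds only if $\pi_q$ factors through $\pi_{q+1}$ (which you can arrange by using nested partitions, or by setting $\pi_q(t) := \pi_q(\pi_{q+1}(t))$); for independently chosen nearest-point maps the honest count is $|T_q|\,|T_{q+1}| \le 2^{3\cdot 2^q}$, which still produces a $2^{q/2}$ threshold and only moves the universal constant, but the sentence as written is not quite right.

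The more substantive point concerns $q=0$. You take $T_0 = \{t_0\}$ a singleton and then invoke $d(t, \pi_0(t)) \le 2\, e_0(T)$, but the proposition defines $e_0$ via an infimum over $T_0$ with $|T_0| \le 2^{2^0} = 2$, and a singleton need not come within any constant factor of that two-point infimum. In fact, as literally stated the proposition fails: take $T = \{a,b\}$ with $d(a,b) = D$; then $e_q(T) = 0$ for every $q$, yet $\mathbf{E}\sup_{T} Z_t \ge \tfrac{1}{2}\,\mathbf{E}|Z_a - Z_b|$ can be as large as one likes. The intended (Talagrand) convention is $|T_0| = 1$, so that $e_0(T)$ is the radius of $T$; under that convention your singleton choice and the estimate $d(t,\pi_0(t)) \le 2 e_0$ are exactly right and the chain closes. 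So the gap is really a slip in the cardinality exponent in the statement of the proposition rather than in your chaining argument --- harmless in this paper's application, where $T$ is an interval and $e_0 \asymp \mathrm{diam}(T)$ under either convention --- but you should flag the convention you are using rather than silently conflating the two.
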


\begin{proof}[Proof of Lemma \ref{LILtype}] The first step is to truncate the logarithm.
With $c' > 1$  to be chosen momentarily let
$\delta_n = c' \sqrt{\frac{\log n}{n}}$, and for each  $j \in [ c \log n , n ]$, where $c \ge c'$ will also be chosen along the way, define:
\begin{equation}
 \label{eq:cutoff}
   G_j(z)  =  \left\{  \begin{array}{ll}  \log(z/z_j^o),  & \mbox{ for } |z - z_j^o| \le \delta_n,  \ \\ 
                                                         \log(1- \delta_n/z_j^o) \mbox{ or } \log(1 +    \delta_n/z_j^o ),  & \mbox { for }
                                                         z \le z_j^o - \delta_n \mbox{ or } z \ge  z_j^o + \delta_n. \end{array}   \right.
\end{equation}
Here $z_j^o$ denotes the coordinate of the minimizer $x_j^o$ or $y_j^o$ according to whether $G_j$ is to be evaluated at $x_j$ or $y_j$.  Since both $x_j^o$ and $y_j^o$ can be bounded below by a small constant multiple of $\phi(j/n)$ which in turn is $O(\sqrt{j/n})$ for small $j$, by choice of $c = c(c')$ we have that: 
\begin{equation}
\label{eq:minlimbds}
\delta_n/x_j^o \vee \delta_n /y_j^o \le \frac{1}{2},  \quad (x_j^o - \delta_n) \wedge (y_j^o - \delta_n) \ge  \frac{1}{2} \sqrt{ \frac{j}{n}},  \mbox{ for all } j \ge c \log n.
\end{equation}
These bounds at least guarantee that \eqref{eq:cutoff} is sensible. Further, for all $k$ in the range of interest the sum
\begin{equation}
\label{eq:newsum}
  S_k = S_k(X,Y) = \sum_{j=k}^n G_j(Y_j) - G_j(X_j)
\end{equation}
agrees with $\sum_{j=k}^n \log  \left( \frac{Y_j/y_j^o}{X_j/ x_j^o } \right)  $ on the event $Q = \{ |X_j - x_j^o |, |Y_j - y_j^o| \le \delta_n \mbox{ for all } j \ge  \log n \}$. And then once again Proposition \ref{prop:Concentration} along with a union bound  implies $P(Q) = 1 - o(1)$ granted  
$c' = c'(V,\beta, a)$  is chosen large enough. Hence, to prove the claim it suffices to show that
\begin{equation}
\label{eq:newclaim}
    \max_{k \ge c \log n} \frac{S_k}{h(k/n)}  \quad \mbox{ is tight}.
\end{equation}
Where now $c = c(V, \beta, a)$ is fixed after an appropriate choice of $c'$.

Next we note that, as a map taking $(x_k,\dots x_n; y_k, \dots y_n) \mapsto \RR$, $S_k$ has square Lipschitz norm bounded as in
\begin{equation}
\label{SLip}
   \| \nabla S_k(x,y) \|_{2}^2 \le \sum_{j=k}^n \frac{1}{(x_j^o - \delta_n)^2} + \frac{1}{(y_j^o - \delta_n)^2} \le 
   8 n \sum_{j=k}^n  \frac{1}{j} \le 16 n \log (n/k),
\end{equation}
where the second inequality in \eqref{eq:minlimbds} is used. Therefore, by Lemma \ref{lem:LSI}
\begin{equation}
\label{SLSI}
   P \Bigl(    S_k(X,Y)  \ge E S_k(X,Y)  +  \lambda \Bigr)   \le  \exp { \left( -  \frac{\lambda^2}{ c'' \log \frac{n}{k} }   \right)},
\end{equation}
for all $\lambda > 0$ and  all $k \ge c \log n$ with yet another constant $c'' = c''(V, \beta,a)$

Turning to \eqref{eq:newclaim}, we introduce 
\begin{equation}
   F_m = \max_{e^{-m-1} <  \frac{k}{n} \le e^{-m}}  S_k, \quad  \mbox{  for } m = 1, 2, \dots, 
\end{equation}
and, noting that  it is only the small (as in $o(n)$) values of $k$ which really require attention, 
estimate as follows:
\begin{align}
\label{breakupbound}
 P \left( \max_{ c \log n \le k \le n/4}  \frac{S_k}{h(k/n)}  > \lambda \right) 
 & \le  \sum_{m=1}^{\infty} P \left(  F_m >  \lambda h( e^{-m}) \right) \\
 & \le \sum_{m=1}^{\infty} \exp \left( - (\lambda m^p  - E F_m )^2/ 4 c'' m \right). \nonumber
\end{align} 
The second inequality is due to \eqref{SLSI} along with the fact that the maximum function has Lipschitz norm one. 
Recalling that $ p> 1/2$, the proof will be finished by showing that $E F_m$ is uniformly bounded in $m$ and $n$  (in which case the right hand side above can be made as small as one likes by taking $\lambda  \uparrow \infty$). This is where Proposition
\ref{prop:Dudley} comes in.

Another application for \eqref{SLSI} shows that the condition \eqref{Dud1} of that Proposition is satisfied with the discrete process
$k \mapsto S_k - ES_k$ in the role of $t \mapsto Z_t$ with $T =[n e^{-m-1}, n e^{-m}]$ and metric $d(k,\ell)$ equal to a constant  ({\em{i.e.}}, independent of both $n$ and $m$) multiple of
$
  \sqrt{  | \log k/\ell | }. 
$
 In particular, $T = T(m,n)$ has diameter bounded independently of $n$ or $m$.
Thus, the $e_q(T)$ in the punchline \eqref{Dud2} can be bounded by $ \sup_{k \in T} d(k, T_q) $ for an equally spaced $T_q$,
with the result that $ e_q = O ( 2^{-2^q})$ and  
$$
    E \left[ \max_{e^{-m-1} \le k/n  \le e^{-m}} (S_k - E S_k )  \right] \le c''',
$$
where $c'''$ can be chosen fixed for all $n$ and $m$. 

It is left is to demonstrate that  $\sup_{k> c \log n} E S_k$ is similarly bounded. Since $E \log (Z_k/z_k^o) \one_{Q^c}$ (for $(Z,z)$ either $(X,x)$ or $(Y, y)$) can made exponentially small in $n$,  
we have
$$
 E  \log  \left( \frac{Y_k/y_k^o}{X_k/ x_k^o } \right)  = E \left(   \frac{X_k - x_k^o}{x_k^o}  -  \frac{Y_k - y_k^o}{y_k^o}\right) 
 + 
 E  \left(   \frac{(X_k - x_k^o)^2}{2 (x_k^o)^2}  -  \frac{(Y_k - y_k^o)^2}{2 (y_k^o)^2}\right) + O (k^{-3/2}),
$$
after restoring the integrals to the full domain (from $Q$) and using that $E |Z_k -z_k^o |^3 = O (n^{-3/2} )$ and once more
that $z_k^o \ge c \sqrt{k/n}$ in the last term. But by Corollary \ref{prop:Shifts} we  have for example,
\begin{align*}
   \sum_{k = c \log n}^n  \left| E \left[   \frac{X_k - x_k^o}{x_k^o}  -  \frac{Y_k - y_k^o}{y_k^o} \right]  \right| & =   \sum_{k = c \log n}^n  \phi(k/n)^{-1}  |E (X_k - x_k^o) - (Y_k - y_k^o) |  (1+o(1)) \\
    & \le c'' \sum_{k = c \log n}^n (\log k)^5 k^{-3/2} = o(1),
\end{align*}
with a similar conclusion for the sum of the mean-square differences.
\end{proof}

\subsection{Near the singularity}

Proposition \ref{cheapprop} actually uses Proposition \ref{prop:kernelbound} as input, in addition to the next rough estimate. Again the strategy is similar to that in \cite{RR1} (see Sections 3.5-3.6 there).

\begin{lemma} 
\label{cheaplemma}
For any $c>0$ there exist events $\mathcal{B}_n$ of probability tending to one on  which
\begin{equation}
\label{cheapbound}
    K_n(s,t) \le \frac{c' }{\phi(t)}  \exp{ \left(  \kappa_n \int_s^t \frac{d \tau}{\phi({\tau})} \right)} \quad
  \mbox{ for }  0 \le s \le t \le c \frac{\log n}{n},
\end{equation}
with a constant $c' = c(V, \beta, a, c)$ and $\kappa_n = c' \sqrt{ n \log_2 n}$.
\end{lemma}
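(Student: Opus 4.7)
The plan is to write $K_n(s,t) = X_j^{-1}\exp(S_{ij})$ on $\Gamma_{ij}$, with $S_{ij}:=\sum_{k=i}^{j-1}\log(Y_k/X_k)$, and construct $\mathcal{B}_n = \mathcal{B}_n^{(1)}\cap\mathcal{B}_n^{(2)}$ as the intersection of a ``no--small--values'' event controlling each $X_k,Y_k$ from below, and a concentration event controlling the partial sums $S_{ij}$ uniformly over $1\le i \le j \le c\log n$. Both events will be shown to have probability tending to one.

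For $\mathcal{B}_n^{(1)}$, set $\omega_n=\exp(c_0\sqrt{\log_2 n})$ and take $\mathcal{B}_n^{(1)}=\{X_k,Y_k\ge\phi(k/n)/\omega_n\text{ for all } k\le c\log n\}$.  The chi-type factor $x_k^{\beta(k+a)-1}$ in the density \eqref{thelaw} produces the relevant lower tail: integrating out the other variables and using a Laplace expansion about the constrained minimizer (the requisite convexity and minimizer estimates being furnished by Section~\ref{sec:Min}), I would obtain $P(X_k\le\epsilon\phi(k/n))\le C\epsilon^{\beta(k+a)}$, and analogously $P(Y_k\le\epsilon\phi(k/n))\le C\epsilon^{\beta k}$.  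Substituting $\epsilon=1/\omega_n$ and union-bounding gives $P((\mathcal{B}_n^{(1)})^c) \le C\sum_k\omega_n^{-\beta(k+a)} \lesssim \omega_n^{-\beta(1+a)}\to 0$.

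For $\mathcal{B}_n^{(2)}$, on $\mathcal{B}_n^{(1)}$ the sum $S_{ij}$ agrees with a smoothly truncated version $\tilde S_{ij}$, obtained by extending each $\log z$ linearly below $\phi(k/n)/\omega_n$, whose Lipschitz norm satisfies $\|\tilde S_{ij}\|_\text{Lip}^2\le C\omega_n^2 n\log(j/i)$.  Lemma~\ref{lem:LSI}, whose constant absorbs the $n$ from the Bakry-Emery LSI for $P$, then yields
\[
P\big(|\tilde S_{ij} - E\tilde S_{ij}|>\lambda\big)\le 2\exp\!\left(-\frac{c\lambda^2}{\omega_n^2\log(j/i)}\right).
\]
Setting $\lambda = C_1\omega_n\sqrt{\log_2 n\,\log(j/i)}$ and union-bounding over the $O((\log n)^2)$ pairs $(i,j)$ yields $P(\mathcal{B}_n^{(2)})\to 1$, where $\mathcal{B}_n^{(2)}=\{|S_{ij}|\le |E\tilde S_{ij}|+\lambda\text{ for all } (i,j)\}$.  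A crude bound $|E\tilde S_{ij}|=O(\log(j/i))$, obtained by direct moment estimates on $\log X_k, \log Y_k$ using the chi-type density structure, is absorbed into the same scale.

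Combining: on $\mathcal{B}_n$, for $(s,t)\in\Gamma_{ij}$,
\[
K_n(s,t)\le \frac{\omega_n}{\phi(j/n)}\exp\big(C\omega_n\sqrt{\log_2 n\,\log(j/i)}+C\log(j/i)\big),
\]
which must be bounded above by $(c'/\phi(t))\exp(\kappa_n\int_s^t d\tau/\phi(\tau))$.  Because $\phi(\tau)\sim C'\sqrt{\tau}$ near zero, $\kappa_n\int_s^t d\tau/\phi(\tau)\gtrsim c''\sqrt{\log_2 n}\,(\sqrt{j}-\sqrt{i})$, and after calibrating $c_0$ (in $\omega_n$) against $c'$ (in $\kappa_n$) this exponential factor absorbs both the prefactor $\omega_n$ and the concentration scale.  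The main technical obstacle is precisely this absorption step: one must verify that the polylogarithmic factors introduced by the truncation at scale $\omega_n$ fit inside $\exp(\kappa_n\int)$ uniformly for $1\le i \le j \le c\log n$, in particular in the ``near--diagonal'' regime $j\approx i$ where $\int_s^t d\tau/\phi(\tau)$ is small, which is the reason the rate $\kappa_n=c'\sqrt{n\log_2 n}$ (rather than something smaller like $\sqrt{n\log_2\log n}$) is forced upon us.
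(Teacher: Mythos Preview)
Your strategy has a genuine gap in the absorption step, and no calibration of constants will close it. The issue is a tension between your two events: for $P(\mathcal{B}_n^{(1)})\to 1$ you are forced to take $\omega_n\to\infty$ (your choice $\omega_n=\exp(c_0\sqrt{\log_2 n})$ works, as does anything with $\omega_n\to\infty$), but then the Lipschitz constant of each truncated $\log$ is $\omega_n/\phi(k/n)$, and the LSI bound on $\tilde S_{ij}$ acquires an unavoidable factor $\omega_n$. Concretely, for $i,j=O(1)$ (say $i=1$, $j=2$) you need
\[
\omega_n\sqrt{\log_2 n\cdot\log(j/i)}\ \lesssim\ \kappa_n\!\int_s^t\!\frac{d\tau}{\phi(\tau)}\ \sim\ \sqrt{\log_2 n}\,(\sqrt{j}-\sqrt{i}),
\]
which forces $\omega_n=O(1)$, contradicting $\omega_n\to\infty$. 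The same obstruction hits the prefactor: on the diagonal block $i=j$ you need $1/X_j\le c'/\phi(t)$, but $\mathcal{B}_n^{(1)}$ only delivers $1/X_j\le \omega_n/\phi(j/n)$. The underlying reason is structural: for $k=O(1)$ the variable $X_k$ genuinely has order--one relative fluctuations (heuristically $X_k\sim \chi_{\beta(k+a)}/\sqrt{n\beta}$), so $\log(Y_k/X_k)$ has order--one fluctuations and no LSI argument on $S_{ij}$ can beat that.

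The paper does \emph{not} use LSI on the sums $S_{ij}$ for this range of indices. Instead it establishes the \emph{pointwise} ratio bound $Y_k/X_k\le 1+c'\sqrt{\log_2 n}/(\sqrt{n}\,\phi(k/n))$ on a high--probability event, and then simply multiplies via $(1+a)\le e^a$. For $k\gtrsim\log_2 n$ this follows from the Gaussian concentration of the individual coordinates (Proposition~\ref{prop:Concentration}), since the deviation $\sqrt{\log_2 n/n}$ is small relative to $\phi(k/n)\sim\sqrt{k/n}$. For the remaining indices the paper splits again: $Y_k$ is still controlled by concentration, while $X_k$ is bounded from \emph{below} by comparison with a genuine $\chi$ variable (Lemma~\ref{chi_bound}), giving $X_k\gtrsim\phi(k/n)$ for $\log_4 n\le k\lesssim\log_2 n$ and the crude $X_k\gtrsim (n\log_3 n)^{-1/2}$ for $k\le\log_4 n$. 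In the last regime one only gets $Y_k/X_k=O(\log_3 n)$, but there are merely $\log_4 n$ such factors and their total contribution is dominated by $\kappa_n\int$. Your framework can be repaired by inserting exactly this chi--comparison layer for the smallest indices and dropping the LSI step there; what remains (your concentration argument for larger $k$, with $\omega_n$ replaced by an $O(1)$ constant) is then fine.
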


Granted this we will first prove the proposition and return to the proof of Lemma \ref{cheaplemma} afterwards.

\begin{proof}[Proof of Proposition \ref{cheapprop}]  As the integral in question in \eqref{vanishonstrip} is increasing in $c$ we may as well assume that it is large enough so that Proposition \ref{prop:kernelbound} is in place. In addition, we will invoke that proposition in the following way.  Denote by 
$\mathcal{A}_n$ the event that the random sequence $C_n$ appearing \eqref{ineq:kernelbound} exceeds some $c'$. By choice of $c'$ we can take the probability of $\mathcal{A}_n$ as close to one as we like. We are left to show that the appraisal \eqref{vanishonstrip} takes place on the intersection of $\mathcal{A}_n$ and $\mathcal{B}_n$. 

Now denoting  $\delta_n  = c \frac{\log n}{n}$ and on $\mathcal{B}_n$,  Lemma \ref{cheaplemma} gives that, 
\begin{align}
\label{firstintegral}
  \iint_{  0 \le s \le t  \le \delta_n}  | K_n(s,t)|^2 ds dt  & \le  c' \int_0^{\delta_n} \int_0^t \frac{1}{t} e^{\kappa_n (\sqrt{t} - \sqrt{s} )} ds dt \\
    & = 2 c' \int_0^{\delta_n}  \left( \frac{e^{\kappa_n \sqrt{t}} - 1}{\kappa_n^2 t} - \frac{1}{\kappa_n \sqrt{t}} \right) dt \nonumber \\
    & \le c' \, \frac{e^{\kappa_n \sqrt{{\delta_n}}}}{\kappa_n^2}, \nonumber
\end{align}
which is $O(\frac{1}{n^{1- \eta}})$ for any $\eta> 0$. The first inequality uses \eqref{cheapbound} along with the fact that $\phi(t)$ is bounded above and below by constant multiples of $\sqrt{t}$ for $t \ll 1$ (and absorbs these constants into an adjusted $c'$). The second inequality is an elementary Laplace approximation.

On the remaining domain of integration write
$$
   \int_{\delta_n}^1 \int_0^{\delta_n} | K_n(s,t)|^2 ds dt = \int_{\delta_n}^1 | K_n(\delta_n, t)|^2  dt  
    \int_0^{\delta_n} X_{[n \delta_n]}^2 | K_n(s, \delta_n)|^2 ds,
$$
which, restricted to the event $\mathcal{A}_n \cap \mathcal{B}_n$, is bounded above as in
\begin{align}
\label{secondintegral}
   \int_{\delta_n}^1 \int_0^{\delta_n} | K_n(s,t)|^2 ds dt  & \le c'  \int_{\delta_n}^1 |K_{\epsilon}( \delta_n, t) |^2  dt \,  
   \int_0^{\delta_n} e^{\kappa_n (\sqrt{\delta_n} - \sqrt{s}) } ds  \\
   & \le c' \left( \delta_n^{a-2 \epsilon} \int_{\delta_n}^1 t^{-(a+1+2 \epsilon)} dt \right)  \frac{e^{\kappa_n \sqrt{\delta_n}}}{\kappa_n^2}. \nonumber
\end{align}
The first line employs the same arguments used in the first line of \eqref{firstintegral}, as well as the observation that the proof of Lemma \ref{cheaplemma}  includes the bound $ X_{[n \delta_n]} K_n(s, \delta_n) \le c' e^{\kappa_n \int_s^t \frac{d \tau}{\phi({\tau})}}$ on the event in question.
The second line recalls the definition of $K_{\epsilon}$ from \eqref{boundingkernel} and again that uses the largest contribution comes from of the origin where $\theta(t)$ and $\phi^2(t)$ are bounded in terms of $t$. Bounding the remaining integral gives that \eqref{secondintegral} is controlled by a constant multiple of $  { \delta_n^{-4 \epsilon}  \kappa_n^{-2}} {e^{\kappa_n \sqrt{\delta_n}}}$, which tends to zero like a small negative power of $n$
by choosing  $\epsilon>0$ small enough.
\end{proof}

It remains to go back and establish Lemma \ref{cheaplemma}.

\begin{proof}[Proof of Lemma \ref{cheaplemma}]  The
events $\mathcal{B}_n$ are constructed so that  the inequality 
\begin{equation}
\label{dumbineq}
    \frac{Y_k}{X_k} \le 1 + c' \frac{ \sqrt{{\log_2 n}}}{ \sqrt{ n}  \phi(k/n)}
\end{equation}
holds for all indices $k \le c \log n$  with a fixed constant $c' = c'(c, \beta , a , V)$. Granted this one has that 
$$
   X_{ \lfloor n t \rfloor} K_n(s,t) = \prod_{k =  \lfloor ns \rfloor }^{ \lfloor nt \rfloor -1} \frac{Y_k}{X_k} \le \exp
    \left( c' \sqrt{n \log_2 n} \sum_{k =  \lfloor ns \rfloor }^{ \lfloor nt \rfloor -1}  \frac{1}{n \phi(k/n)}    \right), 
$$
for $s, t \le c \frac{\log n}{n}$ , simply due to $(1 +a ) \le e^a$ for $a\ge 0$. Further estimating above the obvious Riemann sum produces the exponential factor in the advertised \eqref{cheapbound}. An appropriate upper  bound on the  $(X_{\lfloor n t \rfloor})^{-1}$ prefactors will follow in the coarse of establishing 
\eqref{dumbineq}.

To begin, using Proposition \ref{prop:Concentration} yet again we have that
\begin{equation}
\label{round11}
    P \left(  | (X_k, Y_k) - (x_k^o, y_k^o) | \ge c' \sqrt{ \frac{\log_2 n }{n} } \mbox{ for any } k \le c \log n  \right) \le c \log n \times (\log n)^{-\gamma(c')},
\end{equation}
where $\gamma$ can be made large by choice of $c'$. On the other hand we also have that
\begin{equation}
\label{round12}
   |x_k^o - \phi(k/n) | + | y_k^o - \phi(k/n)| \le c'   \frac{1}{\sqrt{n}},
\end{equation}
for $k \le c \log n$. The latter follows from the established $O( 1/\sqrt{ n} )$ closeness of the true and fine minimizers for $k \le c \log n$  (Proposition \ref{prop:FineMin}) coupled with the explicit formulas for the fine minimizers.
Now set 
$$
  \mathcal{B}_n^{(1)} = \Bigl\{    | (X_k, Y_k) - (\phi(k/n), \phi(k/n)) | \le c' \sqrt{ \frac{\log_2 n }{n}}  \mbox{ for } c'' \log_2 n \le k \le c \log n \Bigr\} 
$$
with a $c''$ to be chosen momentarily. We have just explained why $P( \mathcal{B}_n^{(1)}) = 1 - o(1)$, while on 
that event there is the bound
\begin{align}
\label{YX1}
   \frac{Y_k}{X_k}  & \le \frac{\phi(k/n) + c'  \sqrt{ \frac{\log_2 n }{n}}  }{\phi(k/n) - c'  \sqrt{ \frac{\log_2 n }{n}}  }  \\
   &
    \le 1 + 4  c'  \frac{ \sqrt{\log_2 n }}{ \sqrt{n} \phi(k/n)}, \quad  \mbox { granted that }  c'  \frac{ \sqrt{\log_2 n }}{ \sqrt{n} \phi(k/n)} < \frac{1}{2},  \nonumber
\end{align}
which can be guaranteed by taking $c''$ large depending on $c'$. This is \eqref{dumbineq}, after a readjustment of $c'$.

Moving to the range $k \le c'' \log_2 n$ first observe that for such indices the right hand side  of \eqref{dumbineq} can be replaced by a  constant multiple of 
$\sqrt{\frac{\log_2 n}{k}}$. Here we select a small   $\delta > 0$ such that
\begin{equation}
\label{delta_sandwhich}
      \delta  \sqrt{\frac{k}{n}}  \le  \phi(k/n) \le \frac{1}{\delta}   \sqrt{\frac{k}{n}}  \quad \mbox{ for } k \le c'' \log_2 n,
\end{equation}
and define 
\begin{align*}
  \mathcal{B}_n^{(2)} =  & \Bigl\{    | Y_k -  \phi(k/n)) | \le c' \sqrt{ \frac{\log_3 n }{n}}   \mbox{ for } k \le c'' \log_2 n \Bigl\} \\
                                       & \cap \Bigl\{    X_k > \delta^2 \phi(k/n)  \mbox{ for } \log_4 n \le k \le c'' \log_2 n, \,  X_k >  \frac{1}{\sqrt{n \log_3 n}}  
                                         \mbox{ for }  k \le \log_4 n \Bigr\}.                                                                                   
\end{align*}
With the corresponding restrictions on $X_k$ and $Y_k$ in place it holds that
\begin{equation}
\label{YX2}
   \frac{Y_k}{X_k} \le \frac{1}{\delta^2} +  \frac{c'}{\delta} \sqrt{\frac{\log_3 n}{k}}, \quad  \mbox{ for } k \in [ \log_4 n, c'' \log_2 n],
\end{equation}
while
\begin{equation}
\label{YX3}
     \frac{Y_k}{X_k} \le  \left( \frac{1}{\delta} + c'  \right) \log_3 n, \quad \mbox{ for } k \in [ 1, \log_4 n]. 
\end{equation}
The right hand sides of both \eqref{YX2} and \eqref{YX3} are then $O \left( \sqrt{\frac{\log_2 n}{k}} \right)$ as desired.

Leaving aside the verification that $ P(\mathcal{B}_n^{(2)}) = 1 - o(1)$, the claim is that the proof is complete by choosing 
 $ \mathcal{B}_n =  \mathcal{B}_n^{(1)}  \cap \mathcal{B}_n^{(2)}$. The remaining detail is the prefactor $[X_{ \lfloor n t \rfloor}]^{-1}$
multiplying $ \prod_{k =  \lfloor ns \rfloor }^{ \lfloor nt \rfloor -1} \frac{Y_k}{X_k}$ in the definition of the kernel. For $k =  \lfloor n t \rfloor \ge \log_4 n$ 
the definition of $\mathcal{B}_n$ explicitly restricts $[X_k]^{-1}$ to be less than a constant multiple of $[ \phi(k/n) ]^{-1}$, as desired. For 
smaller values of $k$ the bound available from the definition of $\mathcal{B}_n^{(2)}$ is off by a factor of $\sqrt{ \frac{\log_3 n}{k}}$. But this is readily absorbed into the upper bound on $Y_k / X_k$ provided by \eqref{YX3}. 


Returning to the probability of $\mathcal{B}_2^{(n)}$, that
$$
  P \left(  | Y_k -  \phi(k/n)) | \ge c' \sqrt{ \frac{\log_3 n }{n}}   \mbox{ for } k \le c'' \log_2 n \right) = o(1)
$$
holds by the same reasoning behind  \eqref{round11} and \eqref{round12}. 
The twist  is the different type of restriction placed on $X_k$ from below  for $k$ in this range, the lower bound provided by Gaussian concentration now  being cumbersome.

For $k \in [\log_4 n, c'' \log_2 n]$ we require an upper bound on 
$$
   P \left( X_k \le \delta^2 \phi(k/n) \right) \le P \left( X_k \le \delta \sqrt{k/n} \right).
$$
As Lemma \ref{chi_bound} below shows,  this probability  is less than a constant multiple of $P( \zeta \le \epsilon E \zeta )$ in which  $\zeta \sim \chi_{\beta(k+a)}$ and 
$\epsilon = \epsilon(\delta, a , \beta)$  can be taken less than $ 1/2$  granted that $\delta \ll 1$. Here we  use that $k \gg 1$ and that for any $\chi_r$ random variable $ \sqrt{ r - 1/2 } \le E \chi_r \le  \sqrt{r}$ as long as $r \ge 1$. Next bring in the following tail inequality: again with $\chi_r$ denoting a random variable of the indicated law and $r \ge 1$,
\begin{equation}
\label{chiLSI}
      P ( | \chi_r - E \chi_r | \ge \eta E \chi_r) \le 2 e^{-\eta^2 r/2}. 
\end{equation}
This is a consequence of the Logarithmic Sobolev Inequality for measures with strictly log-concave densities (Chapter 5 of \cite{L}) 
along with the  mentioned upper/lower bounds on $E\chi_r$. Combining these remarks the conclusion is that
$$
  P \Bigl(  X_k \le \delta^2 \phi(k/n), \, \mbox{ for some } k \in [ \log_4 n, c'' \log_2 n] \Bigr) \le c''' \sum_{k \ge \log_4 n} e^{-\frac{\beta}{8} k},
$$
which tends  to zero as $n \rightarrow \infty$.

 Finally, for $ k \le \log_4 n$ (with the real problem being when $k$ is order one) the  inequality \eqref{chiLSI}  becomes ineffective,  but we get by with the more elementary  $P ( \chi_r \le \delta) \le \kappa \delta^r $ where $\kappa $ is fixed (for $r \ge1$). This simple estimate yields
  $$
   P \left( X _k \le \frac{1}{\sqrt{ n \log_3 n}},  \mbox{ for some } k \le \log_4 n \right) \le 
   \sum_{k =1}^{\log_4 n} \left( \frac{c'''}{\sqrt{\log_3 n}} 
   \right)^{\beta(k+a)}  \rightarrow 0,
 $$
 after another application of Lemma \ref{chi_bound}.
\end{proof}

\begin{lemma}
\label{chi_bound}
Let $k \le  c \log n$ and denote by $I_k$ the interval $[(k-d) \vee 0, k+d]$. Then, for large enough $n$, 
\begin{equation}
\label{eq:chi_bound}
  P \left ( X_k \le t \, | \,  X_j, Y_j, \, j \in I_k \right) {\one}_{ \{ X_j,  Y_j  \le c \sqrt{ \frac{ \log_2 n}{n}}, \,   j \in I_k  \}} \le  c' P  (\zeta \le c' \sqrt{n} t ), 
\end{equation}
in which $\zeta$ is a $\chi_{\beta (k+a)}$ random variable and $c'$ depends on  $c$ (and $\beta, a, V$).
\end{lemma}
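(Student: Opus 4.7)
The starting point is the explicit conditional density of $X_k$ given the other coordinates. From \eqref{thelaw} this takes the form
\begin{equation*}
    f(x \,|\, q) = \frac{1}{Z_q}\, x^{\beta(k+a) - 1}\, \exp\bigl(-n\beta\, W(x;q)\bigr), \qquad x > 0,
\end{equation*}
where $W(x;q)$ is the restriction of $\tr V(BB^T)$ to $x_k = x$ with the remaining entries of $B$ set to the conditioning values $q$. Since $\tr V(BB^T)$ has range $d$, $W(\cdot;q)$ is a polynomial in $x$ of degree at most $2d$ depending on $q$ only through $q_j$, $j \in I_k$. Write $W(x;q) = a_0 + a_1 x + a_2 x^2 + \sum_{\ell \ge 3} a_\ell x^\ell$ with coefficients $a_\ell = a_\ell(q)$ that one reads off from the lattice-path expansion \eqref{LatticePaths}.

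The next step is to bound the $a_\ell$'s on the conditioning event $E = \{|q_j| \le c\sqrt{\log_2 n/n},\, j \in I_k\}$. Three facts: (i)~the $m=1$ term $g_1 \tr(BB^T) = g_1 \sum_i (x_i^2 + y_{i-1}^2)$ contributes exactly $g_1$ to $a_2$, while $g_m \tr(BB^T)^m$ with $m \ge 2$ produces monomials of total degree $2m$ in the variables, at most $2$ of which equal $x_k$, so contributions to $a_2$ carry at least $2m-2 \ge 2$ factors of the small $q_j$'s, giving $a_2 = g_1 + O(\log_2 n / n)$; (ii)~$a_1$ vanishes for $m = 1$ by parity, and for $m \ge 2$ monomials linear in $x_k$ have $2m - 1 \ge 3$ factors of $q$, so $|a_1| = O((\log_2 n /n)^{3/2})$; (iii)~each $|a_\ell|$ for $3 \le \ell \le 2d$ is bounded by a constant depending only on $V$ and $d$. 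Combining these, on $E$ and for $x \in [0, M/\sqrt n]$ with $M$ a fixed constant,
\begin{equation*}
 n\beta\, \bigl| W(x;q) - a_0 - g_1 x^2 \bigr| = o(1),
\end{equation*}
so that the conditional density equals $(1 + o(1))\, x^{r-1} e^{-n\beta g_1 x^2} / \widetilde Z_q$ on the scale $x \sim 1/\sqrt n$, with $r = \beta(k+a)$.

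The lemma then follows by direct comparison with a chi-$r$ integral. For $\sqrt n\, t \le M$, I would estimate the numerator $\int_0^t x^{r-1} e^{-n\beta W(x;q)}\,dx$ from above and the denominator from below by restricting the latter to a chi-typical window $x \in [c_1/\sqrt n, c_2/\sqrt n]$; on both windows the integrand is controlled by $(1 + o(1))\, x^{r-1} e^{-n\beta g_1 x^2}$, and the change of variables $u = \sqrt{n\beta g_1}\, x$ identifies the ratio with a fraction of chi-$r$ mass evaluated at $\sqrt{2\beta g_1 n}\, t$, which lies below $c'\sqrt n\, t$ once $c'$ dominates $\sqrt{2\beta g_1}$. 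For $\sqrt n\, t > M$ with $M$ sufficiently large the right-hand side of \eqref{eq:chi_bound} already exceeds a fixed positive constant and the bound is trivial. The main obstacle is uniformity in $k$ up to $c \log n$: the ratio of the two normalizations introduces a factor $(a_2/g_1)^{r/2}$ with $r = \beta(k+a)$, so one needs $(a_2-g_1)\,r = o(1)$, which is delivered exactly by the $O(\log_2 n /n)$ estimate in~(i) since then $r(a_2-g_1)/g_1 = O((\log n)(\log_2 n)/n) \to 0$.
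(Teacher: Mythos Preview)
Your approach is the same as the paper's in spirit: isolate the conditional density, approximate the potential by its quadratic part, and compare to a chi integral. The estimates (i)--(iii) on the coefficients are correct and match what the paper uses. However, there is a real gap in the denominator step, and it is precisely the uniformity in $k$ that you flag at the end.

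You propose to lower-bound the normalizer by restricting to a \emph{fixed} window $x\in[c_1/\sqrt n,\,c_2/\sqrt n]$. After the change of variables $u=\sqrt{2n\beta g_1}\,x$ this becomes $\int_{C_1}^{C_2} u^{r-1}e^{-u^2/2}\,du$ with $C_1,C_2$ fixed and $r=\beta(k+a)$. But the $\chi_r$ mass is concentrated near $u\sim\sqrt{r}$, so for $k$ of order $\log n$ this interval carries only an exponentially small (in $r$) fraction of the total mass, and the comparison to $P(\chi_r\le c'\sqrt n\,t)$ fails by an unbounded factor. The same issue kills your ``large $t$'' case: for $r$ of order $\log n$, $P(\chi_r\le c'M)$ is \emph{not} bounded below by a positive constant. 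Your final paragraph on the ratio $(a_2/g_1)^{r/2}$ is correct as far as it goes, but it addresses a different comparison (full normalizers with perturbed quadratic coefficient) and does not rescue the window argument.

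The fix is exactly what the paper does: take the denominator window up to $\log n/\sqrt n$ (or even $M\sqrt{\log n/n}$). Your own estimates (i)--(iii) still give $n\beta|W(x;q)-a_0-g_1x^2|=o(1)$ on that enlarged range (the worst term is $n|a_4|x^4=O((\log n)^2/n)$), and now the rescaled window $[0,\,\mathrm{const}\cdot\log n]$ contains the bulk of the $\chi_r$ distribution for every $r\le\beta(c\log n+a)$, so the denominator is a bounded multiple of the full $\chi_r$ normalizer. The paper packages this slightly differently---absorbing the higher-order terms into an effective quadratic coefficient $c_\pm$ and checking $(c_+/c_-)^{r/2}\le 2$---but the content is the same.
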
 

\begin{proof} First, conditioned on all the other variables $X_k$ has density function proportional to,
$$
   f(x; z) =  x^{\beta (k+a)-1} e^{-n \gamma x^2 + n  \Gamma(x, z)}, 
$$
where $\gamma >0$ is a constant and $\Gamma(x,z)$ is polynomial in $x$ and the other coordinates $z_j \in I_k$. Note that the exponent of any variable in $\Gamma$ is at least two. Hence, with $p(t)$ denoting the left hand side of \eqref{eq:chi_bound} we have that 
 \begin{align*}
 p(t)  & \le   \frac{ \int_0^t f(x;z) dx}{ \int_0^{ \frac{\log n}{\sqrt{n}}} f(x; z) dx }   {\one}_{ \{ x, z \le  c \sqrt{ \frac{ \log_2 n}{n}} \}}
           \le \frac{ \int_0^t x^{\beta (k+a)-1}    e^{-  n \gamma  ( 1 - c'' \frac{\log_2 n}{n} ) x^2 } dx } 
                        {   \int_0^{\frac{\log n}{\sqrt{n}}}   x^{\beta (k+a)-1}    e^{-  n \gamma  ( 1 + c'' \frac{\log^2 n}{n} ) x^2}  dx},
 \end{align*}
 with a constant $c''$ depending on $V$ and $\beta$. 
 
 Next denote $c_{+} = \gamma ( 1 + c'' \frac{\log^2 n}{n} )$ and $c_{-}  = \gamma (1- c'' \frac{\log_2 n}{n} )$. For large enough $n$ there are the bounds $ \gamma/2 \le c_{-}, c_{+} \le 2 \gamma$,  while $(c_{+}/c_{-} )^r \le 2$ for  any exponent $r$ of order $ \log n$.
Changing variables then produces 
 $$
 p(t) \le 2   \frac{ \int_0^{ \sqrt{2 \gamma n } t } x^{\beta (k+a)-1}    e^{-x^2 } dx }
                        {   \int_0^{ \sqrt{\gamma/2} \, {\log n}}   x^{\beta (k+a)-1}    e^{-x^2}  dx}.                                   
$$
The proof is completed by a standard stationary phase calculation which shows that the integral in the denominator can be bounded below (independently of
of $n$, $k \le n$, $\beta$ or $a$) by a constant multiple of $\int_0^{\infty}   x^{\beta (k+a)-1}    e^{-x^2}  dx$, the regular $\chi_{\beta (k+a)}$  normalizer.
\end{proof}

\section*{Appendix}

We include here the derivation that our matrix model $B(X,Y) B(X,Y)^{T}$ (with $(X,Y)$ sampled from the measure $P$) realizes the joint eigenvalue density \eqref{eigdensity}. To simplify notation a bit we take $c \prod_{i < j} |\lambda_i - \lambda_j |^{\beta} \prod_{i=1}^n \lambda_i^{\gamma} e^{-V(\lambda_i)}$ as the target density, with any $\gamma > -1$ and polynomial $V$.

Also, to make a more direct connection with the derivation of the $\beta$-Laguerre ensemble one finds in the literature (in say \cite{DE}) consider first an upper bidiagonal matrix $M$ with coordinates labeled in decreasing order: $M_{i,i} = x_{n-i+1}$ for $i=1, \dots, n$ and 
$M_{i, i+1} = y_{n-i}$ 
for $i =1,\dots, n-1$, with all $x_i$,  $y_i$ positive. Also introduce the tridiagonal coordinates through a Jacobi matrix
$T = T(a, b)$ with $T_{i,i} = a_{n-i+1}$ for $i=1, \dots, n$ and $T_{i, i+1} = T_{i+1, i} = b_{n-i}$ for $i = 1, \dots, n-1$. Here each 
$a_i \in \RR$ and each $b_i \in \RR_+$. We track the calculation from eigenvalue/eigenvector coordinates to $(x,y)$ coordinates via $ Q \Lambda Q^{\dagger} = T = M M^T$. Here $Q$ is the eigenvector matrix, of which we only need the first 
components. These can be chosen to be real positive, and are denoted $(q_1, \dots, q_{n-1})$, noting that $q_n$ is specified by $\sum_{i=1}^n q_i^2 = 1$.

Next, we have that the Jacobians for the maps from $(\lambda, q)$ to $(a,b)$, and then from $(a,b)$ to $(x,y)$
are given by
$$
  J = q_n \frac{\prod_{i=1}^n q_i}{\prod_{i=1}^{n-1} b_i},  \quad  J' = 2^n x_1 \prod_{i=2}^{n} x_i^2,
$$
respectively. See  \cite[eq.~1.156]{FBook} for the former. The latter is derived from the identities
$a_i = x_i^2 + y_{i}^2$ and $b_i = x_{i+1} y_{i}$ (where $y_n = 0$ is understood). 
We will also need the well-known relation,
$$
\prod_{i <j} (\lambda_i -\lambda_j)^2 =  \frac{\prod_{i=1}^{n-1} b_i^{2i} }{\prod_{i=1}^{n} q_i^2},
$$
for which see  \cite[eq.~1.148]{FBook}.

Since we obviously have that $\sum_{i=1}^n V(\lambda_i) = \tr V(M M^T)$, the necessary computation is:
\begin{align*}
 ( \prod_{i=1}^n \lambda_i )^{\gamma}  \prod_{i < j} |\lambda_i - \lambda_j |^{\beta} ( q_n^{-1} \prod_{i=1}^n q_i^{\beta-1} ) \, d q \wedge d \lambda
& =   (  \prod_{i=1}^n x_i^{2 \gamma} ) 
\left(     \frac{\prod_{i=1}^{n-1} (x_{i+1} y_i)^{\beta i}}{\prod_{i=1}^{n-1} q_i^{\beta}} \right) J J' \, dx \wedge dy \\
& = 2^n  \prod_{i=1}^{n} x_i^{2 \gamma + \beta (i-1) +1}                 
 \prod_{i=1}^{n-1} y_i^{\beta i -1}   dx \wedge dy.
\end{align*}
Putting in $\gamma = \frac{\beta}{2}(a+1) - 1$ we recognize the factors in $x_i^{ \beta (a+i) -1}$ and $y_i^{\beta i -1}$ in the claimed bidiagonal matrix density \eqref{thelaw}. Here we have decided to work with $B = S M S^{-1}$ where $S$ is the antidiagonal matrix of alternating signs. This transformation does not effect the joint density of the individual coordinates, and 
one has that the eigenvalues of $B B^T$ and $M M^T$ agree.

\bigskip

\noindent
{\bf{Acknowledgements.}}  B.R.~was supported in part by NSF grant DMS-1340489. It is a pleasure to thank Manjunath Krishnapur and Michel Ledoux for several helpful discussions.


\begin{thebibliography}{99}


\bibitem{BFG}
{Bekerman, F., Figalli, A., Guionnet, A.},
Transport maps for $\beta$-matrix models and universality,
\textit{Comm.~Math.~Phys.} {\bf 338} (2015), 589--619.


\bibitem{BL}
{Bobkov, S.G., Ledoux, M.,}
From Brunn-Minkowski to Brascamp-Lieb and to Logarithmic Sobolev inequalities.
\textit{GAFA} {\bf 10} (2000), 1028 -- 1052.

\bibitem{BEY3}
{Bourgade,~P., Erd\"os,~L., Yau,~H.T.},
Edge universality of $\beta$-ensembles,
\textit{Comm.~Math.~Phys.}  {\bf 332} (2014), 261--353.


\bibitem{Bill}
{Billingsley, P,}
\textit{Convergence of Probability Measures}.
Wiley Series in Probability and Statistics (2nd Edition), 1999.



\bibitem{CD}
{Davis, C.}
     {All convex invariant functions of Hermitian matrices},
   \textit{Arch. Math.}
\textbf{8} (1957), 276--278.


\bibitem{DG}
{Deift, P., Gioev, D.}
Universality at the edge of the spectrum for unitary, orthogonal and symplectic ensembles of random matrices.
\textit{Comm. Pure Appl. Math.} \textbf{60} (2007), no. 6,  867--910.


\bibitem{DGKV}
{Deift, P. Gioev, D., Kriecherbauer, T., Vanlessen, M.},
Universality for orthogonal and symplectic Laguerre-type ensembles,
\textit{J.~ Stat.~ Phys.} \textbf{129} (2007), 949--1053.




\bibitem{Dud}
{Dudley, R.M.}, The sizes of compact subsets of Hilbert space and continuity of Gaussian processes,
\textit{J.~Funct.~Anal.} \textbf{1} 290-330 (1967), 290--330.


\bibitem{DE} 
 { Dumitriu,~I., Edelman,~A.}, 
Matrix models for beta ensembles, \textit{J. Math. Phys.}
{\bf 43} (2002),  5830-5847.


\bibitem{ES}
{ Edelman,~A., Sutton, B.},
 From random matrices
to stochastic operators, \textit{J.~Stat.~Phys.}
\textbf{127} (2007), 1121--1165.

\bibitem{EMP}
{Ercolani, N.M., McLaughlin, K.D. T-R., and Pierce, V.U.}
Random matrices, graphical enumeration, and the continuum limit of Toda lattices.
\textit{Comm. Math. Phys.} \textbf{278} (2008), 31--81.

\bibitem{For}
{Forrester,~P.J.}
Exact results and universal aysmptotics in the Laguerre random matrix ensemble.
\textit{J.~Math.~Phys.} \textbf{35} (1994), 2519--2551.


\bibitem{FBook}
{Forrester, P.J.}.
\textit{Log-Gases and Random Matrices}.
Princeton University Press, Princeton-Oxford, 2010.


\bibitem{KRV}
{Krishnapur, M., Rider B., Vir\'ag, B.},
Universality of the Stochastic Airy Operator,
\textit{Comm. Pure Appl. Math.}  {\bf{69}} (2016), 145--199.




\bibitem{KV2}
{ Kuiljaars, A., Vanlessen, M.},
Universality for eigenvalue correlations at the origin of the spectrum,
\textit{Comm.~Math.~Phys.} {\bf 243} (2003), 163--191. 

\bibitem{L}
Ledoux, M.
\textit{The concentration of measure phenomenon.}
Math.~Surveys and Monograph \textbf{89}  AMS (2001).

\bibitem{RRV}
{Ram\'irez, J., Rider, B., Vir\'ag, B.},
Beta ensembles, stochastic Airy spectrum and a diffusion,
\textit{J. Amer. Math. Soc.} \textbf{24} (2011), 919--944.


\bibitem{RR}
{Ram\'irez, J., Rider, B.} 
Diffusion at the random matrix hard edge,
\textit{Comm.~Math.~Phys.} \textbf{288} (2009),  887--906. 
(Erratum: \textit{Comm.~Math.~Phys.} {\bf{307}} (2011), 561--563.)

\bibitem{RR1}
{Ram\'irez, J., Rider, B.} 
Spiking the random matrix hard edge,
\textit{To appear, PTRF (arXiv:1506:04988).}

\bibitem{Tal}
{Talagrand, M.}
\textit{Upper and lower bounds for stochastic processes.}
Ergebnisse der Mathematik und ihrer Grenzgebiete.
\textbf{60} Springer (2014).

\bibitem{TW1}
{Tracy,~C.,  Widom,~H.} 
Level spacing distributions and the Bessel kernel, 
\textit{Comm.~Math.~Phys.}  {\bf 161} (1994), 289--309. 


\bibitem{TW2}
{Tracy,~C.,  Widom,~H.} 
On orthogonal and symplectic matrix ensembles.
\textit{Comm.~Math.~Phys.} {\bf 177} (1996) 727--754.



\end{thebibliography}
\end{document}